 \newtheorem{DE}{Definition}[section]
\newcommand {\sm} {\setminus}
 \newcommand{\qed}{\relax\ifmmode\hskip2em\Box\else\unskip\nobreak\hfill$\Box$\fi}
\newtheorem{theorem}[DE]{Theorem}
\newtheorem{lemma}[DE]{Lemma}
\theoremstyle{break}\theorembodyfont{\rmfamily}}
\theoremstyle{break}\theorembodyfont{\rmfamily}}
\newcounter{claim}
\newenvironment{proof}[1][]%
	{\noindent {\setcounter{claim}{0}\sc proof --- }{#1}{}}{\qed\vspace{2ex}}
	{\refstepcounter{claim}\vspace{1ex}\noindent {(\it\arabic{claim}) {#1}{}}\it}{\vspace{1ex}}
	{\noindent {}{#1}{}}{ This proves~(\arabic{claim}).\vspace{1ex}}
\begin{document}

 \title{The (theta, wheel)-free graphs\\ Part II: structure theorem}
\author{Marko Radovanovi\'c\thanks{University
    of Belgrade, Faculty of Mathematics, Belgrade, Serbia. Partially
    supported by Serbian Ministry of Education, Science and
    Technological Development project 174033. E-mail:
    markor@matf.bg.ac.rs}~, Nicolas Trotignon\thanks{CNRS, LIP, ENS de
    Lyon. Partially supported by ANR project Stint under reference
    ANR-13-BS02-0007 and by the LABEX MILYON (ANR-10-LABX-0070) of
    Universit\'e de Lyon, within the program ‘‘Investissements
    d'Avenir’’ (ANR-11-IDEX-0007) operated by the French National
    Research Agency (ANR).  Also Universit\'e Lyon~1, universit\'e de
    Lyon. E-mail: nicolas.trotignon@ens-lyon.fr}~, Kristina Vu\v
  skovi\'c\thanks{School of Computing, University of Leeds, and
    Faculty of Computer Science (RAF), Union University, Belgrade,
    Serbia.  Partially supported by EPSRC grants EP/K016423/1 and EP/N0196660/1, and
    Serbian Ministry of Education and Science projects 174033 and
    III44006. E-mail: k.vuskovic@leeds.ac.uk}}

\maketitle

 \begin{abstract}
 A hole in a graph is a chordless cycle of length at least 4. A theta is a graph formed by three paths between the same
 pair of distinct vertices so that the union of any two of the paths induces a hole. A wheel is a graph formed by a hole
 and a node that has at least 3 neighbors in the hole. In this paper we obtain a decomposition theorem for the class
 of graphs that do not contain an induced subgraph isomorphic to a theta or a wheel, i.e.\ the class of (theta, wheel)-free
 graphs. The decomposition theorem uses clique cutsets and 2-joins.
 Clique cutsets are vertex cutsets that work really well in decomposition based algorithms, but are unfortunately not general enough to decompose more complex hereditary graph
 classes.
 A 2-join is an edge cutset that appeared in
 decomposition theorems of several complex classes, such as perfect graphs, even-hole-free graphs and others.
 In these decomposition theorems 2-joins are used together with vertex cutsets that are more general than clique cutsets, such as star cutsets and their generalizations
 (which are much harder to use in algorithms). This is a first example of a decomposition theorem that uses just the combination of
 clique cutsets and 2-joins. This has several consequences. First, we can easily transform our decomposition theorem into a complete structure
 theorem for (theta, wheel)-free graphs, i.e.\ we show how every (theta, wheel)-free graph can be built starting from basic graphs that can be explicitly
 constructed, and  gluing them together by prescribed composition operations; and all graphs built this way are (theta, wheel)-free.
 Such structure theorems are very rare for hereditary graph classes, only a few examples are known.
 Secondly, we obtain an $\mathcal O (n^4m)$-time decomposition based  recognition algorithm for (theta, wheel)-free graphs.
 Finally, in Parts III and IV of this series, we give further applications  of our decomposition theorem.
 \end{abstract}

 \section{Introduction}\label{sec:intro}
In this article, all graphs are finite and simple.

A \emph{prism} is a graph made of three node-disjoint chordless paths
$P_1 = a_1 \dots b_1$, $P_2 = a_2 \dots b_2$, $P_3 = a_3 \dots b_3$ of
length at least 1, such that $a_1a_2a_3$ and $b_1b_2b_3$ are triangles
and no edges exist between the paths except those of the two
triangles.  Such a prism is also referred to as a
$3PC(a_1a_2a_3,b_1b_2b_3)$ or a $3PC(\Delta ,\Delta )$ (3PC stands for
\emph{3-path-configuration}).

A \emph{pyramid} is a graph made of three chordless paths
$P_1 = a \dots b_1$, $P_2 = a \dots b_2$, $P_3 = a \dots b_3$ of
length at least~1, two of which have length at least 2, node-disjoint
except at $a$, and such that $b_1b_2b_3$ is a triangle and no edges
exist between the paths except those of the triangle and the three
edges incident to $a$.  Such a pyramid is also referred to as a
$3PC(b_1b_2b_3,a)$ or a $3PC(\Delta ,\cdot)$.

A \emph{theta} is a graph made of three internally node-disjoint
chordless paths $P_1 = a \dots b$, $P_2 = a \dots b$,
$P_3 = a \dots b$ of length at least~2 and such that no edges exist
between the paths except the three edges incident to $a$ and the three
edges incident to $b$.  Such a theta is also referred to as a
$3PC(a, b)$ or a $3PC(\cdot ,\cdot)$.

A \emph{hole} in a graph is a chordless cycle of length at least~4.  A
\emph{wheel} $W= (H, c)$ is a graph formed by a hole $H$ (called the
\emph{rim}) together with a node $c$ (called the \emph{center}) that
has at least three neighbors in the hole.

A \emph{3-path-configuration} is a graph isomorphic to a prism, a
pyramid or a theta.  Observe that the lengths of the paths in the
definitions of 3-path-configurations are designed so that the union of
any two of the paths induce a hole. A~\emph{Truemper configuration} is a
graph isomorphic to a prism, a pyramid, a theta or a wheel
(see Figure \ref{f:tc}).  Observe
that every Truemper configuration contains a hole.

\begin{figure}
  \begin{center}
    \includegraphics[height=2cm]{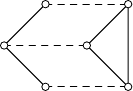}
    \hspace{.2em}
    \includegraphics[height=2cm]{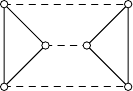}
    \hspace{.2em}
    \includegraphics[height=2cm]{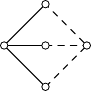}
    \hspace{.2em}
    \includegraphics[height=2cm]{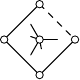}
  \end{center}
  \caption{Pyramid, prism, theta and wheel (dashed lines represent
    paths)\label{f:tc}}
\end{figure}

If $G$ and $H$ are graphs, we say that $G$ \emph{contains} $H$ when
$H$ is isomorphic to an induced subgraph of $G$.  We say that $G$ is
\emph{$H$-free} if it does not contain $H$.  We extend this to classes
of graphs with the obvious meaning (for instance, a graph is (theta,
wheel)-free if it does not contain a theta and does not contain a
wheel).

In this paper we prove a decomposition theorem for (theta, wheel)-free
graphs, from which we obtain a full structure theorem and a polynomial
time recognition algorithm.  This is part of a series of papers that
systematically study the structure of graphs where some Truemper
configurations are excluded. This project is motivated and explained
in more details in the first paper of the series~\cite{twf-p1}.
In Parts III and IV of the series (see \cite{twf-p3,twf-p4}) we give several applications of the
structure theorem.

\subsection*{The main result and the outline of the paper}

A graph is \emph{chordless} if all its cycles are chordless.   By the
following decomposition theorem proved in~\cite{twf-p1}, to prove a
decomposition theorem for (theta, wheel)-free graphs, it suffices to
focus on graphs that contain a pyramid.

\begin{theorem}[\cite{twf-p1}]\label{dt1-p1}
If $G$ is (theta, wheel, pyramid)-free, then $G$ is a line graph of a
triangle-free chordless graph or it has a clique cutset.
\end{theorem}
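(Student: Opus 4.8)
The plan is to assume $G$ is connected (otherwise apply the result to each component, noting that a clique cutset of a component is a clique cutset of $G$) and that $G$ has no clique cutset, and to deduce that $G = L(R)$ for some triangle-free chordless graph $R$. My strategy has three stages: first show that $G$ is both diamond-free and claw-free (this is where the excluded configurations and the absence of a clique cutset are used); then use these two properties to produce a Krausz-type clique partition and read off a triangle-free root graph $R$; and finally use wheel-freeness of $G$ to force $R$ to be chordless.

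For diamond-freeness: suppose $\{a,b,c,d\}$ induces a diamond with $a\sim b$ and $c\not\sim d$, where $c$ and $d$ are each adjacent to both $a$ and $b$. Since $\{a,b\}$ is a clique and not a cutset, $G-\{a,b\}$ contains an induced $c$--$d$ path $P$ of length at least $2$. I would then analyse the attachments of $a$ and $b$ to $P$: taking a shortest such $P$ and using the induced path $P$ together with $a$ (or $b$), one extracts a hole $H$ on which $b$ (resp.\ $a$) has at least the three neighbours $a,c,d$, i.e.\ a wheel $(H,b)$; the residual cases where $a$ and $b$ are both complete to $P$ are handled by exhibiting a clique cutset of the form $\{a,b\}\cup\{x\}$. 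Every outcome contradicts a hypothesis, so $G$ is diamond-free.

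Claw-freeness is the main obstacle. Suppose $v$ is the centre of a claw with independent leaves $a,b,c$. Because $\{v\}$ is not a cutset, $G-v$ is connected, so I can join the leaves by induced paths avoiding $v$: a shortest $a$--$b$ path $Q$ in $G-v$ yields, together with $v$, a hole $H$ through $v$ (after cleaning the attachments of $v$ to $Q$), and then a shortest path from $c$ to $H-v$ produces three internally disjoint paths from $v$ to a common endpoint $z$. The crux is to control the attachments of $c$ (and of the connecting path) to $H$: if $c$ has at least three neighbours on a hole we obtain a wheel, and otherwise the three paths from $v$ meet either at a single vertex, giving a theta $3PC(v,z)$, or at a triangle, giving a pyramid with apex $v$---the key point being that the claw centre can never be absorbed into a prism, since prisms are claw-free. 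Carrying out this case analysis so that one lands in exactly a theta, a pyramid, or a wheel in every configuration is the delicate part.

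Once $G$ is (claw, diamond)-free I would build the clique partition directly: diamond-freeness forces any two distinct maximal cliques to meet in at most one vertex, so the maximal cliques partition $E(G)$, while claw-freeness forces every vertex to lie in at most two maximal cliques. This is exactly a Krausz partition, so $G=L(R)$, where the vertices of $R$ are the maximal cliques (with a pendant added for each vertex of $G$ lying in a single clique). A triangle of $R$ would correspond to three maximal cliques pairwise sharing a vertex, forcing a vertex of $G$ in three cliques, i.e.\ a claw, so $R$ is triangle-free. Finally, if $R$ had a cycle with a chord then, since $R$ is triangle-free, that cycle has length at least $4$ and maps to a hole of $L(R)=G$, while the chord edge is adjacent in $L(R)$ to at least three vertices of this hole, producing a wheel; wheel-freeness of $G$ therefore forces every cycle of $R$ to be chordless, completing the proof that $R$ is a triangle-free chordless graph.
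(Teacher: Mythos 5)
First, a point of order: this paper does not actually prove Theorem~\ref{dt1-p1} --- the statement is imported from Part~I of the series \cite{twf-p1}, and the only traces of its proof visible here are the two auxiliary lemmas the paper also imports, Lemma~\ref{star} (a (theta, wheel)-free graph with a star cutset has a clique cutset) and Lemma~\ref{diamond} (a wheel-free graph with a diamond has a clique cutset). Judged on its own merits, your architecture is the right one, and it matches the known proof's outline: show that a counterexample with no clique cutset is diamond-free and claw-free, recover a Krausz partition so that $G=L(R)$ with $R$ triangle-free, and use wheel-freeness to force $R$ chordless. Your last two stages are correct and standard (the computation that a chord of a cycle of $R$ becomes a wheel center in $L(R)$ is fine). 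The gap is in stage one, and it is a gap of substance rather than bookkeeping: diamond-freeness and claw-freeness are exactly the hard content of the theorem, and your sketches do not survive the degenerate cases.

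The claw argument is the critical failure. Your trichotomy --- the three paths from $v$ meet at a single vertex (theta), meet at a triangle (pyramid), or some vertex gets three neighbours on a hole (wheel) --- omits the one outcome that defeats every purely local argument of this kind: a \emph{cycle with a unique chord}. If the connecting structure is an induced path $T=a\cdots b\cdots c$ in $G\setminus v$ none of whose vertices outside $\{a,b,c\}$ is adjacent to $v$ (and your own construction produces exactly this when the shortest path from $c$ lands on $b$), then $G[V(T)\cup\{v\}]$ is a cycle with unique chord $vb$: it contains no theta, no pyramid, no wheel and no prism, so no contradiction can be extracted from it. The same degeneracy occurs whenever your meeting vertex $z$ lies in $\{a,b,c\}$ or is adjacent to $v$, since a theta $3PC(v,z)$ requires all three paths to have length at least~$2$. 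At that point one must either exhibit a clique cutset (here $\{v,b\}$) --- which does not follow from any shortest-path choice, because longer connections between $a$ and $c$ may avoid it --- or exploit the failure of that cutset to enlarge the structure; this interplay is the actual work of Part~I. A second, related omission: your connecting paths are only required to avoid $v$, not $N(v)$, so their internal vertices may be adjacent to $v$ and the claimed ``three internally disjoint paths from $v$'' need not exist. To force the structure off $N(v)$ one needs $a,b,c$ to lie in a single component of $G\setminus(N[v]\setminus\{a,b,c\})$, and the obstruction to that is a \emph{star} cutset, not a clique cutset --- precisely why Lemma~\ref{star} is needed, an ingredient your proposal never supplies. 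The diamond step has the same disease in milder form: the hole extracted from $P\cup\{a\}$ contains both $c$ and $d$ only when $a$ has no internal neighbour on $P$, and in the residual cases the asserted clique cutset $\{a,b\}\cup\{x\}$ is not justified by minimality of $P$; that statement is Lemma~\ref{diamond}, and it requires a genuinely different argument.
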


In Section \ref{sec:statement}, we define a generalization of pyramids
that we call P-graphs.  The full definition is complex, but
essentially, a P-graph is a graph that can be vertexwise partitioned
into the line graph of a triangle-free chordless graph and a
clique. Clearly, if a  (theta, wheel)-free graph contains a
pyramid, then it contains a P-graph.  We consider such a maximal
P-graph and prove that the rest of the graph attaches to it in a
special way that entails a decomposition.

The decompositions that we use are the clique cutset and the 2-join
(to be defined soon). Our main theorem is the following.

\begin{theorem}\label{main}
  If $G$ is (theta, wheel)-free, then $G$ is a line graph of a
  triangle-free chordless graph or a P-graph, or $G$ has a clique
  cutset or a 2-join.
\end{theorem}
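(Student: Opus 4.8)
The plan is to leverage Theorem~\ref{dt1-p1} by a case analysis on whether $G$ contains a pyramid. If $G$ contains no pyramid, then $G$ is (theta, wheel, pyramid)-free, and Theorem~\ref{dt1-p1} immediately gives that $G$ is a line graph of a triangle-free chordless graph or has a clique cutset, which is one of the desired outcomes. So the entire substance of the proof is the case where $G$ contains a pyramid. Since any pyramid is itself a small P-graph, $G$ then contains at least one P-graph, and I would fix an \emph{inclusion-wise maximal} P-graph $B$ in $G$. The goal is to show that either $B = G$ (so $G$ is a P-graph), or the way $V(G) \setminus V(B)$ attaches to $B$ forces a clique cutset or a 2-join.

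First I would study the \emph{attachments} of vertices and connected components of $G \setminus V(B)$ to $B$. The heart of the argument is a local analysis constraining how an outside vertex $v$ can be adjacent to $B$: because $G$ is (theta, wheel)-free, $v$ cannot have certain neighborhoods in the holes and paths that make up $B$, otherwise $v$ together with part of $B$ would create a theta or a wheel (a center with three rim-neighbors), or $v$ could be used to enlarge $B$ into a strictly larger P-graph, contradicting maximality. I would catalogue the admissible attachment types, and I expect these to be very restrictive — essentially each outside component sees $B$ only through a clique of $B$ or through a structured ``2-join side'' of $B$. This is the step I expect to be the main obstacle: the P-graph is a complicated hybrid object (line graph of a triangle-free chordless graph glued to a clique), so enumerating the possible attachments and ruling each bad one out via an explicit forbidden configuration is where the bulk of the casework and the real difficulty lies.

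Once the attachments are understood, the synthesis is comparatively clean. If some connected component $C$ of $G \setminus V(B)$ attaches to $B$ only through a clique $K$, then $K$ is a clique cutset of $G$ separating $C$ from the rest of $B$ (since $B$, being connected, contains vertices outside $K$), giving the clique-cutset outcome. Otherwise I would argue that the remaining attachment pattern exhibits the two sides of a 2-join: the maximality of $B$ and the rigidity of how the outside attaches should let me partition $V(G)$ into $X_1 \supseteq$ (a suitable part of $B$) and $X_2 \supseteq$ (the outside together with the complementary part of $B$), with all cross edges confined between two cliques, one on each side, and with both sides large enough to meet the size requirements of a 2-join. Verifying the precise 2-join axioms — the complete bipartite adjacency between the two connecting sets and the absence of other cross edges — would rely directly on the attachment classification from the previous step.

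Finally, I would need to check the \emph{degenerate and boundary cases} to make sure no small configuration slips through: for instance when $G \setminus V(B)$ is empty (then $G = B$ is a P-graph, as desired), when an outside component is a single vertex, and when the ``clique'' through which a component attaches is a single vertex or an edge. I would organize the whole argument around the maximal P-graph $B$ so that maximality is invoked in exactly two ways — first to forbid attachments that would grow $B$, and second to guarantee that both sides of the putative 2-join are nonempty and large enough. The expectation is that the (theta, wheel)-freeness does all the forbidding work at the local level, and the global decomposition then follows almost mechanically from the local classification.
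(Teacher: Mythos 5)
Your high-level plan matches the paper's strategy in outline (reduce to the pyramid-containing case via Theorem~\ref{dt1-p1}, fix a maximal P-graph, classify attachments, then extract a clique cutset or 2-join), but two ideas that carry the actual weight of the paper's proof are missing, and one of your choices would make the plan fail. First, \emph{inclusion-wise} maximality of $B$ is the wrong measure. The paper chooses $B$ to maximize the size $k$ of the special clique, and among those, the number of segments. This matters because the contradictions that rule out bad attachments do not produce a P-graph containing $B$: typically one finds a pyramid $\Pi$ \emph{inside} $B$ such that the attaching path $P$ is a crosspath of $\Pi$, whence $G[\Pi\cup P]$ is a P-graph with a strictly larger special clique --- but $G[\Pi\cup P]\not\supseteq B$, so inclusion-wise maximality is not contradicted. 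Your stated mechanism (``maximality forbids attachments that would grow $B$'') only forbids attachments that extend $B$ itself; the dangerous attachments (crosspaths to pyramids inside $B$, vertices complete to a big clique of the line-graph part, etc.) do not grow $B$ and would survive your classification step.

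Second, the paper does not find the 2-join around $B$ at all: it introduces a \emph{strip system} $\mathcal S$, a thickening of $B$ in which each segment of positive length is replaced by a strip, chosen maximal with respect to inclusion, and the whole attachment analysis (Claims~1--5 in Section~\ref{sec:proofM}) is carried out relative to $\mathcal S$. This is what handles the fact that many components of $G\setminus B$ may attach in parallel alongside the same segment and into its end cliques; the strip-system maximality (together with the no-clique-cutset and no-star-cutset hypotheses via Lemmas~\ref{star} and~\ref{hat}) is used to show that vertices complete to a big clique $Q_{K_1}$, $K_1\ne K$, cannot exist (Claim~3), and that each remaining component attaches inside a single strip $H_S$ (Claim~5). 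Only then is the 2-join read off as $(H_S\cup\mathcal C,\, G\setminus(H_S\cup\mathcal C))$, with a correction when $S$ is a claw segment. Without this intermediate object, your synthesis step --- ``the global decomposition then follows almost mechanically'' --- does not go through: grouping the outside components around a raw segment of $B$ does not yield the complete-bipartite cross-edge structure a 2-join requires. So the proposal is a plausible outline, but the two devices it omits (the lexicographic choice of $B$, and the maximal strip system) are precisely what make the attachment analysis provable and the 2-join extractable.
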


Clique cutsets are vertex cutsets that work really well in decomposition based algorithms, but are unfortunately not general enough to decompose more complex hereditary graph
 classes.
 A 2-join is an edge cutset that appeared in
 decomposition theorems of several complex classes, such as perfect graphs \cite{crst}, even-hole-free graphs \cite{cckv:evenhole, dsv:ehf} and others.
 In these decomposition theorems 2-joins are used together with vertex cutsets that are more general than clique cutsets, such as star cutsets and their generalizations
 (which are much harder to use in algorithms). This is the first example of a decomposition theorem that uses just the combination of
 clique cutsets and 2-joins. This has several consequences. First, we can easily transform our decomposition theorem into a complete structure
 theorem for (theta, wheel)-free graphs, i.e.\ we show how every (theta, wheel)-free graph can be built starting from basic graphs that can be explicitly
 constructed, and  gluing them together by prescribed composition operations; and all graphs built this way are (theta, wheel)-free.
 Such structure theorems are very rare for hereditary graph classes, only a few examples are known, such as
 chordal graphs \cite{dirac:chordal}, universally-signable graphs \cite{cckv:universally}, graphs that do not contain a cycle with a unique chord \cite{nicolas.kristina:one},
 claw-free graphs \cite{chudnovsky.seymour:claw-free} and bull-free graphs \cite{chudnovsky:bull-free}
 (for a survey see \cite{kv-survey}).

The second consequence is the following theorem, and the remaining consequences are given in \cite{twf-p3}.

\begin{theorem}\label{th:reco}
  There exists an $O(n^4m)$-time algorithm that decides whether an
  input graph $G$ is (theta, wheel)-free.
\end{theorem}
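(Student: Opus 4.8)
The plan is to turn the structure theorem (Theorem~\ref{main}) into a decomposition-based recognition algorithm. The skeleton is standard for this kind of result: given the input $G$, we build a \emph{decomposition tree} by repeatedly splitting $G$ along clique cutsets and $2$-joins, and at the leaves we test whether the resulting undecomposable graphs are \emph{basic}, i.e.\ line graphs of triangle-free chordless graphs or P-graphs. By Theorem~\ref{main}, if $G$ is (theta, wheel)-free and has neither a clique cutset nor a $2$-join, then $G$ is already basic; so the only way the procedure can fail to reach basic leaves is if some leaf contains a theta or a wheel. The algorithm then outputs ``(theta, wheel)-free'' if and only if every leaf is basic.

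First I would settle the recognition of the basic classes in isolation. A line graph of a triangle-free chordless graph is recognized by running the linear-time line-graph recognition to produce a root graph $R$ (unique up to the small well-known exceptions), and then checking in $O(nm)$ time that $R$ is triangle-free and that every cycle of $R$ is chordless. For P-graphs one uses the explicit description given in Section~\ref{sec:statement}: a P-graph is, up to its distinguished clique part, the line graph of a triangle-free chordless graph, so its recognition reduces to line-graph recognition on the part obtained after removing that clique, together with a check that the attachment is of the prescribed form. Both tests are polynomial and comfortably within the claimed bound.

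The core of the proof is to show that the two decompositions are \emph{class preserving}, so that the equivalence ``$G$ is (theta, wheel)-free $\iff$ all leaves are basic'' actually holds. For clique cutsets this is easy: a theta and a wheel are both $2$-connected and contain no clique cutset of their own (their only cliques are vertices, edges and triangles), so any theta or wheel of $G$ is contained in a single block of a clique-cutset decomposition; hence $G$ is (theta, wheel)-free if and only if both blocks are. For $2$-joins the statement is delicate, because a hole --- and therefore a theta or a wheel --- may cross the $2$-join. Here I would define the blocks of a $2$-join decomposition by replacing each side of the cut by a short \emph{marker path}, chosen of the right length so that a pair of paths crossing the cut induces a hole in $G$ precisely when the corresponding paths together with the marker induce a hole in the block. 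The heart of the argument is then to prove that every theta and every wheel of $G$ maps to a theta or a wheel in one of the two blocks, and conversely that no theta or wheel is created in a block without already witnessing one in $G$; this requires a careful case analysis of how the three paths of a theta, or the rim and center of a wheel, distribute across the two sides of the $2$-join and interact with its special cliques. This class-preservation step for $2$-joins is the main obstacle.

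Finally I would assemble the algorithm and analyze its running time. Clique cutsets can be found and eliminated in $O(nm)$ time by Tarjan's clique-separator decomposition, and a $2$-join (or a certificate that none exists) can be found in polynomial time, e.g.\ in $O(n^2m)$. Because each decomposition step replaces the removed side by a bounded marker, every block is strictly smaller in a suitable measure, so the process terminates and the decomposition tree has polynomially many nodes; bounding this number by $O(n^2)$ and multiplying by the $O(n^2m)$ cost of each $2$-join search yields the overall $O(n^4m)$ bound. Combining termination, the basic-class tests, and the two class-preservation statements proves that the algorithm is correct and runs within the claimed time, which establishes Theorem~\ref{th:reco}; the full details are deferred to \cite{twf-p3}.
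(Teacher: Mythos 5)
Your high-level strategy (decompose along clique cutsets and 2-joins, test the undecomposable leaves for being basic) is the same one underlying the paper, but the paper's actual proof of Theorem~\ref{th:reco} does not rebuild this machinery: it is a reduction to the already-existing decomposition-based algorithm of Part~I (Theorem~7.6 of \cite{twf-p1}), where the decomposition tree, the construction of blocks for 2-joins, the correctness of the whole scheme, and the $O(n^4m)$ accounting are established. The only new ingredient supplied in this paper is Lemma~\ref{l:testBasic}: an $O(n^2m)$ subroutine that recognizes the basic graphs of Theorem~\ref{main} (line graphs of triangle-free chordless graphs, and P-graphs), which is then swapped into the Part~I algorithm in place of its basic-graph test.

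The genuine gap in your proposal is exactly the step you yourself call ``the main obstacle'': the class-preservation statement for 2-joins. You correctly observe that the blocks must be built with marker paths of appropriate length and that one must prove $G$ is (theta, wheel)-free if and only if both blocks are, but you never carry out this argument --- and yet your final paragraph invokes ``the two class-preservation statements'' as though they had been established. This is substantial mathematical content, not a routine verification: one must choose the right kind of 2-join (the paper's series works with specific \emph{consistent} 2-joins, cf.\ Section~\ref{sec:reco}), define blocks so that neither direction of the equivalence fails, analyze how the paths of a theta or the rim and center of a wheel can cross the cut, and bound the size of the decomposition tree; in the series this occupies Sections~6 and~7 of \cite{twf-p1}. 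Without it, the correctness of your algorithm is unproven. Two further inaccuracies: you defer the missing details to \cite{twf-p3} (Part~III, which concerns cliques, stable sets and coloring), whereas the correct source is \cite{twf-p1}; and your P-graph test is also underspecified --- the paper's Lemma~\ref{l:testBasic} must locate the special clique $K$ via the centers of claws, recover the skeleton $R$ by line-graph recognition on the graph minus $K$, assign labels to pendant edges, and then verify all eleven conditions in the definition of a $k$-skeleton within $O(n^2m)$; ``a check that the attachment is of the prescribed form'' conceals all of this work.
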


In Section~\ref{sec:statement}, we give all the definitions needed in
the statement of Theorem~\ref{main}.  In particular, we define
P-graphs and 2-joins. In Section~\ref{sec:conn}, we study skeletons (the
skeleton is the root-graph of the line graph part of a P-graph). In
Section~\ref{sec:Pgraphs}, we study the properties of P-graphs. In
Section~\ref{sec:attach}, we study attachments to P-graphs in (theta,
wheel)-free graphs. In Section~\ref{sec:proofM}, we prove
Theorem~\ref{main}. In Section~\ref{sec:reco}, we prove
Theorem~\ref{th:reco} and describe how a structure theorem is derived from our decomposition theorem.

\subsection*{Terminology and notations}

A \emph{clique} in a graph is a (possibly empty) set of pairwise adjacent vertices. We
say that a clique is {\it big} if it is of size at least 3. A clique
of size~3 is also referred to as a {\em triangle}, and is denoted by
$\Delta$.  A {\it diamond} is a graph obtained from a clique of size 4
by deleting an edge. A {\it claw} is a graph induced by nodes
$u,v_1,v_2,v_3$ and edges $uv_1,uv_2,uv_3$.

A {\em path} $P$ is a sequence of distinct vertices
$p_1p_2\ldots p_k$, $k\geq 1$, such that $p_ip_{i+1}$ is an edge for
all $1\leq i <k$.  Edges $p_ip_{i+1}$, for $1\leq i <k$, are called
the {\em edges of $P$}.  Vertices $p_1$ and $p_k$ are the {\em ends}
of $P$.  A cycle $C$ is a sequence of vertices $p_1p_2\ldots p_kp_1$,
$k \geq 3$, such that $p_1\ldots p_k$ is a path and $p_1p_k$ is an
edge.  Edges $p_ip_{i+1}$, for $1\leq i <k$, and edge $p_1p_k$ are
called the {\em edges of $C$}.  Let $Q$ be a path or a cycle.  The
vertex set of $Q$ is denoted by $V(Q)$.  The {\em length} of $Q$ is
the number of its edges.  An edge $e=uv$ is a {\em chord} of $Q$ if
$u,v\in V(Q)$, but $uv$ is not an edge of $Q$. A path or a cycle $Q$
in a graph $G$ is {\em chordless} if no edge of $G$ is a chord of $Q$.

Let $A$ and $B$ be two disjoint node sets such that no node of $A$ is
adjacent to a node of $B$. A path $P=p_1 \ldots p_k$ {\em connects
  $A$ and $B$} if either $k=1$ and $p_1$ has neighbors in both $A$ and
$B$, or $k>1$ and one of the two endnodes of $P$ is adjacent to at
least one node in $A$ and the other endnode is adjacent to at least
one node in $B$. The path $P$ is a {\em direct connection between $A$
  and $B$} if in $G[V(P) \cup A \cup B]$ no path connecting $A$ and
$B$ is shorter than $P$.  The direct connection $P$ is said to be {\em
  from $A$ to $B$} if $p_1$ is adjacent to a node of $A$ and $p_k$ is
adjacent to a node of $B$.

Let $G$ be a graph.  For $x\in V(G)$, $N(x)$ is the set of all
neighbors of $x$ in $G$, and $N[x]=N(x) \cup \{ x\}$.  Let $H$ and $C$
be vertex-disjoint induced subgraphs of $G$.  The {\em attachment of
  $C$ over $H$}, denoted by $N_H(C)$, is the set of all vertices of
$H$ that have at least one neighbor in $C$.  When $C$ consists of a
single vertex $x$, we denote the attachment of $C$ over $H$ by
$N_H(x)$, and we say that it is an {\em attachment of $x$ over
  $H$}. Note that $N_H(x)=N(x)\cap V(H)$.
  For $S\subseteq V(G)$, $G[S]$ denotes the subgraph of $G$ induced by $S$.

When clear from the context, we will sometimes write $G$ instead of $V(G)$.

\section{Statement of the decomposition theorem}
\label{sec:statement}

We start by defining the cutsets used in the decomposition theorem.
In a graph $G$, a subset $S$ of nodes and edges is a {\em cutset} if
its removal yields a disconnected graph.
A node cutset $S$ is a {\em clique cutset} if $S$ is a
clique.
Note that every disconnected
graph has a clique cutset: the empty set.

For a graph $G$ and disjoint sets $A,B\subseteq V(G)$, we say that a node cutset $S$ of $G$ {\it separates $A$ and $B$} if $S\subseteq V(G)\setminus(A\cup B)$ and no vertex of $A$ is in the same connected component of $G\setminus S$ as some vertex of $B$.

An {\em almost 2-join} in a graph $G$ is a pair $(X_1,X_2)$ that is a
partition of $V(G)$, and such that:

\begin{itemize}
\item For $i=1,2$, $X_i$ contains disjoint nonempty sets $A_i$ and
  $B_i$, such that every node of $A_1$ is adjacent to every node
  of $A_2$, every node of $B_1$ is adjacent to every node of
  $B_2$, and there are no other adjacencies between $X_1$ and $X_2$.
\item For $i=1,2$, $|X_i|\geq 3$.
\end{itemize}

An almost 2-join $(X_1, X_2)$ is a \emph{2-join} when for $i\in\{1,2\}$, $X_i$
contains at least one path from $A_i$ to $B_i$, and if $|A_i|=|B_i|=1$
then $G[X_i]$ is not a chordless path.

We say that $(X_1,X_2,A_1,A_2,B_1,B_2)$ is a {\em split} of this 2-join, and the sets $A_1,A_2,B_1,B_2$ are the {\em special sets} of this 2-join.

A {\em star cutset} in a graph is a node cutset $S$ that contains a node (called a {\em center})
adjacent to all other nodes of $S$. Note that a nonempty clique cutset is a star cutset.

\begin{lemma}[\cite{twf-p1}]\label{star}
If $G$ is a  (theta, wheel)-free graph that has a star cutset, then $G$ has a clique cutset.
\end{lemma}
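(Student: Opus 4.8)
The plan is to start from a star cutset and shrink it into a clique cutset, using the absence of thetas and wheels to forbid any nonadjacency inside the cutset. Let $S$ be a star cutset of $G$ with center $c$, chosen so that $|S|$ is minimum among all star cutsets of $G$, and write $S' = S \setminus \{c\}$, so that $S' \subseteq N(c)$. Fix a component $A$ of $G \setminus S$ and set $B = V(G) \setminus (S \cup A)$; then $A$ and $B$ are nonempty and there are no edges between them. The first step is a cleaning step: I claim every vertex $s \in S'$ has a neighbor in $A$ and a neighbor in $B$. Indeed, if some $s \in S'$ had no neighbor in $A$, then $A$ would have no neighbor outside $S \setminus \{s\}$, so $S \setminus \{s\}$ would still be a star cutset with center $c$ separating $A$ from $B \cup \{s\}$, contradicting the minimality of $|S|$; the argument for $B$ is symmetric.

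If $S'$ is a clique, then $S = \{c\} \cup S'$ is a clique, since $c$ is complete to $S'$, and $S$ is the desired clique cutset. So I would assume $S'$ is not a clique and pick nonadjacent $u, v \in S'$. The heart of the proof is then to produce a forbidden configuration. Using that $A$ is connected and that $u$ and $v$ each have a neighbor in $A$, let $Q_A$ be a shortest path from $u$ to $v$ in $G[A \cup \{u,v\}]$ (a direct connection), and define $Q_B$ analogously inside $B$. Each $Q_i$ is chordless, has its interior contained in $A$, respectively $B$, and has length at least $2$ because $uv \notin E(G)$. Since there are no edges between $A$ and $B$ and $u,v$ are nonadjacent, $H = Q_A \cup Q_B$ is a hole.

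It then remains to examine how $c$ attaches to $H$. As $c$ is adjacent to both $u$ and $v$, it has at least two neighbors on $H$. If $c$ has a third neighbor on $H$, then $(H, c)$ is a wheel, a contradiction. Otherwise $N(c) \cap V(H) = \{u,v\}$, and then $Q_A$, $Q_B$ and the path $u\,c\,v$ form three internally disjoint chordless paths of length at least $2$ between $u$ and $v$, with no edges between them other than at $u$ and $v$; this is a theta, again a contradiction. Hence $S'$ is a clique and the lemma follows.

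I expect the main obstacle to be the cleaning step together with the verification that $H$ is genuinely chordless: one must check that neither $u$ nor $v$ sends a chord into the interior of $Q_A$ or $Q_B$, and that the two interiors are anticomplete. This is exactly what the shortest-path (direct connection) choice and the absence of $A$--$B$ edges guarantee. Once $H$ is known to be a hole, the wheel/theta dichotomy applied to the attachment $N(c) \cap V(H)$ of the center $c$ closes the argument immediately.
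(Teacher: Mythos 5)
Your overall strategy is the natural one for this lemma (which this paper does not actually prove itself --- it is imported from Part~I): take a star cutset $S$ of minimum size with center $c$, show that each vertex of $S' = S\setminus\{c\}$ attaches to both sides of the cutset, and then, assuming two vertices $u,v\in S'$ are nonadjacent, build a hole $H$ through $u$ and $v$ avoiding $S$, so that $c$ together with $H$ yields a forbidden configuration: a wheel if $c$ has at least three neighbors on $H$, and a theta formed by $Q_A$, $Q_B$ and the path $u\,c\,v$ if $N(c)\cap V(H)=\{u,v\}$. The endgame is airtight: $c\notin V(H)$ since $V(H)\subseteq A\cup B\cup\{u,v\}$, the attachment of $c$ to $H$ contains $\{u,v\}$, both $Q_A$ and $Q_B$ are induced (shortest) paths of length at least $2$ because $uv\notin E(G)$, and there are no $A$--$B$ edges, so $H$ is indeed a hole and the dichotomy is exhaustive.

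There is, however, one genuine gap: $B=V(G)\setminus(S\cup A)$ is a \emph{union} of components of $G\setminus S$ and need not be connected, so the existence of $Q_B$ does not follow from your cleaning claim. Knowing that $u$ has a neighbor in $B$ and that $v$ has a neighbor in $B$ does not produce a $u$--$v$ path with interior in $B$: those neighbors could a priori lie in different components of $B$, in which case $G[B\cup\{u,v\}]$ is disconnected and the ``shortest path'' $Q_B$ is not defined. The repair is cheap and uses exactly your minimality argument, applied per component rather than to $B$ as a whole: if some $s\in S'$ has no neighbor in a component $C$ of $G\setminus S$, then all neighbors of $C$ lie in $S\setminus\{s\}$, so $S\setminus\{s\}$ is still a cutset (it separates $C$ from everything else) and is still a star cutset with center $c$, contradicting the minimality of $|S|$. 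Hence every vertex of $S'$ has a neighbor in \emph{every} component of $G\setminus S$; now fix a single component $B_1\neq A$ and route $Q_B$ through $B_1$. With this one strengthening of the cleaning step, your proof is complete.
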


We now define the basic graphs.  A graph $G$ is {\em chordless} if no
cycle of $G$ has a chord, and it is {\em sparse} if for every edge
$e=uv$, at least one of $u$ or $v$ has degree at most~2.  Clearly all
sparse graphs are chordless.

An edge of a graph is {\em pendant} if at least one of its endnodes
has degree~1.  A \emph{branch vertex} in a graph is a vertex of degree
at least~3.  A {\em branch} in a graph $G$ is a path of length at
least~1 whose internal vertices are of degree 2 in $G$ and whose
endnodes are both branch vertices.  A {\em limb} in a graph $G$ is a
path of length at least~1 whose internal vertices are of degree 2 in
$G$ and whose one endnode has degree at least 3 and the other one has
degree~1. Two distinct branches are {\em parallel} if they have the
same endnodes.  Two distinct limbs are {\em parallel} if they share
the same vertex of degree at least 3.

Cut vertices of a graph $R$ that are also branch vertices are called
the {\em attaching vertices} of $R$.  Let $x$ be an attaching vertex
of a graph $R$, and let $C_1, \ldots ,C_t$ be the connected components
of $R\setminus x$ that together with $x$
are not limbs of $R$ (possibly, $t=0$, when all
connected components of $R\setminus x$ together with $x$ are limbs).  If $x$ is the end
of at least two parallel limbs of $R$, let $C_{t+1}$ be the subgraph of $R$ formed by
all the limbs of $R$ with endnode $x$.  The graphs
$R[V(C_i)\cup \{ x\} ]$ (for $i=1, \ldots, t$, if $t\neq 0$) and the graph $C_{t+1}$
(if it exists) are the \emph{$x$-petals} of $R$.

For any integer $k\geq 1$, a {\em $k$-skeleton}  is a graph $R$ such that (see Figures \ref{f:1-skeleton}, \ref{f:2-skeleton} and \ref{f:5-skeleton} for examples of $k$-skeletons for $k=1,2,5$):

\begin{enumerate}
\item\label{i:1} $R$ is connected, triangle-free, chordless and contains at least
  three pendant edges (in particular, $R$ is not a path).

 \item\label{i:2} $R$ has no parallel branches (but it may contains
   parallel limbs).

\item\label{i:3} For every cut vertex $u$ of $R$, every component of
  $R\setminus u$ has a vertex of degree 1 in $R$.

\item\label{i:4} For every vertex cutset $S=\{a, b\}$ of $R$ and for
  every component $C$ of $R\setminus S$, either $R[C\cup S]$ is a
  chordless path from $a$ to $b$, or $C$ contains at least one vertex
  of degree 1 in $R$.

\item\label{i:5} For every edge $e$ of a cycle of $R$, at least one of the
  endnodes of $e$ is of degree 2 in $R$.

\item\label{i:6} Each pendant edge of $R$ is given one label, that is an integer
  from $\{1, \dots, k\}$.

\item\label{i:7} Each label from $\{ 1, \ldots ,k\}$ is given at least once (as a
  label), and some label is used at least twice.

\item\label{i:8} If some pendant edge whose one endnode is of degree at least 3
  receives label $i$, then no other pendant edge receives label $i$.

\item\label{i:9}
If $R$ has no branches then $k=1$, and otherwise
  if two limbs of $R$ are parallel, then
  their pendant edges receive different labels and at least one of these labels is used more than once.

\item\label{i:10} If $k>1$ then for every attaching vertex $x$ and for
  every $x$-petal $H$ of $R$, there are at least two distinct labels
  that are used in $H$.  Moreover, if ${H}'$ is a union of at least one but not all  $x$-petals,
  then there is a label $i$ such that both ${H}'$ and $(R\setminus{H}')\cup\{x\}$ have
  pendant edges with label~$i$.

\item\label{i:11} If $k=2$, then both labels are used at least twice.
\end{enumerate}

Note that if  $R$ is a $k$-skeleton, then it edgewise
partitions into its branches and its limbs. To prove this, let $e$ be an edge of $R$ and $P=u\ldots v$, where $\deg(u)\geq\deg(v)$, the maximal path of $R$ that contains $e$ and whose internal vertices are of degree 2. If $\deg(u)=\deg(v)=1$, then $R$ is the chordless path induced by $V(P)$, which contradicts \ref{i:1}. If $\deg(v)=2$, then, by the maximality of $P$, $uv$ is an edge of $R$. Now, if $\deg(u)=2$, then $R$ is the chordless cycle induced by $V(P)$, which contradicts \ref{i:1}; if $\deg(u)\geq 3$, then  $u$ is a cut vertex of $R$ that contradicts \ref{i:3}. So, $\deg(u),\deg(v)\geq 3$ and $P$ is a branch of $R$, or $\deg(u)\geq 2$ and $\deg(v)=1$ in which case $\deg (u)\geq 3$ (by the maximality of $P$) and $P$ is a limb of $R$.

Also, there is a trivial
one-to-one correspondence between the pendant edges of $R$ and the
limbs of $R$: any pendant edge belongs to a unique limb, and
conversely any limb contains a unique pendant edge.

If $R$ is a graph, then the {\em line graph} of $R$, denoted by $L(R)$, is the graph whose nodes are
the edges of $R$ and such that two nodes of $L(R)$ are adjacent in $L(R)$ if and only if the corresponding
edges are adjacent in $R$.

A {\em P-graph}  is any graph $B$ that can be
constructed as follows (see Figures \ref{f:1-skeleton}, \ref{f:2-skeleton} and \ref{f:5-skeleton} for examples of P-graphs):

\begin{itemize}
\item
Pick an integer $k\geq 1$ and a $k$-skeleton $R$.
\item
Build $L(R)$, the line graph of $R$. The vertices of $L(R)$ that
correspond to pendant edges of $R$ are called {\em pendant vertices}
of $L(R)$, and they receive the same label as their corresponding
pendant edges in $R$.
\item Build a clique $K$ with vertex set $\{ v_1, \ldots ,v_k\}$,
  disjoint from $L(R)$.
\item $B$ is now constructed from $L(R)$ and $K$ by adding edges
  between $v_i$ and all pendant vertices of $L(R)$ that have label
  $i$, for $i=1, \ldots ,k$.
\end{itemize}

We say that $K$ is the {\em special clique}
of $B$ and $R$ is the {\em skeleton} of $B$.

The next lemma, that is proved in Part I, allows us to focus on (theta, wheel, diamond)-free
graphs in the remainder of the paper.

\begin{lemma}[\cite{twf-p1}]\label{diamond}
  If $G$ is a wheel-free graph that contains a diamond, then $G$ has a
  clique cutset.
\end{lemma}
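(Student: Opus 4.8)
The plan is to argue by contradiction: suppose $G$ is wheel-free, contains a diamond, and has no clique cutset, and derive a contradiction by exhibiting a wheel. Throughout I will use the following elementary principle repeatedly: if $H$ is a hole and $v\notin V(H)$ has at least three neighbors in $H$, then $(H,v)$ is a wheel, which is forbidden. Fix a diamond with central edge $ab$ and tips $c,d$, so that $ab,ac,ad,bc,bd$ are edges while $cd$ is not; in particular $\{a,b\}$ is a clique and $c,d$ are non-adjacent with $a,b\in N(c)\cap N(d)$.

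First I record the basic obstruction. Since $\{a,b\}$ is a clique and $G$ has no clique cutset, $c$ and $d$ lie in the same component of $G\setminus\{a,b\}$, so there is an induced path $P=p_0\ldots p_k$ from $c=p_0$ to $d=p_k$ in $G\setminus\{a,b\}$; I choose $P$ to be a direct connection between $\{c\}$ and $\{d\}$ in $G\setminus\{a,b\}$, so that it is as short as possible. As $c\not\sim d$ we have $k\ge 2$. If $a$ had no neighbor in the interior of $P$, then $H:=a\,p_0\,p_1\cdots p_k\,a$ would be a hole (it is chordless because $P$ is induced and $a$ sees only the ends of $P$, and it has length at least $4$), and $b$, being adjacent to $a,c,d$, would have three neighbors on $H$, giving a wheel. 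Hence $a$ has an interior neighbor on $P$, and symmetrically so does $b$.

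The core of the argument is the analysis of how $N(a)\cap V(P)$ and $N(b)\cap V(P)$ are positioned along $P$. Both sets contain $p_0$ and $p_k$ and at least one interior vertex. The idea is that two consecutive neighbors of $a$ on $P$ lying at path-distance at least $2$ produce a hole $a\,p_s\,p_{s+1}\cdots p_t\,a$; using $a\sim b$ and the forbidden-wheel principle one then controls the neighbors of $b$ on that hole, and symmetrically. Combining these constraints with the minimality of $P$ (a shortcut through a common neighbor, or a shorter induced $c$--$d$ path, would contradict the choice of $P$) and with the adjacency $a\sim b$, one forces a configuration in which one of $a$ or $b$ has three neighbors on an induced cycle through $P$, i.e.\ a wheel. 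This contradiction shows that $G$ must have a clique cutset.

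I expect this last step to be the main difficulty. The delicate point is that a vertex having three neighbors on an induced path does \emph{not} by itself create a wheel: consecutive attachments yield only triangles and diamonds (fans), not holes. The work therefore lies in choosing $P$ extremally and in exploiting the edge $ab$ to guarantee that, whenever such fans appear, one of $a,b$ still sees a genuine hole in at least three vertices. Getting the right extremal choice of $P$ (a direct connection, possibly with a tie-break minimizing the attachments of $a$ and $b$) so that every case collapses to the forbidden-wheel principle is the crux of the proof.
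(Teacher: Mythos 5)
You should first note that this paper does not actually prove Lemma~\ref{diamond}: it is imported from Part~I (\cite{twf-p1}), so there is no internal proof to compare against, and your attempt must stand on its own. It does not.

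Your opening moves are correct: $\{a,b\}$ is not a cutset, so a shortest induced $c$--$d$ path $P=p_0\ldots p_k$ exists in $G\setminus\{a,b\}$, and if $a$ (or $b$) had no interior neighbor on $P$ then the other vertex would be the center of a wheel. But everything after that is a plan, not a proof, and the plan as stated cannot be completed. The obstruction is exactly the one you half-acknowledge and then defer. Because you delete only $\{a,b\}$, the interior of $P$ may contain common neighbors of $a$ and $b$, and then $a$ and $b$ can attach to $P$ as fans; in the extreme case both $a$ and $b$ are complete to $V(P)$. In that configuration $G[\{a,b\}\cup V(P)]$ contains no hole whatsoever (every induced cycle in it is a triangle), so no ``induced cycle through $P$'' with three neighbors of $a$ or $b$ exists, and no minimality or tie-breaking in the choice of $P$ excludes this situation. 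So the claimed forcing is simply false for your choice of $P$; the contradiction, when it exists, lives outside $\{a,b\}\cup V(P)$. The repair is not a cleverer extremal choice among $c$--$d$ paths in $G\setminus\{a,b\}$, but a larger excluded set: take $P$ in $G$ minus $W=\{a,b\}\cup\bigl((N(a)\cap N(b))\setminus\{c,d\}\bigr)$. Then no interior vertex of $P$ sees both $a$ and $b$, fans are impossible, and the wheel argument closes: choosing $j\geq 1$ minimal with $p_j$ adjacent to $a$ or $b$, say to $a$, the vertex $b$ has no neighbor among $p_1,\ldots,p_j$, so $b$ together with $p_0,\ldots,p_t$ (where $p_t$ is the first neighbor of $b$ of positive index, $t>j$) induces a hole on which $a$ has the three neighbors $b$, $p_0$, $p_j$ --- a wheel. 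The conclusion of this stage is that $W$ separates $c$ from $d$.

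This exposes the second, deeper gap in your architecture: establishing that $W$ is a cutset is not a contradiction, because $W$ is in general not a clique (it may contain non-adjacent common neighbors of $a$ and $b$), while the lemma demands a \emph{clique} cutset. Your outline --- ``assume no clique cutset, analyze one path, find a wheel'' --- uses the hypothesis only once and has no mechanism to produce the clique. A second stage is needed: let $W'\subseteq W$ be inclusion-wise minimal separating $c$ from $d$; then $a,b\in W'$ (each is adjacent to both $c$ and $d$), every vertex of $W'$ has neighbors in both sides $C_c$ and $C_d$, and if $s_1,s_2\in W'\cap N(a)\cap N(b)$ were non-adjacent, two induced $s_1s_2$-paths $Q_c$ and $Q_d$ with interiors in $C_c$ and $C_d$ would form a hole on which $a$ and $b$ each have exactly the neighbors $s_1,s_2$ (more would be a wheel); but then $b$ has the three neighbors $s_1,s_2,a$ on the hole $Q_c\cup\{a\}$, a wheel after all. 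Hence $W'$ is a clique cutset. Note that this last step is where the diamond is genuinely used: for a general star cutset one would only obtain a theta in this situation (which is why Lemma~\ref{star} needs theta-freeness), whereas here the second vertex of the edge $ab$ converts that theta into a wheel, so wheel-freeness alone suffices. Both the enlargement of the separator and this minimal-separator argument are absent from your proposal, and they are the actual content of the lemma.
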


Observe that P-graphs are generalizations of pyramids (this
is why we call them P-graphs).  Let us explain this. A pyramid is {\em
  long} if all of its paths are of length greater than~1.  Note that
in a wheel-free graph all pyramids are long.  Every long pyramid
$\Pi =3PC(x_1x_2x_3, y)$ is a P-graph, where $K=\{ y\}$ and $R$ is a
tree that is obtained from a claw by subdividing each edge at least
once and giving all pendant edges label 1 (see Figure \ref{f:1-skeleton}).  It can be checked that
a pyramid whose one path is of length~1 (and that is therefore a
wheel) is not a P-graph. This is a consequence of Lemma~\ref{l3} to be
proved soon, but let us sketch a direct proof: the apex of the pyramid
is the center of a claw, so it must be in the special clique, which
therefore has size 1 or 2.  It follows that the skeleton must contain
two pendant edges with the same label, and one of them contains a
vertex of degree~3, a contradiction to condition~\ref{i:8}.

\begin{lemma}\label{pyrBas}
  A long pyramid is a P-graph.
\end{lemma}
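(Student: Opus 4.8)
The plan is to realize the given long pyramid directly as a P-graph whose skeleton is a subdivided claw, following the sketch that precedes the statement. Write the long pyramid as $\Pi = 3PC(b_1b_2b_3, a)$, with chordless paths $P_i = a \dots b_i$ (for $i = 1, 2, 3$), each of length $\ell_i \geq 2$, meeting only at $a$, with $b_1b_2b_3$ a triangle and no other edges. I would construct the candidate skeleton $R$ as the tree obtained from a claw with center $c$ by replacing its three edges with internally disjoint paths $Q_1, Q_2, Q_3$ from $c$ to three leaves, where $Q_i$ has length $\ell_i$. Set $k = 1$, give all three pendant edges (the edges of $R$ incident to the leaves) the label $1$, and take the special clique to be $K = \{v_1\}$.

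The first task is to check that $R$, so labelled, is a $1$-skeleton, i.e.\ that it satisfies conditions (\ref{i:1})--(\ref{i:11}). Since $R$ is a tree with a single vertex of degree $3$ (namely $c$) and three leaves, it is connected, acyclic (hence triangle-free and chordless), has exactly three pendant edges, and is not a path, giving (\ref{i:1}); having no two branch vertices it has no branches at all, so (\ref{i:2}) and the ``no branches'' clause of (\ref{i:9}) hold with $k=1$; and (\ref{i:5}) is vacuous as $R$ has no cycle. Conditions (\ref{i:6}) and (\ref{i:7}) are immediate from the labelling, while (\ref{i:10}) and (\ref{i:11}) are vacuous because $k=1$. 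The verifications of (\ref{i:3}) and (\ref{i:4}) are routine case analyses: deleting any one vertex, or any two vertices, of a subdivided claw leaves at most one component that fails to contain a leaf (a degree-$1$ vertex of $R$), and that exceptional component, together with the deleted vertices, always induces a chordless path between them.

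The key point, and the place where the hypothesis that $\Pi$ is \emph{long} is used, is condition (\ref{i:8}). Because each $\ell_i \geq 2$, every leaf of $R$ has its unique neighbor of degree $2$, so no pendant edge of $R$ has an endnode of degree at least $3$; hence (\ref{i:8}) holds vacuously even though all three pendant edges share the label $1$. This is exactly the obstruction flagged in the discussion before the lemma: had some $\ell_i$ equalled $1$, the corresponding pendant edge would be incident to the degree-$3$ vertex $c$ while another pendant edge also carried label $1$, violating (\ref{i:8}).

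It remains to identify the P-graph $B$ built from $R$ and $K = \{v_1\}$ with $\Pi$. In $L(R)$ the three edges of $R$ incident to $c$ are pairwise adjacent and form a triangle, and the edges along each path $Q_i$ form a path in $L(R)$ attached to this triangle at one corner; the far end of this path is the pendant vertex coming from the leaf edge of $Q_i$. Adding $v_1$ adjacent to the three pendant vertices then reproduces precisely the incidence pattern of $\Pi$: the triangle of $L(R)$ maps to $\{b_1, b_2, b_3\}$, the path through the edges of $Q_i$ maps to the interior of $P_i$, and $v_1$ maps to the apex $a$. Checking that $L(R)$ contributes no edges between distinct paths other than through the triangle (two edges of $R$ are adjacent only when they share an endpoint) confirms that this map is an isomorphism, and that each resulting path $a \dots b_i$ has length $\ell_i \geq 2$, so $B \cong \Pi$. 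The main obstacle is simply the careful bookkeeping in the skeleton checks (\ref{i:3}), (\ref{i:4}) and (\ref{i:8}); everything else is a direct translation through the line-graph construction.
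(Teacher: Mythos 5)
Your proposal is correct and takes essentially the same approach as the paper: the paper's own justification (given in the paragraph preceding the lemma) is exactly this construction, with $K=\{y\}$ and the skeleton $R$ a claw whose edges are each subdivided at least once, all pendant edges receiving label~1. The only difference is that the paper leaves the verification of the skeleton conditions implicit, whereas you spell out the routine checks, including the key point that longness of the pyramid is what makes condition~\ref{i:8} hold.
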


In fact, every P-graph contains a long pyramid.  Formally we do not
need this simple fact, we therefore just sketch the proof: consider
three pendant edges of the skeleton for which at most two labels are
used (this exists by~(i) and \ref{i:7}).  Consider a minimal
connected subgraph $T$ of $R$ that contains these three edges.  It is
easy to check that $T$ is a tree with three pendant edges and a unique
vertex $v$ of degree~3, and that adding to its line graph the vertices
of $K$ corresponding to  at most two labels yields a long pyramid.
To check that the pyramid is long condition \ref{i:8} is used, to
check that two paths of $T$ linking $v$ to pendant edges with the same
label have length at least 2.

\begin{figure}
  \begin{center}
    \includegraphics[height=2.5cm]{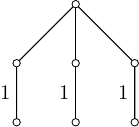}
    \hspace{2.5em}
    \includegraphics[height=2.5cm]{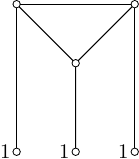}
    \hspace{2.5em}
    \includegraphics[height=2.5cm]{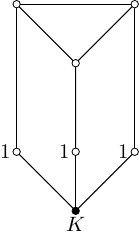}
    \caption{A 1-skeleton, its line graph and the corresponding P-graph.\label{f:1-skeleton}}
  \end{center}
    \begin{center}
    \includegraphics[height=2cm]{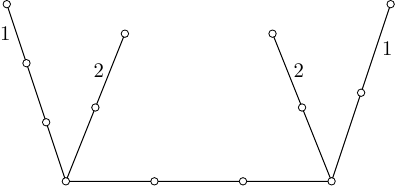}
    \hspace{1.5em}
    \includegraphics[height=2cm]{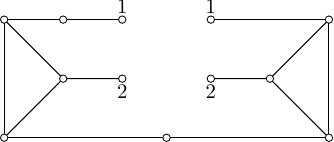}\\[3mm]
    \includegraphics[height=2cm]{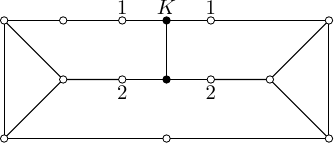}
  \end{center}
  \caption{A 2-skeleton, its line graph and the corresponding P-graph.\label{f:2-skeleton}}
\end{figure}
\begin{figure}
    \begin{center}
    \includegraphics[height=6.5cm]{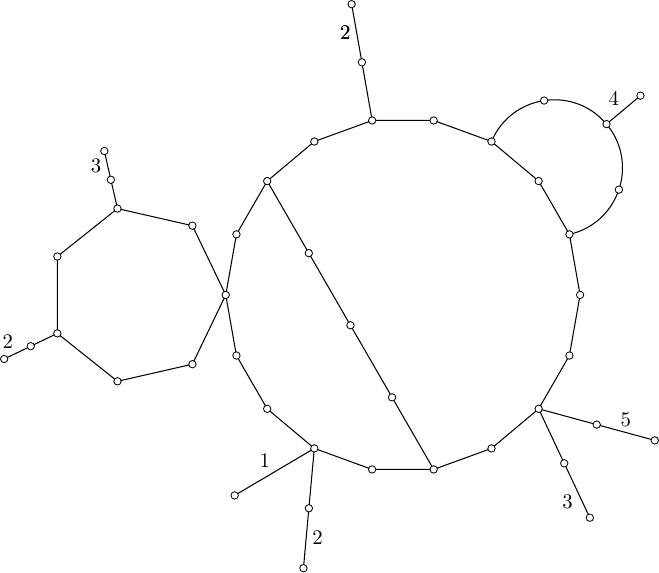}
    \hspace{1.5em}
    \includegraphics[height=6.5cm]{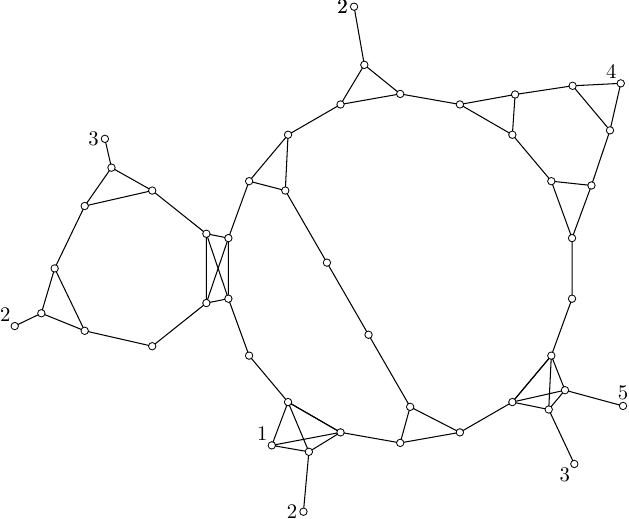}\\[3mm]
    \includegraphics[height=6.5cm]{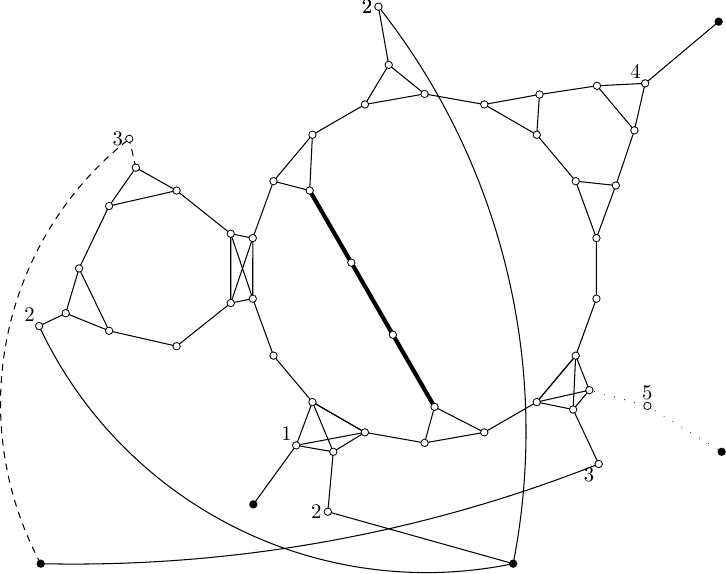}
  \end{center}
  \caption{A 5-skeleton, its line graph and the corresponding P-graph (black vertices are the vertices of the special clique of this P-graph; edges between them are not drawn). An internal (resp.\ claw, clique) segment of this P-graph is represented by a bold (resp.\ dashed, dotted) line.\label{f:5-skeleton}}
\end{figure}

\pagebreak

\section{Connectivity of skeletons}
\label{sec:conn}

In the following theorem we state versions of Menger's theorem that we use in this paper.

\begin{theorem}
Let $G$ be a graph.
\begin{itemize}
\item[(i)] Let $u$ and $v$ be non-adjacent vertices of $G$. Then the maximum number of internally vertex-disjoint paths from $u$ to $v$ is equal to the minimum size of a cutset $S$ of $G$ that separates $\{u\}$ and $\{v\}$.
\item[(ii)] Let $A$ and $B$ be disjoint subsets of $V(G)$. Then the maximum number of vertex-disjoint paths with one endnode in $A$ and the other in $B$ is equal to the minimum size of a cutset $S$ of $G$ that separates $A$ and $B$.
\item[(iii)] Let  $u\in V(G)$ and $B\subseteq V(G)\setminus\{u\}$. Then the maximum number of paths from $u$ to  $B$ that are vertex-disjoint except at $u$ is equal to the minimum size of a cutset $S$ of $G$ that separates $\{u\}$ and~$B$.
\end{itemize}
\end{theorem}

Additionally, we will often use the following variant of Menger's theorem, which is due to Perfect \cite{Perfect}.

Let $G$ be a graph, $x\in V(G)$ and $Y\subseteq V(G)\setminus\{x\}$. A set of $k$ paths $P_1,P_2,\ldots,P_k$ of $G$ is a {\em $k$-fan from $x$ to $Y$} if $V(P_i)\cap V(P_j)=\{x\}$, for $1\leq i<j\leq k$, and $|V(P_i)\cap Y|=1$, for $1\leq i\leq k$. A {\em fan from $x$ to $Y$} is a $|Y|$-fan from $x$ to $Y$.

\begin{lemma}[\cite{bondy.murty:book,Perfect}]\label{Perfect}
Let $G$ be a graph, $x\in V(G)$ and $Y,Z\subseteq V(G)\setminus\{x\}$ such that $|Y|<|Z|$. If there are fans from $x$ to $Y$ and from $x$ to $Z$, then there is a fan from $x$ to $Y\cup\{z\}$, for some $z\in Z\setminus Y$.
\end{lemma}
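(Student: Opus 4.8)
The plan is to recognize Perfect's statement as the independence-augmentation axiom of a suitable matroid, namely a gammoid associated with $x$. First I would remove the single common vertex $x$ by a standard splitting construction: form a graph $G'$ from $G$ by replacing $x$ with $|V(G)|$ pairwise non-adjacent copies $x_1,\dots,x_{|V(G)|}$, each with neighborhood $N(x)$, and set $X=\{x_1,\dots,x_{|V(G)|}\}$. Under this construction a $k$-fan from $x$ to a set $T\subseteq V(G)\setminus\{x\}$ in $G$ corresponds exactly to a system of $k$ pairwise \emph{vertex-disjoint} paths in $G'$, each joining a distinct copy in $X$ to a distinct vertex of $T$: the condition $V(P_i)\cap V(P_j)=\{x\}$ becomes genuine vertex-disjointness once the shared endpoint is distributed among distinct copies, and conversely disjoint paths from $X$ can be merged at $x$ without creating new intersections. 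Thus ``there is a fan from $x$ to $T$'' is equivalent to ``$T$ is linkable onto $X$ by vertex-disjoint paths in $G'$''.

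Next I would invoke the fact that the family
\[
\mathcal I=\{\,T\subseteq V(G)\setminus\{x\} : \text{there is a fan from } x \text{ to } T\,\}
\]
is precisely the family of independent sets of the gammoid defined by $(G',X)$, and that gammoids are matroids (see \cite{Perfect}). Granting this, the lemma is immediate: $Y$ and $Z$ both lie in $\mathcal I$ by hypothesis, and $|Y|<|Z|$, so the independence-augmentation axiom yields an element $z\in Z\setminus Y$ with $Y\cup\{z\}\in\mathcal I$; by the equivalence above this is exactly a fan from $x$ to $Y\cup\{z\}$, as required.

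If a self-contained argument is preferred, the matroid fact can be replaced by a direct augmenting-path proof of the exchange step. I would take a linkage $\mathcal P$ realizing the fan to $Y$ and a linkage $\mathcal Q$ realizing the fan to $Z$ in $G'$, and consider the subgraph formed by their union. Since $|Z|>|Y|$, some vertex $z\in Z$ is an endpoint of a $\mathcal Q$-path but is not covered by $\mathcal P$; starting from $z$ I would trace an alternating walk, following $\mathcal Q$-edges and $\mathcal P$-edges in turn, and argue by the usual flow-augmentation bookkeeping that it terminates either at a copy in $X$ left free by $\mathcal P$ or at a vertex of $Y$, producing a rerouted linkage that still covers all of $Y$ and additionally covers $z$.

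The main obstacle is exactly this last step: verifying that the alternating walk can be chosen so that the rerouted path system remains vertex-disjoint, keeps every vertex of $Y$ covered, and reaches a fresh vertex of $Z\setminus Y$. The splitting construction and the reformulation are routine; essentially all the content sits in this exchange/augmentation argument, which is why appealing to the established matroid structure of gammoids is the most economical route and matches the cited sources \cite{bondy.murty:book,Perfect}.
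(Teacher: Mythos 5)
There is no internal proof to compare against: the paper states Lemma~\ref{Perfect} with the bracketed citation \cite{bondy.murty:book,Perfect} and never proves it, importing it as a known result. Judged on its own, your reduction is sound: splitting $x$ into pairwise non-adjacent copies with neighborhood $N(x)$ does turn fans into vertex-disjoint linkages and back (after the routine truncation making each path meet $X$ and the target set exactly once), and the linkable sets then form the gammoid on $(G',X)$, so matroid augmentation yields the statement. The caveat is that this main route is not an independent proof but a proof-by-citation landing on exactly the source the paper already cites: for this set system, downward closure and non-emptiness are trivial, so ``gammoids are matroids'' is nothing other than the augmentation property in question --- Perfect's theorem itself, in matroid clothing. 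That is legitimate (and matches the paper's treatment, minus the translation), but it means essentially all the mathematical content still resides in the cited result; a genuinely self-contained argument would have to carry out the alternating-walk exchange you sketch in your last paragraph --- or the Menger-based induction of Perfect's original paper --- and that is precisely the step you explicitly leave unverified. What your write-up buys is useful context: it identifies the lemma as the independence-augmentation axiom of a gammoid, which situates the statement conceptually; what it does not do is discharge the proof obligation any further than the paper's own citation already does.
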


For distinct vertices  $v_1,v_2,\ldots,v_k$ of $G$, and pairwise disjoint and non-empty subsets $W_1,W_2,\ldots,W_k$ of $V(G)\setminus\{v_1,v_2,\ldots,v_k\}$,
we say that $k$ vertex-disjoint  paths $P_1,P_2,\ldots,P_k$ are {\em from $\{v_1,v_2,\ldots,v_k\}$ to $\{W_1,W_2,\ldots,W_k\}$} if for some permutation
$\sigma\in\mathbb S_k$,  $P_i\cap \{v_1,v_2,\ldots,v_k\}=\{v_i\}$ and $P_i\cap (W_1\cup W_2\cup\ldots\cup W_k)$ is a vertex of $W_{\sigma(i)}$, for $1\leq i\leq k$.

\begin{lemma}\label{MengerModified}
Let $G$ be a connected graph, $v_1,v_2,\ldots,v_k$ distinct vertices of $G$ and $W_1,W_2,\ldots,W_k$ pairwise disjoint and non-empty subsets of $V(G)\setminus\{v_1,v_2,\ldots,v_k\}$, such that all vertices of $W_1$ are of degree 1. The following holds:
\begin{itemize}
  \item[(1)] if $k=2$, and all vertices of $W_2$ are of degree 1 or $W_2=\{w_2\}$, then there exist 2 vertex-disjoint  paths from $\{v_1,v_2\}$ to $\{W_1,W_2\}$, or a vertex $u$ that separates $\{v_1,v_2\}$ from $W_1\cup W_2$;
  \item[(2)] if $k=3$, $W_2=\{w_2\}$, $W_3=\{w_3\}$ and there exist 2 vertex-disjoint  paths from $\{v_2,v_3\}$ to $\{w_2,w_3\}$, then there exist 3 vertex-disjoint  paths from $\{v_1,v_2,v_3\}$ to $\{W_1,\{w_2\},\{w_3\}\}$, or there exist vertices $u_1$ and $u_2$ such that $\{u_1,u_2\}$ separates $\{v_1,v_2,v_3\}$ from $W_1\cup\{w_2,w_3\}$.
\end{itemize}
\end{lemma}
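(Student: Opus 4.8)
The plan is to reduce both statements to the classical Menger's theorem applied to a suitably modified graph, using the degree-1 hypotheses to control the structure of any small separator. The common idea is that Menger gives us either the desired number of vertex-disjoint paths or a small vertex cut, and the real work is checking that a cut of the ``wrong'' type (one that separates but does not yield the clean form demanded in the conclusion) can always be repaired using the assumption that the vertices of $W_1$ (and sometimes $W_2$) have degree~1.

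For part~(1), I would first handle the degenerate adjacencies and then set up a Menger instance separating $\{v_1,v_2\}$ from $W_1\cup W_2$. If Menger yields a separator of size at most~1 we are done, so assume it yields two vertex-disjoint paths $P_1,P_2$ from $\{v_1,v_2\}$ to $W_1\cup W_2$. The issue is that both paths might end in the same $W_j$, which is not what the conclusion requires (we need paths to $\{W_1,W_2\}$, i.e.\ one path reaching $W_1$ and the other reaching $W_2$ under some matching). Here the hypothesis that every vertex of $W_1$ has degree~1 is crucial: a degree-1 vertex of $W_1$ can be an endpoint of a path but cannot be an internal vertex, so I would argue that by rerouting one can always arrange the two paths to hit distinct sets. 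When $W_2=\{w_2\}$ is a single vertex the argument is even cleaner, since at most one path can end at $w_2$. The main obstacle in this part is the careful case analysis ensuring that if both Menger paths reach $W_1$, we can peel off one of them and reattach it to $W_2$ without creating intersections, exploiting that the $W_1$-endpoints are forced to be terminal.

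For part~(3)---the $k=3$ case---I would bootstrap from a given pair of disjoint paths from $\{v_2,v_3\}$ to $\{w_2,w_3\}$ and try to extend to a third path from $v_1$ to $W_1$. The natural tool is Lemma~\ref{Perfect} (Perfect's fan version of Menger): I would introduce an auxiliary vertex $x$ adjacent to $v_1,v_2,v_3$, take $Z = W_1\cup\{w_2,w_3\}$ (or rather the relevant target set), observe that a $2$-fan from $x$ already exists by hypothesis, and apply the lemma to upgrade it to a $3$-fan, which translates back into three vertex-disjoint paths from $\{v_1,v_2,v_3\}$. If no such fan extension exists, the failure of Menger produces a cut of size~2; I would then show this cut has the form $\{u_1,u_2\}$ separating $\{v_1,v_2,v_3\}$ from $W_1\cup\{w_2,w_3\}$ as required, again using that $W_1$ consists of degree-1 vertices so that no $u_i$ can be forced to lie inside $W_1$ and the separator stays in the ``bulk'' of $G$.

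The hardest part, I expect, is the bookkeeping that converts Menger/Perfect output into the precise matched form $P_i\cap\{v_1,\dots\}=\{v_i\}$ together with $P_i$ meeting $W_{\sigma(i)}$: Menger by itself only gives disjoint paths between two vertex sets, and one must verify that each path can be taken to start at a distinct $v_i$ and that the endpoints distribute one-per-$W_j$. The degree-1 conditions are exactly the lever that forces this distribution, since a degree-1 target is always a strict endpoint and never shared, so the bulk of the proof is a sequence of short rerouting arguments rather than any deep new idea.
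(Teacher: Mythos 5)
Your proposal is correct in outline, and for part (2) it is essentially the paper's own proof: add an apex vertex adjacent to $v_1,v_2,v_3$, apply Menger to get either a $2$-cut (done) or three disjoint paths, view those paths as a $3$-fan from the apex, and play it against the $2$-fan to $\{w_2,w_3\}$ guaranteed by hypothesis via Lemma~\ref{Perfect}, obtaining a fan that hits $w_2$, $w_3$ and one vertex of $W_1$. One correction there: in Lemma~\ref{Perfect} the larger set $Z$ must be the set of endpoints of the three Menger paths (a set of size $3$ to which a fan genuinely exists), not $W_1\cup\{w_2,w_3\}$ itself, to which no fan need exist; your parenthetical hedge suggests you sensed this. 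Where you genuinely diverge is part (1): the paper handles it with the same mechanism --- if both Menger paths end in, say, $W_1$, it combines the resulting $2$-fan from the apex with the $1$-fan (a path) to $W_2$ given by connectivity, and applies Lemma~\ref{Perfect} once more to redistribute the endpoints --- whereas you propose a bare-hands rerouting, which you assert but do not carry out, even flagging it as ``the main obstacle.'' Since that rerouting is the entire content of part (1) under your plan, let me record that it does work: truncate the Menger paths $P_1,P_2$ so each meets $W_1\cup W_2$ only at its endpoint, and suppose both endpoints lie in $W_1$; pick $w\in W_2$ (so $w\notin P_1\cup P_2$) and a minimal path $Q$ from $w$ to $P_1\cup P_2$, meeting it only at its last vertex $p$. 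Internal vertices of $Q$ avoid $W_1\cup W_2$ (degree-$1$ vertices cannot be internal, and if $W_2$ is not entirely of degree $1$ it is the singleton $\{w\}$), and $p$ cannot be the $W_1$-endpoint of either $P_i$, because a degree-$1$ vertex is reachable only through its unique neighbour, which already lies on that $P_i$; hence rerouting the path containing $p$ along $Q$ to $w$ gives two disjoint paths hitting $W_1$ and $W_2$ separately. (If instead both paths end in $W_2$, then $|W_2|\geq 2$, so all of $W_2$ has degree $1$ and the argument is symmetric.) In summary: the paper's route buys uniformity, with Perfect's lemma doing all endpoint redistribution in both parts, while your route is more elementary for part (1) at the cost of exactly the case analysis you postponed.
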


\begin{proof}
Let $G'$ be the graph obtained from $G$ by adding a vertex $v$ ($v\not\in V(G)$) and edges $vv_i$, for $1\leq i\leq k$.

\medskip

\noindent (1) By Menger's theorem, there is a vertex $u$ that
separates $\{v_1,v_2\}$ from $W_1\cup W_2$, or two vertex-disjoint
paths from $\{v_1,v_2\}$ to $W_1\cup W_2$. If the first outcome holds,
then we are done, so we may assume that there are vertex-disjoint
paths $P_1$ and $P_2$ from $\{v_1,v_2\}$ to $W_1\cup W_2$. If both
$W_1$ and $W_2$ contain an endnode of $P_1$ and $P_2$, then we are
again done.  So we assume that both $P_1$ and $P_2$ have an endnode in
w.l.o.g.\ $W_1$, and let these endnodes be $v_1'$ and $v_2'$. This
means that in $G'$ there is a fan from $v$ to $\{v_1',v_2'\}$. Since
$G'$ is connected, there is a fan from $v$ to some $v''\in W_2$, and
therefore, by Lemma \ref{Perfect}, there is a fan from $v$ to
$\{v',v''\}$, for some $v'\in\{v_1',v_2'\}$. This completes the proof of (1).

\medskip

\noindent (2) By Menger's theorem, there are vertices $u_1$ and $u_2$
that separate $\{v_1,v_2,v_3\}$ from $W_1\cup\{w_2,w_3\}$, or three
vertex-disjoint  paths such that each of them has one endnode in
$\{v_1,v_2,v_3\}$ and the other in $W_1\cup\{w_2,w_3\}$. If the first
outcome holds, then we are done, so we may assume that there are vertex-disjoint  paths $P_1,P_2,P_3$ such that each of them has one endnode in
$\{v_1,v_2,v_3\}$ and the other in $W_1\cup\{w_2,w_3\}$. Let the
endnodes of paths $P_1,P_2,P_3$ that are in $W_1\cup\{w_2,w_3\}$ be
$v_1',v_2',v_3'$. This means that in $G'$ there is a fan from $v$ to
$\{v_1',v_2',v_3'\}$. By the conditions of the lemma, there is also a
fan from $v$ to $\{w_2,w_3\}$, and therefore, by Lemma \ref{Perfect},
there is a fan from $v$ to $\{w,w_2,w_3\}$, for some
$w\in\{v_1',v_2',v_3'\}\setminus\{w_2,w_3\}$. Since
$\{v_1',v_2',v_3'\}\setminus\{w_2,w_3\}$ is a subset of $W_1$, this
completes our proof.
\end{proof}

 Recall a standard notion: a
\emph{block} of a graph is an induced subgraph that is connected,  has no cut vertices
and is maximal with respect to these properties.  Recall that every block
of a graph is either 2-connected, or is a single edge.  Recall that
cut vertices of a graph $R$ that are of degree at least 3 are called the {\em
  attaching vertices} of $R$.

\begin{lemma}\label{l11}
  Let $R$ be a $k$-skeleton.  If $C$ is a 2-connected block of $R$, then
  no two vertices of $C$ that are of degree at least 3 in $R$ are
  adjacent.  In particular, every 2-connected block of $R$ is sparse,
  no two adjacent vertices of every cycle of $R$ have degree at least
  3, and if an edge of $R$ is between two vertices of degree at
  least~3, then it is a cutedge of $R$.
\end{lemma}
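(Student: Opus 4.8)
The plan is to prove the first assertion directly, and then observe that all the remaining claims in the statement follow as easy consequences of it. So the heart of the matter is: if $C$ is a $2$-connected block of the skeleton $R$, then no two vertices of $C$ having degree at least $3$ in $R$ are adjacent. I would argue by contradiction, assuming that $u$ and $v$ are adjacent vertices of $C$, both of degree at least $3$ in $R$.

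First I would exploit the structure of a $2$-connected block together with condition~\ref{i:5} of the skeleton definition, which says that for every edge $e$ lying on a cycle of $R$, at least one endnode of $e$ has degree~$2$. Since $C$ is $2$-connected (and not a single edge), the edge $uv$ lies on a cycle of $C$, hence on a cycle of $R$; by~\ref{i:5}, at least one of $u,v$ must have degree~$2$ in $R$, contradicting our assumption. So the core statement is essentially a reformulation of~\ref{i:5}, and the real content is making the implication ``$uv$ lies on a cycle'' precise: in a $2$-connected graph every edge lies on a cycle (this is a standard fact, equivalent to $2$-connectivity for graphs on at least three vertices), and a cycle of $C$ is a cycle of $R$ since $C$ is an induced subgraph.

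Next I would derive the three ``in particular'' claims. That every $2$-connected block $C$ is sparse: take any edge $e=xy$ of $C$; if $e$ lies on a cycle of $C$ then~\ref{i:5} gives that $x$ or $y$ has degree~$2$ in $R$, and since a vertex's degree in $C$ is at most its degree in $R$, one of $x,y$ has degree at most~$2$ in $C$, which is exactly sparseness; and every edge of a $2$-connected block on at least three vertices lies on a cycle, while a block that is a single edge is trivially sparse. That no two adjacent vertices of a cycle of $R$ have degree at least~$3$: this is immediate from~\ref{i:5} applied to the edge joining them, since that edge lies on the cycle. Finally, that an edge between two vertices of degree at least~$3$ must be a cutedge: if such an edge $uv$ were not a cutedge, then $u$ and $v$ would lie in a common $2$-connected block (a non-cutedge lies on a cycle, hence in a $2$-connected block), contradicting the main assertion just proved; equivalently, $uv$ would lie on a cycle, contradicting~\ref{i:5}.

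The step I expect to require the most care is not any single deduction but rather keeping the degree bookkeeping between $R$ and its block $C$ straight: ``degree at least~$3$ in $R$'' is the hypothesis throughout, while the conclusion ``sparse'' is a statement about degrees within $C$, so I must consistently use that $\deg_C(x)\le\deg_R(x)$ in one direction and that a cycle of $C$ is a cycle of $R$ in the other. Beyond that, the only genuinely external input is the elementary fact that in a $2$-connected graph on at least three vertices every edge lies on a cycle; everything else is a direct application of condition~\ref{i:5}. I therefore anticipate this lemma being short, with the main subtlety being to state cleanly why the single-edge blocks cause no trouble (they are sparse and contain no cycle, so~\ref{i:5} is vacuous there).
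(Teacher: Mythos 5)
Your proof is correct and takes essentially the same route as the paper, which disposes of the lemma in one line by noting it is equivalent to condition~\ref{i:5} via the standard fact that an edge of $R$ lies on a cycle if and only if it lies in a 2-connected block. Your write-up simply unpacks that equivalence (and the degree bookkeeping between $C$ and $R$) in more detail.
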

\begin{proof}
  This is equivalent to condition~\ref{i:5} in the definition of a
  P-graph, since an edge of $R$ belongs to a cycle if and only if it
  belongs to a 2-connected block of $R$.
\end{proof}

\begin{lemma}\label{l4}
  Let $R$ be a $k$-skeleton.  If $e_1$ and $e_2$ are edges of $R$, then
  there exists a cycle of $R$ that goes through $e_1$ and $e_2$, or
  there exists a path in $R$ whose endnodes are of degree~1 (in
  $R$) and that goes through $e_1$ and $e_2$.
\end{lemma}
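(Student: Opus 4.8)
The plan is to analyze $R$ through its \emph{block-cut tree} $T$, whose nodes are the blocks of $R$ (each a $2$-connected block or a single bridge) and the cut vertices of $R$, and to argue that the two outcomes of the lemma correspond to ``$e_1,e_2$ lie in a common $2$-connected block'' versus ``they lie in different blocks''. First I record the structural facts I will lean on. A $2$-connected block of $R$ contains no vertex of degree $1$ in $R$, since such vertices are leaves and hence lie on pendant edges. Moreover, every leaf block of $T$ (a block meeting only one cut vertex $u$) must be a pendant edge: otherwise the component of $R\setminus u$ consisting of that block minus $u$ would contain no vertex of degree $1$, contradicting condition~\ref{i:3}. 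Consequently every $2$-connected block of $R$ is a non-leaf of $T$ and therefore meets at least two cut vertices, and by Lemma~\ref{l11} each $2$-connected block is sparse, i.e.\ no two of its vertices of degree $\geq 3$ in $R$ are adjacent. We may assume $e_1\neq e_2$, since if $e_1=e_2$ then either the edge lies on a cycle (first outcome) or it is a bridge, and extending it to a degree-$1$ vertex at each end via condition~\ref{i:3} gives the second outcome.

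To prove the lemma I subdivide $e_1$ and $e_2$ by new vertices $x_1$ and $x_2$, obtaining a graph $R'$; a cycle (resp.\ path) of $R'$ through $x_1,x_2$ corresponds to a cycle (resp.\ path) of $R$ through $e_1,e_2$, the degree-$1$ vertices of $R'$ and $R$ coincide, and the facts above transfer to $R'$ since subdivision preserves $2$-connectivity and the block structure. If $x_1$ and $x_2$ lie in a common $2$-connected block of $R'$, then by the standard fact that any two vertices of a $2$-connected graph lie on a common cycle (an immediate consequence of Menger's theorem), there is a cycle of $R'$ through $x_1$ and $x_2$, giving the first outcome.

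Otherwise $x_1$ and $x_2$ lie in distinct blocks $B_1\ni x_1$ and $B_2\ni x_2$, and I aim for the second outcome. Let $B_1=D_0,c_1,D_1,c_2,\dots,c_m,D_m=B_2$ be the path of $T$ from $B_1$ to $B_2$, where the $D_i$ are blocks and each cut vertex $c_i$ lies in $D_{i-1}\cap D_i$. Since $B_1$ and $B_2$ are non-leaf blocks, each has a further cut vertex of $R$, say $d_1\neq c_1$ on $B_1$ and $d_2\neq c_m$ on $B_2$. I build the path in pieces: inside each intermediate block $D_i$ ($1\leq i\leq m-1$) a path from $c_i$ to $c_{i+1}$; inside $B_1$ a path from $d_1$ through $x_1$ to $c_1$; and inside $B_2$ a path from $c_m$ through $x_2$ to $d_2$. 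As distinct blocks meet only in cut vertices, and each internal $c_i$ is shared by exactly $D_{i-1}$ and $D_i$, these pieces concatenate into a single (simple) path from $d_1$ through $x_1,x_2$ to $d_2$. Finally I extend past $d_1$ and $d_2$: by condition~\ref{i:3} the component of $R\setminus d_1$ not containing $B_1$ contains a vertex of degree $1$, reached by a path internally disjoint from everything built so far (it lives in a different part of $T$), and symmetrically for $d_2$; the two leaves obtained are distinct, being separated by the spine. This yields a path through $e_1$ and $e_2$ whose ends have degree~$1$.

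The step I expect to be the main obstacle is the intra-block routing: finding, inside a $2$-connected block $B_1$, a path that starts at $c_1$, traverses the prescribed edge $e_1$, and ends at the prescribed cut vertex $d_1$ (and, in the block $B_2$, the analogous path through $x_2$). Such a ``path through a prescribed edge between two prescribed vertices'' need not exist in an arbitrary $2$-connected graph, so here I will exploit that the block is sparse by Lemma~\ref{l11}: with no two adjacent vertices of degree $\geq 3$, the block is a subdivision of a smaller multigraph and is topologically well behaved, which makes the required linkage robust. The fan arguments of Lemma~\ref{Perfect} and Lemma~\ref{MengerModified} provide the routing, while conditions~\ref{i:4} and~\ref{i:5} control the $2$-cutsets of $R$ and guarantee that the linkage cannot be blocked. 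When instead $e_1$ or $e_2$ lies in a bridge-block, routing through that edge is immediate: either its far end is already a degree-$1$ vertex (so no extension is needed on that side) or it is a cut vertex to which the same extension argument via condition~\ref{i:3} applies.
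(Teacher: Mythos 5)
Your overall strategy (block--cut tree; a cycle when $x_1,x_2$ lie in a common $2$-connected block, otherwise a spine through the tree, extended past $d_1$ and $d_2$ to leaves via condition~\ref{i:3}) is sound and genuinely different from the paper's proof, which never mentions blocks: the paper applies Menger's theorem directly to $\{u_1,v_1\}$ and $\{u_2,v_2\}$ to get either the cycle or a cut vertex $x$, and then on each side of $x$ applies Lemma~\ref{MengerModified} to $\{u_i,v_i\}$ and $\{Y_i,x\}$ (where $Y_i$ is the set of degree-$1$ vertices on that side) to obtain a path from a leaf through $e_i$ to $x$, the cut-vertex outcome of Lemma~\ref{MengerModified} being excluded by condition~\ref{i:3}. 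However, as written your proof has a genuine gap, and it sits exactly where you flag ``the main obstacle'': the intra-block routing, i.e.\ the existence, inside the $2$-connected block $B_1$, of a path from $c_1$ through $x_1$ to the second cut vertex $d_1$ (and the analogous path in $B_2$). You never prove this; you only assert that sparsity (Lemma~\ref{l11}), the fan lemmas, and conditions~\ref{i:4} and~\ref{i:5} ``provide the routing'' and ``guarantee that the linkage cannot be blocked.'' That is a declaration of intent, not an argument, and since the whole second case of the lemma rests on it, the proof is incomplete.

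Ironically, the missing step is easier than you think, and your stated reason for distrusting it is wrong: in \emph{any} $2$-connected graph, for any two distinct vertices $a,b$ and any third vertex $v$ (hence, after subdividing, through any prescribed edge), there is an $a$--$b$ path through $v$. Indeed, add a new vertex $w$ adjacent to $a$ and $b$; the resulting graph is still $2$-connected, so by the standard theorem that any two vertices of a $2$-connected graph lie on a common cycle, $v$ and $w$ lie on a common cycle, and deleting $w$ from that cycle leaves an $a$--$b$ path through $v$. Applied to $B_1$ with $a=c_1$, $b=d_1$, $v=x_1$, this gives the routing with no appeal to sparsity, to~\ref{i:4}, to~\ref{i:5}, or to the fan lemmas at all. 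Had you included this short argument (and tightened the similarly unproved bookkeeping in the bridge-block cases, where $x_1$ is itself a cut vertex of the subdivided graph and lies in two blocks), your proof would be complete; as it stands, its central linkage is asserted rather than proved.
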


\begin{proof}
  We set $e_1 = u_1v_1$ and $e_2 = u_2v_2$.  We apply Menger's theorem
  to $\{u_1, v_1\}$ and $\{u_2, v_2\}$ (or their one-element subsets if these sets are not disjoint).  If the outcome is a pair of
  vertex-disjoint  paths, then we obtain the cycle whose existence is
  claimed.  We may therefore assume that the outcome is a cut vertex
  $x$ that separates $e_1$ from $e_2$.  Hence, $R$ is vertex-wise
  partitioned into $X_1$, $\{x\}$ and $X_2$, in such a way that
  $\{u_1, v_1\} \subseteq X_1 \cup \{x\}$ and
  $\{u_2, v_2\} \subseteq X_2 \cup \{x\}$ and there are no edges
  between $X_1$ and $X_2$.
  We now show that $R[X_1\cup \{ x\} ]$ contains a path from a vertex of degree 1 in $R$
  to $x$ that contains $e_1$. Since $R$ is connected this is clearly true if an endnode of $e_1$
  has degree 1 in $R$. So we may assume that both endnodes of $e_1$ are of degree greater than 1
  in $R$. Let $Y_1$ be the set of all vertices in $X_1$ that have degree 1 in $R$. Note that
  $Y_1\neq \emptyset$ by \ref{i:3} of the definition of the skeleton.
  Suppose $u_1=x$. By \ref{i:3} of the definition of the skeleton, there exists a path in $R[X_1]$ from a vertex of degree 1 to $v_1$, and this path can be extended to a desired path by adding the edge $v_1u_1$. Therefore, by symmetry, we may assume that
  $x\not\in \{ u_1,v_1\}$.
  In $R[X_1 \cup \{x\}]$, we apply Lemma \ref{MengerModified}
  to $\{u_1, v_1\}$ and $\{Y_1,x\}$.
  If we obtain a cut vertex  $y$ that
  separates $\{u_1, v_1\}$ from $\{x\} \cup Y_1$, then $y$ is cut vertex
  of $R$ (separating $e_1$ from $Y_1\cup X_2$) and the component of $R\sm y$
  that contains $e_1$ contradicts~\ref{i:3}.  Hence, we
  obtain two vertex-disjoint  paths, whose union yields a path $P_1$ that contains $e_1$
  from a vertex of degree 1 (in $R$) to $x$.  A similar path $P_2$
  exists in $R[X_2 \cup \{x\}]$. The union of $P_1$ and $P_2$ yields
  the path whose existence is claimed.
\end{proof}

\begin{lemma}\label{l12}
  Let $R$ be a $k$-skeleton.  Every 2-connected induced subgraph $D$ of
  $R$ has at least 3 distinct vertices that have neighbors outside
  $D$. In particular, every 2-connected block of $R$ has at least 3
  attaching vertices.
\end{lemma}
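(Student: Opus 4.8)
The plan is to argue by contradiction on the number of vertices of $D$ that have a neighbour outside $D$. Write $A$ for the set of such vertices. First I would record two easy facts: since $D$ is $2$-connected every vertex of $D$ has degree at least $2$ in $R$, so none of the (at least three) degree-$1$ endpoints of the pendant edges guaranteed by~\ref{i:1} lies in $D$; hence $D$ is a proper induced subgraph of the connected graph $R$, which forces $|A|\ge 1$. Suppose for contradiction that $|A|\le 2$; the goal is to rule out $|A|=1$ and $|A|=2$.

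For $|A|=1$, say $A=\{x\}$: because $D$ is $2$-connected, $D\setminus x$ is connected and nonempty, and every vertex of $D\setminus x$ has all its neighbours inside $D$; thus $D\setminus x$ is exactly one component of $R\setminus x$. Every vertex of this component has degree at least $2$ in $R$ (its degree is computed entirely inside the $2$-connected graph $D$), contradicting condition~\ref{i:3}, which demands a degree-$1$ vertex in each component of $R\setminus x$.

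For $|A|=2$, say $A=\{a,b\}$: here $\{a,b\}$ is a vertex cutset of $R$ separating $D\setminus\{a,b\}$ from the rest. Any component $C$ of $R\setminus\{a,b\}$ contained in $D$ has all vertices of degree at least $2$ in $R$, so by condition~\ref{i:4} the graph $R[C\cup\{a,b\}]$ must be a chordless path from $a$ to $b$. If there is a single such component then $D$ itself is this chordless path, which is not $2$-connected---a contradiction. If there are at least two such components $C_1,C_2$, then $R[C_1\cup\{a,b\}]$ and $R[C_2\cup\{a,b\}]$ are two internally disjoint paths from $a$ to $b$; I would check that their internal vertices have degree exactly $2$ in $R$ (each lies in some $C_j$ and hence has all its neighbours in $C_j\cup\{a,b\}$, where it already has degree $2$), while $a$ and $b$ each have degree at least $3$ (a neighbour in $C_1$, one in $C_2$, and one outside $D$). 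Thus these two paths are \emph{parallel branches}, contradicting condition~\ref{i:2}.

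In all cases we reach a contradiction, so $|A|\ge 3$. For the ``in particular'' statement, apply this to a $2$-connected block $D$: each of the three vertices of $A$ is a cut vertex of $R$ (it separates the rest of its block from the vertices it sees outside) and has degree at least $3$ (degree at least $2$ inside $D$ plus a neighbour outside), hence is an attaching vertex of $R$. The step I expect to be most delicate is the $|A|=2$ analysis: one has to be careful that the internal vertices of the two paths really have degree $2$ in all of $R$ and that $a$ and $b$ are genuine branch vertices, so that condition~\ref{i:2} (no parallel branches) applies rather than merely producing two internally disjoint paths.
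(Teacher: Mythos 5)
Your proof is correct, and it takes a genuinely different route from the paper's. The paper argues constructively: it grows three paths from degree-1 vertices of $R$ into $D$, with a case analysis on whether the first attachment vertex $v_1$ (resp.\ the pair $\{v_1,v_2\}$) separates the already-built paths from the rest of $D$, and it extracts the three desired vertices as the endpoints of those paths. You instead argue by contradiction on the size of the set $A$ of vertices of $D$ with outside neighbours. Both arguments lean on the same skeleton conditions in the same roles --- a single separating vertex is killed by~\ref{i:3}, a separating pair by~\ref{i:4} together with~\ref{i:2} --- but your organization is leaner: you never build any paths, and you avoid condition~\ref{i:5} entirely (the paper invokes it to ensure its first two attachment vertices $v_1,v_2$ are non-adjacent, a step your argument simply does not need). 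Your case $|A|=2$ is the nice point: since $D$ contains no degree-1 vertices of $R$, condition~\ref{i:4} forces every component of $R\setminus\{a,b\}$ inside $D$ to induce a chordless $a$--$b$ path, whence $D$ is either a single such path (not 2-connected) or the union of two parallel branches (contradicting~\ref{i:2}); your verification that the internal vertices of those paths have degree 2 in all of $R$, and that $a$ and $b$ are genuine branch vertices, is exactly the care needed to make~\ref{i:2} applicable, and it goes through because a component of $R\setminus\{a,b\}$ inside $D$ has all its neighbours in $C\cup\{a,b\}$. The paper's constructive proof produces the three paths explicitly, but nothing in the statement or elsewhere requires them, so your indirect argument obtains the same lemma at lower cost; the block/attaching-vertex conclusion is also handled correctly, via the standard fact that a vertex of a block with a neighbour outside it is a cut vertex.
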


\begin{proof}
  Let $D$ be a 2-connected induced subgraph of $R$. Let
  $u_1$ be a degree 1 vertex of $R$ (it exists by~(i)). Since
  $R$ is connected, there is a path $P_1=u_1\ldots v_1$, where $v_1$
  is the unique vertex of $P_1$ in $D$. In particular, $v_1$ is a
  vertex of $D$ with a neighbor outside of $D$.

  If $v_1$ is not a cut vertex of $R$ that separates
  $P_1 \setminus v_1$ from $D\setminus v_1$, then there is a path
  $P_2=u_1\ldots v_2$, where $v_2$ is the unique vertex of $P_2$ in
  $D$. Otherwise, by~\ref{i:3}, the component $C$ of $R\setminus v_1$
  that contains $D\setminus v_1$ has a vertex $u_2$ of degree 1 in
  $R$, and a path $P_2=u_2\ldots v_2$, where $v_2$ is the unique
  vertex of $P_2$ from $D$. So in both cases we get a vertex $v_2$
  distinct from $v_1$ such that both $v_1$ and $v_2$ have neighbors
  outside $D$.  Since $D$ is 2-connected, $v_1$ and $v_2$ are
  contained in a cycle of $D$, so by \ref{i:5}, $v_1v_2$ is not an
  edge of $R$.

  Suppose that $\{ v_1,v_2\}$ is not a cutset of $R$ that separates
  $(P_1\cup P_2)\setminus \{ v_1,v_2\}$ from a vertex of $D$. Then
  there is a path $P_3=u_3\ldots v_3$ in $R\setminus \{ v_1,v_2\}$,
  where $u_3$ is a vertex of $(P_1\cup P_2)\setminus \{ v_1,v_2\}$ and
  $v_3$ is the unique vertex of $D$ in $P_3$, and hence $v_1,v_2,v_3$
  are the desired three vertices.

So we may assume that $\{ v_1,v_2\}$ is a cutset of $R$ that separates
$(P_1\cup P_2)\setminus \{ v_1,v_2\}$ from a vertex of
$D$. By~\ref{i:2} there is a component $C'$ of
$R\setminus \{ v_1,v_2\}$ such that $C'\cap D\neq \emptyset$ and
$R[C'\cup \{ v_1,v_2\}]$ is not a chordless path. By~\ref{i:4}, $C'$
contains a vertex $u_3$ of degree 1 in $R$, and a path
$P_3=u_3\ldots v_3$, where $v_3$ is the unique vertex of $P_3$ in
$D$. Hence $v_1,v_2,v_3$ are the desired three vertices.

Finally, observe that if $D$ is a block then each of $v_1,v_2,v_3$ is
a cut vertex of $R$, and hence $D$ has at least three attaching
vertices.
\end{proof}

\begin{lemma}\label{l15}
  Let $R$ be a $k$-skeleton.  Let $x_1$ and $x_2$ be  branch vertices
  of $R$ (not necessarily distinct).  Then, there are two paths $P_1 = x_1 \dots y_1$ and
  $P_2 = x_2 \dots y_2$, vertex-disjoint (except at $x_1$ if
  $x_1=x_2$) such that $y_1$ and $y_2$ both have degree 1 and are incident with edges with
  the same label.
\end{lemma}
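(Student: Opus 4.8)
The plan is to turn the statement into a pure connectivity question and then feed it into the Menger-type tools of this section. By condition~\ref{i:7}, fix a label $\ell$ that is used at least twice, and let $W$ be the set of degree-$1$ vertices whose incident pendant edge has label $\ell$; thus $|W|\ge 2$ and, by construction, the pendant edges at the vertices of $W$ all carry the common label $\ell$. Since $\ell$ is repeated, condition~\ref{i:8} forbids any pendant edge with label $\ell$ from having an endnode of degree at least $3$, so the non-leaf endnode of each such edge has degree exactly $2$ and each vertex of $W$ ends a limb; by~\ref{i:9} the limbs ending in two distinct vertices of $W$ are non-parallel. The upshot is that it suffices to produce two paths, vertex-disjoint except at $x_1$ when $x_1=x_2$, from $\{x_1,x_2\}$ to two \emph{distinct} vertices of $W$: their endnodes are then degree-$1$ vertices incident to edges with the same label~$\ell$, as required.

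First I would route to $W$ by Menger's theorem. If $x_1\ne x_2$, I apply Lemma~\ref{MengerModified}(1) with $v_1=x_1$, $v_2=x_2$, with $W_1=\{y\}$ for a fixed $y\in W$ and $W_2=W\setminus\{y\}$ (this is legitimate since every vertex of $W$ has degree~$1$). If $x_1=x_2=x$, I apply the fan form of Menger's theorem from $x$ to $W$. In the favourable outcome I obtain two vertex-disjoint paths from $\{x_1,x_2\}$ to $\{W_1,W_2\}$ (respectively a $2$-fan from $x$ into $W$); in either case the two paths end in distinct vertices of $W$ and we are done. Hence I may assume the other outcome: a single vertex $u$ that separates $\{x_1,x_2\}$ (respectively $x$) from $W$, and clearly $u\notin\{x_1,x_2\}$.

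Disposing of this cut is the heart of the argument. First, the case $k=1$ cannot produce it: then \emph{every} leaf carries label~$\ell$, so $W$ is the whole set of degree-$1$ vertices, while by~\ref{i:3} the component of $R\setminus u$ containing $x_1$ has a degree-$1$ vertex, which would then lie in $W$ on the same side as $x_1$, contradicting the separation. So $k\ge 2$. Next I reduce to the case where $u$ is an \emph{attaching} vertex. If $\deg_R(u)=2$ then $R\setminus u$ has exactly two components, all of $W$ lies in one of them and $x_1,x_2$ in the other; moreover $u$ cannot be internal to a limb (a limb side contains only one leaf, whereas $|W|\ge 2$ lies on the far side), so $u$ is internal to a branch $B$. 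Since $B$ carries no leaf on its degree-$2$ interior, the branch vertex of $B$ on the far side of $u$ is an attaching vertex through which every path from $\{x_1,x_2\}$ to $W$ must pass; I replace $u$ by it.

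With $u$ an attaching vertex and $k\ge 2$ I would invoke condition~\ref{i:10}. Let $\overline H$ be the union of those $u$-petals containing no vertex of $\{x_1,x_2\}$; this is a nonempty union of petals, properly contained in $R$ (the petal of $x_1$ is excluded) and containing $W$. By the ``moreover'' part of~\ref{i:10} some label $j$ has pendant edges both in $\overline H$ and in $(R\setminus\overline H)\cup\{u\}$, so $j$ is a repeated label with a leaf strictly on the near side; combined with the first part of~\ref{i:10} (each $u$-petal carries at least two labels) and with~\ref{i:3} (every component of $R\setminus u$ contains a leaf), this lets me re-select the target label to a repeated one two of whose leaves lie on the near side, and repeat the whole argument on the strictly smaller near side $(R\setminus\overline H)\cup\{u\}$. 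The hard part is precisely this re-selection and its termination: I must rule out that a single cut vertex simultaneously confines the leaves of \emph{every} repeated label to its far side and, when $x_1=x_2$, exhibit a repeated label with two leaves that are $2$-connected to $x$. The label-bridging supplied by~\ref{i:10} together with the guaranteed leaves of~\ref{i:3} is exactly what forbids a repeated label from being trapped behind a single cut; converting this into a clean contradiction, or a strictly decreasing potential on the branch-vertex side (choosing $u$ and $\ell$ so that the near side is inclusion-minimal), is the main obstacle of the proof.
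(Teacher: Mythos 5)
Your setup matches the spirit of the paper's proof (Menger plus attaching vertices, petals, and conditions \ref{i:3}, \ref{i:9}, \ref{i:10}), but the proof is not complete: the step you yourself flag as ``the main obstacle'' is precisely the heart of the lemma, and the plan you sketch for it does not work. After obtaining the cut vertex $u$ and the bridging label $j$ from the ``moreover'' part of~\ref{i:10}, you propose to ``re-select the target label to a repeated one two of whose leaves lie on the near side'' and recurse on the near side. Such a label need not exist: the near side may contain a single degree-$1$ vertex in total (condition \ref{i:3} only guarantees one leaf per component), so no repeated label has two leaves there. Moreover, the near-side pendant edge of the bridging label $j$ may lie in a solitary limb hanging at $u$ (such a limb belongs to no $u$-petal, so your claim that $\overline H$ contains $W$, and more importantly that the near-side $j$-leaf is on the $\{x_1,x_2\}$-side of $u$, can fail); in that case the same cut $u$ also confines all label-$j$ leaves, and the recursion makes no progress. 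Two smaller points: ``clearly $u\notin\{x_1,x_2\}$'' is unjustified (the Menger cut can be $x_1$ itself, and the paper treats the cases $x=x_1$ and $x_1=x_2$ separately), and a recursion on $(R\setminus\overline H)\cup\{u\}$ is not well-founded inside the lemma's hypotheses, since that graph is not itself a skeleton.

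The paper closes this step differently, and this is the idea your proposal is missing: the correct use of the bridging label $j$ is not to find two leaves of $j$ on the near side, but to route \emph{one} of the two paths through the cut vertex into the far side, ending at the far-side $j$-leaf, while the other path stays on the near side and ends at a near-side $j$-leaf. To make this work one cannot fix the label $\ell$ in advance; instead one assumes that \emph{every} repeated label $i$ admits a separation $(X,\{x\},Y)$ with $x_1,x_2\in X\cup\{x\}$ and all label-$i$ leaves in $Y$, and chooses the label and the separation minimizing $|X|$. Minimality forces $X\cup\{x\}$ to contain no limb ending at $x$, hence to be one $x$-petal or a union of two; then \ref{i:10} supplies $j$ with leaves on both sides, and Lemma~\ref{MengerModified} applied to $\{x_1,x_2\}$ and $\{Z,x\}$ (where $Z$ is the set of near-side $j$-leaves) either yields the two desired paths --- the one reaching $x$ being extended through $Y$ to the far-side $j$-leaf --- or yields a cut vertex giving a separation for $j$ with a strictly smaller near side, contradicting minimality. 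This global extremal choice is exactly the ``strictly decreasing potential'' you were looking for; without it (or an equivalent device), the argument as proposed has a genuine gap.
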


\begin{proof}
First suppose that there exists a label $i$ that is used at
least twice in $R$, and such that there does not exist a vertex $x$ and
two sets $X, Y \subset V(R)$ such that $X, Y, \{x\}$ form a partition
of $V(R)$, $x_1, x_2 \in X\cup\{x\}$, all degree 1 vertices from edges
with label $i$ are in $Y$, and there are no edges between $X$ and $Y$.
Then, by Menger's theorem there exist two vertex-disjoint  paths (except at $x_1$ if $x_1=x_2$) between $\{x_1,x_2\}$ and the set of all degree 1 vertices from edges with label $i$.

So, suppose that in $R$, for every label $i$ that is
used at least twice in $R$, there exists a vertex $x$ and two sets
$X, Y \subseteq V(R)$ such that $X, Y, \{x\}$ form a partition of
$V(R)$, $x_1, x_2 \in X\cup\{x\}$, all degree 1 vertices from edges
with label $i$ are in $Y$, and there are no edges between $X$ and $Y$.
We then choose $i$, $x$, $X$ and $Y$ subject to the minimality of $X$.
We claim that $x$ is an attaching vertex of $R$.  If $x\in \{x_1, x_2\}$,
it is true by assumption.  Otherwise, if $x$ has a unique neighbor
$x'$ in $X$, then $x'$ is a cut vertex that contradicts the minimality
of $X$ (it separates  $X\sm \{x'\}$ from $Y\cup\{x\}$).  Hence, $x$ has at least
two neighbors in $X$, and at least one in $Y$, so it is indeed an
attaching vertex.

Suppose that $X\cup \{x\}$ contains a limb of
$R$ ending at $x$. This limb cannot have $x_1$ or $x_2$ as its internal vertex,
so we can move it to $Y$ which contradicts the minimality of $X$.  It follows that
$X\cup \{x\}$ is an $x$-petal, or is the union of two $x$-petals $X_1$
(that contains $x_1$) and $X_2$ (that contains $x_2$).  In this last
case, by~\ref{i:10}, there exists a label $j$ that is used in both
$X_1$ and $R\setminus X_1$. So, there exists a path from $x_1$ to an
edge with label $j$ in $X_1\setminus\{x\}$  and a path in $R\setminus X_1$
from $x_2$ to an edge with label $j$, and the conclusion follows.
When $X \cup \{x\}$ is an $x$-petal, we note that there exists another
$x$-petal included in $Y\cup \{x\}$, because $Y\cup \{x\}$ cannot be a
single limb since a label is used twice in $Y$.  Hence, by~\ref{i:10},
there exists a label $j$ that is used in both $X$ and $Y$.  Let $Z$ be
the set of  degree 1 vertices from $X$ which are the degree 1 ends of edges with label~$j$.

First suppose that $x=x_1$. Since $R[Y\cup \{ x\}]$ is connected, it contains a path from $x$
to a vertex incident to an edge labeled $j$. If $x_2=x$ then similarly $R[X\cup \{ x\}]$
contains a path from $x$ to a vertex in $Z$, and the result holds. So we may assume that $x_2\in X$.
By connectivity of $X$ there exists a path in $R[X]$ from $x_2$ to a vertex of $Z$, and the result holds.
Therefore, by symmetry, we may assume that $x\not\in \{ x_1,x_2\}$. Now suppose that
$x_1=x_2$. If there are two paths from $x_1$ to $Z\cup \{ x\}$, then the result holds
(by possibly extending one of the paths from $x$, through $Y$, to a vertex incident to an edge labeled $j$).
Otherwise, by Menger's theorem there is a cut vertex that contradicts the minimality of $X$.
Therefore we may assume that $x_1\neq x_2$.

We now
apply Lemma \ref{MengerModified} to $\{x_1, x_2\}$ and $\{Z,x\}$.  If the
conclusion is two disjoint paths, we are done (by extending the path
ending in $x$ to an edge with label $j$ in $Y$).  And if the outcome
is a cut vertex  $x'$ that separates $\{x_1, x_2\}$ from $\{x\}\cup Z$,
then we define $X'$ as the union of the components of $R\sm \{x'\}$ that
contain $x_1$ and $x_2$.  This contradicts the minimality of $X$.
\end{proof}

\begin{lemma}
  \label{l:bPyr}
  Let $R$ be a $k$-skeleton. Let $P = x_1 \dots x_2$ be a branch of
  $R$ 
  and $x'_1$ a neighbor of $x_1$ not in $P$.
  Then there are
  three paths $P_1 = x_1 \dots y_1$, $P'_1 = x_1 x'_1 \dots y'_1$ and
  $P_2 = x_2\dots y_2$, vertex-disjoint except $P_1$ and $P_1'$ sharing  $x_1$, and such that
  $y_1, y'_1$ and $y_2$ are degree 1 vertices incident with edges with at most two different labels.
\end{lemma}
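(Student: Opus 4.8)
The plan is to read this as the three-path refinement of Lemma~\ref{l15}: there we obtained two vertex-disjoint paths from two branch vertices to degree-1 vertices carrying a common label, and here we must add a third path and, in addition, prescribe the first edge $x_1x_1'$ of one of the two paths leaving $x_1$. The crucial simplification is that the condition ``at most two different labels'' is free once two of the three endpoints already share a label: if $y_1,y_1',y_2$ are degree-1 and two of them carry the same label $\ell$, then the third carries some label and the total is at most two. So the real content is to find three paths that are disjoint except that $P_1$ and $P_1'$ meet at $x_1$, ending at degree-1 vertices, two of the endpoints sharing a label. The tools will be the same as in Lemmas~\ref{l4} and~\ref{l15}: Menger's theorem, Perfect's refinement (Lemma~\ref{Perfect}), the three-terminal variant Lemma~\ref{MengerModified}(2), and the skeleton axioms, chiefly \ref{i:3}, \ref{i:4}, \ref{i:5} and the petal/label condition~\ref{i:10}.

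First I would apply Lemma~\ref{l15} to the branch vertices $x_1$ and $x_2$, obtaining vertex-disjoint paths $A_1=x_1\dots a_1$ and $A_2=x_2\dots a_2$ with $a_1,a_2$ of degree $1$ and incident with edges of the same label $\ell$. Replacing $A_1$ by a chordless path of $R[V(A_1)]$ with the same ends (and likewise $A_2$), I may assume both are chordless, so that $x_1'$, being adjacent to the end $x_1$ of $A_1$, can meet $A_1$ only as its first internal vertex. If $A_1$ leaves $x_1$ along $x_1x_1'$, I set $P_1':=A_1$ and $P_2:=A_2$, and it remains to route a path $P_1$ from $x_1$, leaving along some neighbor $z$ of $x_1$ with $z\notin P$ and $z\neq x_1'$ (such a $z$ exists since $\deg_R(x_1)\geq 3$ and the only neighbor of $x_1$ on $P$ is one vertex), to an arbitrary degree-1 vertex while avoiding $(A_1\cup A_2)\setminus\{x_1\}$. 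Otherwise $x_1'\notin A_1$, and I set $P_1:=A_1$, $P_2:=A_2$, and must route $P_1'$ from $x_1$ along $x_1x_1'$ to an arbitrary degree-1 vertex, avoiding $(A_1\cup A_2)\setminus\{x_1\}$. In either case the first edge at $x_1$ is now prescribed and the only missing object is a single path out of $x_1$ through a fixed neighbor, reaching a degree-1 vertex in the graph $R'$ obtained from $R$ by deleting the interiors of the two paths already fixed (but keeping $x_1$).

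To produce this last path I would work in $R'\setminus\{x_1\}$, where its prescribed start ($x_1'$ or $z$) becomes a source, and apply Menger's theorem from that source to the set of all degree-1 vertices of $R$ lying in $R'$; prepending $x_1$ gives the required path with the correct first edge. An obstruction returned by Menger is a separating vertex (or, after invoking Lemma~\ref{MengerModified} to organise the fans, a two-element separator), and exactly as in the proofs of Lemmas~\ref{l4} and~\ref{l15} such a separator is refuted by axiom~\ref{i:3} or~\ref{i:4}: it forces a degree-1 vertex on the far side, which either contradicts the separation or supplies an alternative route, while Perfect's Lemma~\ref{Perfect} upgrades a fan to one reaching an additional target. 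Since the target of this third path is unconstrained, no label matching is needed at this stage, so the only real requirement is connectivity of the relevant component of $R'\setminus\{x_1\}$ to some degree-1 vertex.

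The main obstacle is the simultaneous reconciliation of the prescribed edge $x_1x_1'$, the global disjointness, and the label matching, and this is sharpest when $x_1$ is a cut vertex, so that the prescribed start of the last path may be stranded in a different component of $R\setminus\{x_1\}$ — that is, in a different $x_1$-petal — from $x_2$ and from the fixed path $A_1$ (and the residual case in which $x_1'$ happens to lie on $A_2$ must be absorbed by re-routing $P_1'$ along the subpath of $A_2$ from $x_1'$ to $a_2$). Here I would lean on condition~\ref{i:10}: every $x_1$-petal uses at least two labels, and for any union of petals there is a label appearing both inside and outside it. This is precisely what lets me pre-select the same-label pair in two prescribed petals and still route the third, prescribed-edge path to a degree-1 vertex in its own petal, so that all three coexist; it is also what makes the case $\deg_R(x_1)\geq 3$ harmless, since a third neighbor $z$ is always available. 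When instead $P$ is a cut-edge (Lemma~\ref{l11}) or $\{x_1,x_2\}$ is a cutset, the two sides separate and the disjointness of $P_2$ from $P_1,P_1'$ is automatic, so the same petal/label argument applies and is if anything easier.
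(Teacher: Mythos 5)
Your proposal correctly identifies the two main ingredients (Lemma~\ref{l15} and Lemma~\ref{MengerModified}), and your observation that the ``at most two labels'' condition comes for free once two endpoints share a label is exactly right. But the core routing strategy has a genuine gap: you fix the two paths $A_1,A_2$ produced by Lemma~\ref{l15} and then try to route the third path inside the graph $R'\setminus\{x_1\}$ obtained by deleting their interiors. A Menger obstruction in that graph is a cutvertex (or small separator) of $R'$, \emph{not} of $R$, so it cannot be refuted by axioms~\ref{i:3} or~\ref{i:4}, which only constrain cutsets of $R$ itself. The analogy you invoke with Lemmas~\ref{l4} and~\ref{l15} does not carry over: in those proofs Menger is applied either in $R$ or in a component side of a genuine cut of $R$ (so that any separator found there extends to a separator of $R$); here $R'$ is $R$ minus the interiors of two arbitrary paths, and nothing in the skeleton axioms controls its connectivity. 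Concretely, take the $1$-skeleton from Section~\ref{sec:attach} (a chordless cycle with three pairwise nonadjacent degree-$3$ vertices $t_1,t_2,t_3$, each carrying a limb, all pendant edges labeled $1$), let $P$ be the branch from $x_1=t_1$ to $x_2=t_2$, and let $x_1'$ be the neighbor of $t_1$ on the arc toward $t_3$. Lemma~\ref{l15} may legitimately return $A_1=$ the limb at $t_1$ and $A_2=$ the arc from $t_2$ through $t_3$ followed by the limb at $t_3$. Then in $R'\setminus\{x_1\}$ the vertex $x_1'$ lies in a component (the interior of the arc $t_1t_3$) containing no degree-$1$ vertex at all, so your third path does not exist, yet the skeleton satisfies every axiom, so no contradiction can be derived. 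The lemma is still true for this skeleton --- but only by abandoning $A_2$ and re-routing it.

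That simultaneous re-routing is precisely what the paper does and what your proposal never commits to: the paper applies Lemma~\ref{MengerModified}(2) once, in all of $R$, to the source set $\{x_1',x_1,x_2\}$ and the target collection $\{X,\{y_1\},\{y_2\}\}$ (where $X$ is the set of remaining degree-$1$ vertices, nonempty by~\ref{i:1}). The paths from Lemma~\ref{l15} are used \emph{only} to verify the hypothesis of Lemma~\ref{MengerModified}(2); the three output paths are found afresh, with an arbitrary matching of sources to targets, and need not contain $A_1$ or $A_2$. In the failure case the lemma delivers a two-element cutset of $R$ separating $\{x_1',x_1,x_2\}$ from all degree-$1$ vertices, which directly contradicts~\ref{i:4} since the component containing the branch vertex $x_1$ is neither a chordless path nor contains a degree-$1$ vertex. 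Your closing paragraph gestures at petals and condition~\ref{i:10} to ``reconcile'' the prescribed edge with disjointness, but it never specifies a mechanism that re-routes the already-fixed paths, so the gap stands.
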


\begin{proof}
 By Lemma \ref{l15}, there are vertices $y_1$ and $y_2$ of degree~1 incident with edges with the same label, such that there exist vertex-disjoint  paths from $\{x_1,x_2\}$ to $\{y_1,y_2\}$.  We define $X$ as the set of all vertices of degree 1 in $R$, except $y_1$ and $y_2$.
 Note that $X\neq \emptyset$ by \ref{i:1}.
 We apply Lemma \ref{MengerModified} to
  $\{x_1', x_1, x_2\}$ and $\{X,y_1, y_2\}$.  If the output is three
  vertex-disjoint  paths, then the conclusion of the lemma holds ($x_1$
  needs to be added to the path that starts at $x'_1$).
  Otherwise, there exists a cutset $\{a, b\}$ that
  separates $\{x_1', x_1, x_2\}$ from $\{y_1, y_2\}\cup X$.  This
  contradicts~\ref{i:4}.
\end{proof}

\section{Properties of P-graphs}
\label{sec:Pgraphs}

For a P-graph $B$ with special clique $K$ and skeleton $R$, we use the
following additional terminology.  The cliques of $L(R)$ of size at least 3 are called the
{\em big cliques} of $L(R)$. Note that they correspond to sets of
edges in $R$ that are incident to a vertex of degree at least 3.  We
denote by ${\cal K}$ the set that consists of $K$ and all big cliques
of $L(R)$.  Remove from $B$ the edges of cliques in ${\cal K}$. What
remains are vertex-disjoint  paths, except possibly those that meet at
a vertex of $K$. These paths are {\it segments} of $B$; moreover, a segment is an {\em internal segment} if its
endnodes belong to big cliques of $L(R)$, and otherwise it is a {\em
  leaf segment}. If $S$ is a leaf segment and $u\in K$ is an endnode of $S$, we say that $S$ is a
{\em claw segment} if $S$ is not the only segment with endnode $u$; otherwise we say that $S$ is a {\em clique segment}.
Observe that it is possible that a segment is of
length 0, but then it must be an internal segment.  Two segments
$S_1=s_1\ldots t_1$ and $S_2=s_2\ldots t_2$ are {\em parallel} if
$s_1,t_1,s_2,t_2$ are all distinct nodes and for some
$K_1\in {\cal K}\setminus \{  K\}$, $s_1,s_2\in K_1$ and $t_1,t_2\in K$.
Note also that every two cliques of $B$ meet in at most one vertex
(since $R$ is triangle-free).

\begin{lemma}\label{l2}
  Let $B$ be a graph that satisfies all the conditions of being a
  P-graph except that its skeleton fails to satisfies~\ref{i:5} or~\ref{i:8}. Then
  $B$ contains a wheel.
\end{lemma}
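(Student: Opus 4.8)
The guiding observation is that failing \ref{i:5} or \ref{i:8} leaves $R$ \emph{chordless} (condition~\ref{i:1} is untouched), and for a chordless $R$ the line graph $L(R)$ by itself contains no wheel: every hole of $L(R)$ is the image $L(C)$ of a chordless cycle $C$ of $R$, and an edge-vertex $f=pq$ not on $L(C)$ meets $L(C)$ only in the edges of $C$ incident to $p$ or to $q$, which forces a chord of $C$ as soon as $f$ would have three neighbors on $L(C)$. Hence any wheel we produce must use the special clique $K$, and in both cases the plan is to designate a suitable \emph{non-pendant} edge-vertex as the center (so that it has no neighbor in $K$, which keeps the spoke count under control) and to close a hole around it by rerouting through $K$. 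I would treat the two failures separately.

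Suppose \ref{i:5} fails, so there is an edge $e=uv$ lying on a cycle with $\deg u,\deg v\ge 3$ (this is exactly the situation excluded by Lemma~\ref{l11}). I take the center to be $e$. Using Lemma~\ref{l4} I fix a chordless cycle $C$ through $e$; then $L(C)\setminus\{e\}$ is an induced path (call it the \emph{base arc}) joining the two $C$-edges $a_0$ (at $u$) and $b_0$ (at $v$), both of which are neighbours of $e$. Since $\deg u,\deg v\ge 3$ there are further edges $a_1$ at $u$ and $b_1$ at $v$, and I build a \emph{return arc} from $a_1$ to $b_1$ that leaves $C$ and reconnects through $K$: applying Lemma~\ref{l15} to the branch vertices $u,v$ yields vertex-disjoint paths from $u$ and $v$ to degree-$1$ vertices whose pendant edges carry a common label $\ell$, and I close these two legs through $v_\ell$. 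The union of the base arc and the return arc is a hole $H$ that avoids $e$, and on $H$ the vertex $e$ has exactly the four neighbours $a_0,a_1,b_0,b_1$ (it has none in $K$, being non-pendant), so $(H,e)$ is a wheel.

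Suppose instead \ref{i:8} fails: there is a pendant edge $e=xy$ with $\deg x\ge 3$ and label $i$, together with a second label-$i$ pendant edge $e'$. Again I take $e$ as center; its neighbours include two edges $a,b$ at $x$ (adjacent to each other in $L(R)$) and the clique vertex $v_i$. I aim for a hole $H$ through $a$, $b$ and $v_i$ that avoids $e$, so that $e\sim a,b,v_i$ gives the three spokes. One side runs $a\sim b$; the two ends must be joined through $v_i$, and here I split on $k$. If $k\ge 2$, I let $v_i$'s two neighbours on $H$ be $e'$ (reached from $a$) and a clique neighbour $v_j$ (using $v_i\sim v_j$), with $v_j$ in turn reaching a label-$j$ pendant along a leg from $b$; this needs only the single extra pendant $e'$. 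If $k=1$ then every pendant edge has label $i$ and condition~\ref{i:1} supplies at least three of them, so I can route to two distinct label-$i$ pendants, one from $a$ and one from $b$, recovering the prototypical ``pyramid with a length-$1$ path''. In both subcases the disjoint labelled legs are produced by Lemmas~\ref{l15}, \ref{l:bPyr} and \ref{MengerModified} (with condition~\ref{i:10} controlling which labels are reachable on each side of $x$).

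The routine part is identifying the spokes; the hard part will be verifying that each constructed cycle is genuinely an induced hole of length at least $4$. Concretely, I must guarantee that the rerouting legs are internally disjoint from the base cycle/path and from one another, that the clique vertex used to close them ($v_\ell$, or $v_i$ and $v_j$) is adjacent on $H$ to exactly the two intended pendant vertices and to no others, and that no accidental adjacency in $L(R)$ produces a chord (for instance no interior leg-edge shares an endpoint with $e$, and the two edges at $u$, resp.\ at $v$, meet the two arcs only as planned). To force the disjointness from the base structure I would apply the connectivity lemmas inside an appropriate subgraph, such as $R$ with the interior of the cycle-path $Q$ deleted, rather than in $R$ itself; the existence of the labelled disjoint paths is then exactly what those lemmas deliver, and the remaining no-chord bookkeeping, together with a few degenerate short-length cases (e.g.\ when a leg has length one or an edge at $x$ is itself pendant), constitutes the bulk of the verification.
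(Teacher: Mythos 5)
Your overall frame (any wheel must pass through $K$, since $L(R)$ of a chordless $R$ is wheel-free) and your handling of the failure of~\ref{i:8} are essentially the paper's own: there too the center is the offending pendant edge, the rim uses two other edges at $x$ and is closed through $K$ via the second pendant edge with the same label, and the two disjoint legs come from Lemma~\ref{MengerModified}, whose cut-vertex outcome is refuted by~\ref{i:4}. The genuine gap is in your treatment of the failure of~\ref{i:5}. You fix a cycle $C$ through $e=uv$ \emph{first} and then demand a return arc from the off-cycle edges $a_1$ at $u$ and $b_1$ at $v$ to $K$ that avoids $C$; but Lemma~\ref{l15} gives no control over where its paths run (they may start along $C$ or re-enter it), and, worse, for a fixed $C$ the required legs need not exist at all. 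Concretely: let $R$ be the hexagon $u p_1 p_2 u^* q_1 v u$ with $e=uv$, add a vertex $a$ adjacent to $u$ and $p_2$, and attach limbs of length~$2$ at $p_1$, $q_1$ and $v$, all three pendant edges getting label~$1$ (so $k=1$). All skeleton conditions except~\ref{i:5} hold, and~\ref{i:5} fails at $e$. If you take $C$ to be the hexagon, the only edge at $u$ outside $C\cup\{e\}$ is $ua$, and from $a$ the only continuation is to $p_2\in C$; no return arc disjoint from $C$ exists and your construction cannot be completed. (It can be completed using the other cycle $u\,a\,p_2\,u^*\,q_1\,v$, so everything hinges on which cycle is chosen --- and nothing in your proposal chooses it or proves that a good choice exists.)

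Your suggested repair --- running the connectivity lemmas in $R$ minus the interior of the cycle --- does not close this gap. Lemmas~\ref{l15} and~\ref{l:bPyr} are proved for skeletons, and that subgraph is in general not a skeleton; Lemma~\ref{MengerModified} does apply to arbitrary connected graphs, but its alternative outcome is then a cutset of the \emph{subgraph}, while conditions~\ref{i:3} and~\ref{i:4}, which are what refute such cutsets, constrain only $1$- and $2$-element cutsets of $R$ itself (a cutset of the deleted graph corresponds to a large cutset of $R$, contradicting nothing; in the example above the Menger outcome in the pruned graph is exactly such a harmless cutset). The paper's proof avoids all of this by not fixing the cycle: it applies Menger's theorem to $u$ and $v$ in $R\setminus e$; since $R$ is chordless the outcome is a cut-vertex $u^*$ separating $u$ from $v$, and the rim is built as $L(P_u\cup P_v)$ closed through $K$, where $P_u=u''\dots u\dots u^*$ and $P_v=v''\dots v\dots u^*$ lie on opposite sides of the cut (so disjointness is automatic) and the degree-$1$ ends $u'',v''$ are obtained from~\ref{i:2}, \ref{i:3}, \ref{i:4} applied to genuine small cutsets of $R$. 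In effect the paper constructs the cycle and the escape legs simultaneously, with the cut vertex $u^*$ as the hinge; that simultaneous construction is precisely the idea missing from your write-up, and it is where most of the work in the paper's Case~1 lies.
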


\begin{proof}
  Let $R$ be the skeleton of $B$, and $K$ its special clique.

  \noindent{\bf Case 1:} when $R$ fails to satisfy \ref{i:5}.  Suppose
   that in $R$ there exists an edge $e=xy$
  contained in a cycle $C$ such that $x$ and $y$ are both of degree at
  least 3.  If in $R\sm e$, there are two internally vertex-disjoint paths from
  $x$ to $y$, then $R$ contains a cycle with a chord (namely $e$).  So in
  $L(R)$, $e$ is a vertex that is the center of a wheel.  Hence, by
  Menger's theorem, we may assume that in $R\sm e$, there is a
  cut vertex $u$ that separates $x$ from $y$  (note that $u$ is on $C$).  Let $X$ (resp.\ $Y$) be
  the connected component of $R\sm \{e, u\}$ that contains
  $x$ (resp.\ $y$).    We claim that in $R[X\cup \{ u\}]$ there exists a
  path $P_x = x'' \dots x \dots u$ such that $x''$ has degree 1 in
  $R$.

  If $x$ is a cut vertex  of $R$, $P_x$ can be constructed as the union
  of a path from $x$ to $u$ going through the component of $R \sm x$
  that contains $u$, and a path from a vertex $x''$ of degree 1 (that
  exists by~\ref{i:3}) to $x$ going through another component.  So, we
  assume that $x$ is not a cut vertex  of $R$.  Hence, from here on, we
  assume that $R\sm x$ is connected.

  We observe that $\{x, u\}$ is a cutset of $R$, which separates $y$ from each neighbor of $x$ distinct from $y$.  We define $u'$ as
  the vertex of $C\sm e$ closest to $x$ along $C\sm e$ such that
  $\{x, u'\}$ is a cutset of $R$ that separates $y$ from each neighbor of $x$ distinct from $y$.  Let $x'$ be a neighbor of $x$ not in $C$
  (this exists since $x$ has degree at least~3 by assumption). Since $x$ is not a cut vertex of
  $R$, $u'\neq x$.    Let
  $X'$ be the connected component of $R\sm \{x, u'\}$ that contains $x'$. Suppose $xu'\in E(R)$.
  Since $R\setminus x$ is connected there is a path $P$ from $x'$ to $u'$ in $X'\cup\{u'\}$.
  Together with $C$ this provides a cycle with a chord (namely $xu'$), which yields a wheel in $B$. So, $xu'\not\in E(R)$. Let $X_c$ be the connected component of $R\sm \{x, u'\}$
  that contains the vertices from $C\sm e$ that are between $x$ and
  $u'$ (possibly, $X' = X_c$). Note that the vertices of $C\setminus e$ that are
  between $u'$ and $u$  are in the same connected component of $R\sm \{x, u'\}$ as $y$, so none of them is in $X'\cup X_c$. In $R[X' \cup X_c \cup \{x, u'\}]$
  there are two internally vertex-disjoint paths $Q_1$ and $Q_2$ from $x$ to $u'$, for otherwise, by Menger's theorem, a vertex $u''$ from $C$ separates them, and $\{x, u''\}$ is a cutset that
  contradicts $u'$ being closest to $x$ ($\{x,u''\}$ also separates $y$ from each neighbor of $x$ distinct from $y$).  Note that
  $X'\cup \{u', x\}$ or $X_c \cup \{u', x\}$ is not a chordless path, since otherwise
  they induce parallel branches contradicting~\ref{i:2}. Therefore, by \ref{i:4} one of them contains a vertex $x''$ of degree~1.  So,  there
  exists a cycle $C'$ (made of $Q_1$ and $Q_2$) in $R[X' \cup X_c \cup \{x, u'\}]$, and
  a minimal path in $R[X'\cup X_c]$ from $x''$ to a vertex in $C'$.
  This proves that a path visiting in order $x''$, $x$ and $u'$
  exists. We build $P_x$ by extending this path to $u$ along $C\setminus e$.

  We can build a similar path $P_y$.  In $B$, the paths $P_x$ and
  $P_y$ can be  completed to a wheel via $K$ ($e$ is the
  center of this wheel).

  \noindent{\bf Case 2:} when $R$ fails to satisfy~\ref{i:8}.  Suppose
  for a contradiction that some edge $xx'$ of $R$ has label~1, where
  $x$ has degree at least~3 and $x'$ degree 1.  Suppose moreover that
  another edge of $R$, say $yy'$ where $y'$ has degree 1, also receives
  label 1.  Let $Z$ be set of all degree~1 vertices of $R$, except
  $x'$ and $y'$.  We claim that in $R$, there exist two vertex-disjoint
  paths $P_y = x'' \dots y'$ and $P_z = x''' \dots z$, where $z\in Z$ and $x'',x'''$ are some neighbors of $x$ different
  from $x'$.  For otherwise, by Lemma \ref{MengerModified}, there exists a cut vertex $u$ in $R$ that separates $\{x'',x'''\}$ from
  $Z \cup \{y'\}$. Then $\{ u,x'\}$ is a cutset of $R$ such that the connected component
  $C$ of $R\setminus \{ u,x'\}$ that contains $x$ fails to satisfy~\ref{i:4}. Additionally, we may assume that $P_y$ (resp.\ $P_z$) does not contain $x$, since otherwise instead of $P_y$ (resp.\ $P_z$)
  we can take the subpath of $P_y$ (resp.\ $P_z$) from a neighbor of $x$ (on this path) to $y'$ (resp.\ $z$). Now, in $B$, the two paths $P_y$ and $P_z$ together with $x$ and vertices from $K$
  yield a hole, that is the rim of a wheel centered at the vertex
  $xx'$ of $L(R)$.
\end{proof}

\begin{lemma}\label{l3}
  Every P-graph is (theta, wheel, diamond)-free.
\end{lemma}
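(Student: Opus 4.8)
The plan is to show that a P-graph $B$ contains none of a theta, a wheel, or a diamond, by analyzing how any such forbidden configuration would have to sit inside the structure of $B = L(R) \cup K$. The key structural facts I would use throughout are: (i) the cliques of $B$ of size at least $3$ are exactly the members of $\mathcal K$ (the special clique $K$ together with the big cliques of $L(R)$, which correspond to the high-degree vertices of $R$); (ii) removing the edges of the cliques in $\mathcal K$ leaves a disjoint union of paths (the segments); and (iii) since $R$ is triangle-free, any two cliques of $B$ meet in at most one vertex. I would first dispose of the \emph{diamond}: a diamond contains two triangles sharing an edge, hence a $K_4$ minus an edge whose two degree-$3$ vertices would have to lie in a common clique of $\mathcal K$; using that $R$ is triangle-free and that distinct members of $\mathcal K$ share at most one vertex, a diamond forces two cliques of $\mathcal K$ to overlap in an edge, a contradiction. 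This is the easy warm-up.

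The bulk of the argument concerns \emph{wheels} and \emph{thetas}. My approach is to observe that any hole $H$ of $B$ projects into $R$. A hole uses at most two vertices of each clique of $\mathcal K$ (three consecutive vertices of a clique would give a chord), so the trace of a hole in $L(R)$ corresponds, via the line-graph correspondence, to a cycle of $R$ or to a path of $R$ between two degree-$1$ vertices that is ``closed up'' through the special clique $K$ (when the hole passes through $K$). I would make precise the dictionary between holes of $B$ and closed walks in $R$: a hole either lives entirely inside one blob (coming from a cycle of $R$, and then $R$ chordless forces the hole to be chordless, fine) or it threads through several segments and cliques. Given a candidate wheel center $c$, I split into cases according to whether $c \in K$ or $c$ is a vertex of $L(R)$ (an edge $e$ of $R$). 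If $c = e \in L(R)$ has three neighbors on the hole, those neighbors are edges of $R$ meeting $e$, all incident to one endpoint of $e$ of degree $\geq 3$; tracking how the rim must then re-enter the neighborhood of $e$ would produce a cycle of $R$ with a chord, contradicting that $R$ is chordless, or would violate the sparsity condition~\ref{i:5}. If $c \in K$, its neighbors are pendant vertices carrying the label of $c$, and condition~\ref{i:8} (each high-degree-incident pendant label is unique) is exactly what prevents three rim-neighbors from coexisting.

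For the \emph{theta}, I would use the characterization that a theta is a $3PC(a,b)$ with three internally disjoint paths of length $\geq 2$ and no extra edges. Translating to $R$: the two branch vertices $a,b$ of the theta, together with the three paths, would yield in $R$ either a theta-like subgraph or a configuration forcing a chord in $R$. Since $R$ is chordless, $R$ itself contains no theta of the relevant type with extra structure, and the only way to ``create'' a theta in $B$ that is not already visible in $R$ is by using the special clique $K$ to glue path-ends together; here conditions~\ref{i:7}--\ref{i:11} on the labels, and in particular the petal condition~\ref{i:10}, control which pendant vertices can be identified through $K$, ruling out the three-disjoint-path pattern with no chords. The natural engine for producing the required disjoint paths (or the separating cutsets that give contradictions via \ref{i:3}, \ref{i:4}) is Lemma~\ref{MengerModified} together with Lemmas~\ref{l4}, \ref{l12}, \ref{l15}, which I would invoke to realize rim or path segments as genuine vertex-disjoint paths in $R$ ending at degree-$1$ vertices.

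The main obstacle I anticipate is the sheer number of cases in the wheel part: one must control simultaneously the position of the center (in $K$, or an edge of $R$ incident to a branch vertex, or a degree-$2$ edge), and the way the rim interacts with the cliques of $\mathcal K$ and the special clique. The delicate sub-case is a center $c \in L(R)$ corresponding to an edge incident to a degree-$\geq 3$ vertex $v$ of $R$, where three rim-neighbors of $c$ would be three edges at $v$; closing the rim back up then either reuses $v$'s clique (forcing a chord, i.e.\ violating chordlessness or~\ref{i:5}) or routes through $K$, where the label conditions~\ref{i:8} and~\ref{i:10} must be brought to bear. Getting the bookkeeping right so that every attempted wheel or theta yields either a chord in $R$ (contradicting that $R$ is chordless), a forbidden clique overlap (contradicting triangle-freeness), or a cutset contradicting~\ref{i:3}/\ref{i:4}, is where the real work lies; the preceding lemmas of Section~\ref{sec:conn} are tailored precisely to supply the disjoint paths needed to force these contradictions.
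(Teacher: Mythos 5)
There is a genuine gap here: your proposal is a strategy whose decisive steps are explicitly deferred (``getting the bookkeeping right \dots is where the real work lies''), and the concrete mechanisms you do commit to are not the right ones. The key idea you are missing is the \emph{claw-center} observation, which is what makes the paper's proof short. In a P-graph no vertex of $L(R)$ can be the center of a claw: the neighborhood of a vertex of $L(R)$ is covered by the two cliques coming from the endpoints of the corresponding edge of $R$, and a pendant vertex is simplicial in $L(R)$ with a unique neighbor in $K$. Hence every claw center lies in $K$, so any two claw centers are adjacent. This kills the theta in one line: the two ends of a theta are non-adjacent centers of claws. Your theta plan instead routes through the label conditions \ref{i:7}--\ref{i:11} and the Menger-type lemmas of Section~\ref{sec:conn}, asserting that the petal condition~\ref{i:10} ``rules out the three-disjoint-path pattern''; no argument is given, and in fact those conditions are not what excludes thetas --- theta-freeness follows from the construction alone, with no use of the labelling axioms.

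The wheel analysis has the same problem in sharper form. For a center $c\in K$ you claim that condition~\ref{i:8} ``is exactly what prevents three rim-neighbors from coexisting,'' but \ref{i:8} only restricts pendant edges having an endpoint of degree at least $3$: already in a long pyramid (where $K=\{y\}$ and all three pendant edges carry label $1$) the vertex $y$ has three pendant neighbors, so \ref{i:8} cannot be the obstruction. The correct reason is again simpliciality: a pendant vertex with label $i$ has all of its neighbors in $B\setminus v_i$ pairwise adjacent, hence lies on no hole of $B\setminus v_i$; so the rim of a wheel centered at $v_i$ could meet $N(v_i)$ only inside $K$, giving at most two rim-neighbors. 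For a center $x$ in $L(R)$, the paper first uses diamond-freeness and \ref{i:8} to show no rim-neighbor of $x$ lies in $K$, concluding that both endpoints of the edge $e_x$ have degree at least $3$ in $R$; then \ref{i:5} forces $e_x$ to lie on no cycle of $R$, so $e_x$ is a cut edge, $x$ is a cut vertex of $L(R)$, and the rim must pass through the clique $K$ on both sides of the cut, producing a chord. This cut-edge argument and the forced chord through $K$ are precisely the content your sketch leaves as bookkeeping; without them, and without the claw-center lemma, the proposal does not constitute a proof. (Your diamond argument is essentially sound, but note that it too silently needs \ref{i:8}/\ref{i:9}: one must exclude a triangle consisting of $v_i\in K$ and two adjacent pendant vertices of $L(R)$, which requires that adjacent pendant edges receive distinct labels.)
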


\begin{proof}
  Let $B$ be a P-graph with skeleton $R$ and special clique $K$. By
  construction of $B$, none of the vertices of $L(R)$ can be centres
  of claws in $B$. So all centres of claws of $B$ are contained in $K$
  and are therefore pairwise adjacent. It follows that $B$ is
  theta-free.  Since $R$ is triangle-free and pendant vertices of
  $L(R)$ have unique neighbors in $K$, and by \ref{i:8}, $B$ is
  diamond-free.

  Suppose that $B$ contains a wheel $(H,x)$.  If $x\in K$ then some
  neighbor $x_1$ of $x$ in $H$ does not belong to $K$, and hence is a
  pendant vertex of $L(R)$.  It follows that the neighborhood of $x_1$ in $L(R)$ is a clique and that $x_1$ has a unique neighbor in $K$. But this contradicts the
  assumption that $x_1$ belongs to the hole $H$ of $B\setminus
  x$. Therefore, $x\not\in K$.

  Since $x$ is a vertex of $L(R)$, it cannot be a center of a claw in
  $B$. Since $B$ is diamond-free, $x$ has neighbors $x_1,x_2,x_3$ in
  $H$, where $x_2x_3$ is an edge and $x_1x_2$ and $x_1x_3$ are
  not. Let $x_1'$ and $x_1''$ be the neighbors of $x_1$ in $H$. Note
  that $x$ has no neighbor in $H\setminus \{ x_1,x_1',x_1'',x_2,x_3\}$
  and it is adjacent to at most one vertex of $\{ x_1',x_1''\}$.

Suppose $x_1\in K$. Then w.l.o.g.\ $x_1'\not\in K$. But then $x_1'$ and $x$ are pendant vertices of $L(R)$ that have
the same labels. Since $\{ x,x_2,x_3\}$ induce a triangle in $L(R)$, $x$ corresponds to a pendant edge of $R$
whose one endnode is of degree at least 3, contradicting \ref{i:8}.
Therefore $x_1\not\in K$, and hence it cannot be a center of a claw. Without loss of generality it follows that the neighbors of $x$ in $H$
are $x_1',x_1,x_2,x_3$ and none of them is in $K$. In particular, $x$ is not a pendant vertex of $L(R)$.

Let $e_x$ be the edge of $R$ that corresponds to vertex $x$ of $L(R)$. Note that the endnodes of $e_x$ are of
degree at least 3 in $R$. So by \ref{i:5}, $e_x$ cannot be contained in a 2-connected block of $R$.
It follows that $x$ is a cut vertex of $L(R)$. Let $C_1$ and $C_2$ be connected components of $L(R)\setminus x$.
Then w.l.o.g.\ $x_1',x_1\in C_1$ and $x_2,x_3\in C_2$, and every path in $B\setminus x$ from $\{ x_1',x_1\}$ to
$\{ x_2,x_3\}$ must go through $K$. It follows that $H$ must have a chord, a contradiction.
\end{proof}

\begin{lemma}
  \label{l:clawHole}
  If $B$ is a P-graph with special clique $K = \{v_1, \dots, v_k\}$
  and $v$ a vertex of an internal segment of $B$, then there exists a
  hole $H$ in $B$ that contains $v$, some vertex $v_i\in K$ and two
  neighbors of $v_i$ in $B\sm K$.
\end{lemma}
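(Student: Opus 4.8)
The plan is to push everything down to the skeleton $R$ and then assemble the hole from a single path of $R$. Since $v$ lies on an \emph{internal} segment, $v$ is the line-graph vertex of an edge $e_v$ lying on a branch $P = x_1\dots x_2$ of $R$, where $x_1\ne x_2$ are branch vertices and every interior vertex of $P$ has degree~$2$. I intend to build a hole using exactly one vertex $v_i$ of $K$ together with two pendant vertices of $L(R)$ of label~$i$; these pendant vertices are the line-graph vertices of two pendant edges of $R$ with the same label~$i$. Thus it suffices to produce a path $W$ in $R$ running from a degree-$1$ vertex $y_1$ to a degree-$1$ vertex $y_2$, whose incident (pendant) edges $f_1,f_2$ both carry a common label~$i$, and that passes through $e_v$.

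The reason this reduction works is the elementary fact that the line graph of a path is an induced path: if $W=w_0w_1\dots w_n$ has distinct vertices, then two of its edges are adjacent in $L(R)$ only when consecutive on $W$, so the edges of $W$ induce a chordless path $f_1\dots f_2$ in $L(R)$. Adding $v_i$ closes this into a cycle $v_i f_1\dots f_2 v_i$. To see it is a hole, note that the only neighbors of $v_i$ among the edges of $W$ are $f_1$ and $f_2$: every interior edge of $W$ has both endpoints of degree $\ge 2$ (a degree-$1$ vertex of $R$ can only be an endpoint of $W$), hence is not pendant and not adjacent to any vertex of $K$. Since $W$ starts and ends at degree-$1$ vertices distinct from $x_1,x_2$ and traverses the branch $P$, its length is at least~$3$; therefore $f_1=\epsilon_1$ and $f_2=\epsilon_n$ are non-consecutive edges of $W$, so they share no vertex and are non-adjacent in $L(R)$, and the cycle has length at least~$4$. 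As $e_v$ is an edge of $W$, we get $v\in H$.

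It remains to build $W$. I would apply Lemma~\ref{l15} to the branch vertices $x_1,x_2$: it yields vertex-disjoint paths $P_1=x_1\dots y_1$ and $P_2=x_2\dots y_2$ ending at degree-$1$ vertices $y_1,y_2$ whose pendant edges share a label~$i$. The key point is that $P_1$ and $P_2$ avoid the interior of $P$. Indeed, since every interior vertex of $P$ has degree~$2$, any path meeting the interior of $P$ must enter through $x_1$ or $x_2$ and then traverse all of $P$ to the other endpoint; so if $P_1$ met the interior of $P$ it would contain $x_2$, contradicting disjointness from $P_2$, and symmetrically for $P_2$. Hence $P_1\cap P=\{x_1\}$ and $P_2\cap P=\{x_2\}$, so the concatenation $W = y_1\,P_1\,x_1\,P\,x_2\,P_2\,y_2$ is a single path of $R$ with distinct vertices that passes through $e_v$, exactly as needed.

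The main obstacle is precisely this gluing step: a priori the paths supplied by Lemma~\ref{l15} could re-enter the branch $P$ or create chords with it, which would destroy both the path structure and the chordlessness of the resulting cycle. Two observations neutralize this. First, the line-graph-of-a-path remark removes any concern about chords inside $L(R)$, so I never need $W$ to be an \emph{induced} path of $R$, only a path with distinct vertices. Second, the degree-$2$ structure of branch interiors forces $P_1$ and $P_2$ to stay off the interior of $P$, which is exactly what guarantees that the three pieces glue into one genuine path.
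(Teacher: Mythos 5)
Your proof is correct and follows essentially the same route as the paper's: view $v$ as an edge on a branch of the skeleton with ends $x_1,x_2$, invoke Lemma~\ref{l15} to get disjoint paths to two degree-1 vertices whose pendant edges share a label $i$, and take the hole induced by $v_i$ together with the line graph of the union of the two paths and the branch. The only difference is that you spell out the details the paper leaves implicit (that the three pieces concatenate into a genuine path of $R$, and that its line graph plus $v_i$ is chordless of length at least 4), which is a faithful elaboration rather than a different argument.
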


\begin{proof}
  We view $v$ as an edge of the skeleton $R$ of $B$.  The edge $v$
  belongs to a branch of $R$ with ends $x_1$ and $x_2$.  Let
  $P_1 = x_1\dots y_1$ and $P_2 = x_2 \dots y_2$ be the two paths
  whose existence is proved in Lemma~\ref{l15} applied to $x_1$ and
  $x_2$.  Let $i$ be the label of edges incident to $y_1$ and $y_2$.  The hole whose
  existence is claimed is induced by $v_i$ and the line graph of the union of  $P_1$, $P_2$, and the
  branch of $R$ from $x_1$ to $x_2$.
\end{proof}

\begin{lemma}
  \label{l:clawCliqueHole}
  Let $B$ be a P-graph with special clique $K = \{v_1, \dots, v_k\}$.
  Let $K_1, K_2, K_3 \in {\cal K}\sm \{ K\}$ be three distinct big cliques.
  Then there exist three paths $P_1 = v \dots u_1$,
  $P_2 = v \dots u_2$ and $P_3 = v \dots u_3$, vertex-disjoint except
  at $v$, with no edges between them (except at $v$), such that
  $v\in K$ and for $i\in \{ 1,2,3\}$, $K_i\cap P_i=\{ u_i\}$.
\end{lemma}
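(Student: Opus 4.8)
The plan is to pass to the skeleton $R$ of $B$ and reduce the statement to a connectivity problem solved by the Menger-type Lemma~\ref{MengerModified}. Recall that each big clique $K_i\in\mathcal K\sm\{K\}$ consists of the edges of $R$ incident to a single branch vertex $w_i$ of $R$ (a vertex of degree at least~3), and since $K_1,K_2,K_3$ are distinct, so are $w_1,w_2,w_3$. A path $P_i$ in $B$ from a vertex $v=v_j\in K$ to $K_i$ begins at $v_j$ with a pendant vertex of label $j$ and then follows a path of $L(R)$; it therefore corresponds to a path $Q_i$ of $R$ from a degree-$1$ vertex (the pendant end of a pendant edge of label $j$) to $w_i$. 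Since vertex-disjoint paths of $R$ are automatically edge-disjoint and no two of their edges share an end, their images in $L(R)$ are vertex-disjoint and pairwise non-adjacent. The key point is thus to produce three vertex-disjoint paths of $R$, one from each $w_i$ to a degree-$1$ vertex, with the labels of the three pendant ends chosen so that they attach to a common vertex of $K$. First I would apply Lemma~\ref{l15} to $w_2$ and $w_3$, obtaining vertex-disjoint paths $Q_2=w_2\dots z_2$ and $Q_3=w_3\dots z_3$ ending at degree-$1$ vertices $z_2,z_3$ whose pendant edges carry a common label $j$.

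I then set $W_1$ to be the set of all degree-$1$ vertices of $R$ other than $z_2,z_3$ (nonempty by~\ref{i:1}), and apply Lemma~\ref{MengerModified}(2) with $v_1,v_2,v_3:=w_1,w_2,w_3$ and target sets $W_1,\{z_2\},\{z_3\}$; its hypothesis is exactly the pair $Q_2,Q_3$ just produced. If the outcome is a separating pair $\{u_1,u_2\}$ (possibly a single vertex), then $\{u_1,u_2\}$ separates $\{w_1,w_2,w_3\}$ from the set of \emph{all} degree-$1$ vertices of $R$. Since three vertices cannot all lie in a two-element set, some $w_a$ lies in a component $C$ of $R\sm\{u_1,u_2\}$ containing no degree-$1$ vertex; by~\ref{i:3} (single-vertex case) or by~\ref{i:4} (two-vertex case), $R[C\cup\{u_1,u_2\}]$ must then be a chordless path, forcing every vertex of $C$, and in particular the branch vertex $w_a$, to have degree $2$ in $R$ --- a contradiction.

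Hence the outcome is a triple of vertex-disjoint paths $Q_1,Q_2,Q_3$, where $Q_a$ joins $w_a$ to a degree-$1$ vertex (after truncating at $w_a$ if necessary we take $w_a$ to be an endpoint), and where two of the three pendant ends (those landing in $\{z_2\}$ and $\{z_3\}$) carry the common label $j$. Taking $v:=v_j$, I would route the two paths whose ends have label $j$ directly out of $v_j$ through their pendant vertices, and route the third path (whose end has some label $l$) directly if $l=j$, and otherwise through the edge $v_jv_l$ of the clique $K$ first. Translating the $Q_a$ into $L(R)$ and prepending these initial segments yields $P_1,P_2,P_3$; each $P_i$ ends at the unique edge of $Q_i$ incident to $w_i$, which lies in $K_i$, and since the $Q_a$ are vertex-disjoint, no vertex of $P_l$ ($l\neq i$) is incident to $w_i$, so $K_i\cap P_i=\{u_i\}$ and $K_i\cap P_l=\emptyset$.

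The main obstacle is the label bookkeeping in this last step and the verification that the three $B$-paths meet only at $v$ with no other edges between them. One must check that at most one path may pass through a second vertex $v_l$ of $K$ --- two such vertices would be adjacent in $K$, creating a forbidden edge off $v$ --- which is precisely why Lemma~\ref{l15} is invoked to force two of the three ends to share the label $j$; and one must check that $v_l$ is non-adjacent to the interiors of the other two paths, which holds because those interiors meet the pendant vertices only in label $j\neq l$ (here~\ref{i:8} guarantees that, $j$ being used at least twice, no label-$j$ pendant edge is incident to a branch vertex, so each $v_j$-neighbour appears only as a path endpoint). The remaining disjointness and non-adjacency claims then follow directly from the vertex-disjointness of $Q_1,Q_2,Q_3$ and the line-graph correspondence.
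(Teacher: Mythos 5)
Your proposal is correct and follows essentially the same route as the paper's proof: apply Lemma~\ref{l15} to two of the three branch vertices to get disjoint paths ending at degree-$1$ vertices with a common label, then apply Lemma~\ref{MengerModified}(2) to all three branch vertices with targets $\{W_1,\{z_2\},\{z_3\}\}$, rule out the separator outcome via conditions~\ref{i:3}/\ref{i:4}, and lift the three disjoint paths to $L(R)$, attaching them to one vertex of $K$ (or two, joined by a clique edge) according to the labels. The only differences are cosmetic --- which pair of branch vertices receives Lemma~\ref{l15}, and your more explicit verification of the separator contradiction and of the final adjacency bookkeeping, which the paper leaves implicit.
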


\begin{proof}
 Each of the cliques $K_1, K_2$ and $K_3$
  is a set of edges from $R$ that share a common vertex.  This defines
  three branch vertices $x_1$, $x_2$ and $x_3$ in $R$. By Lemma \ref{l15} there are vertex-disjoint  paths from $\{x_1,x_2\}$ to $\{y_1,y_2\}$, where $y_1$ and $y_2$ are two vertices of $R$ incident with edges that have the same label say 1.   We denote by $X$ the set of
  all the vertices of degree 1 from $R$ different from $y_1$ and $y_2$ ($X$ is
  not empty by (i)).    We now apply Lemma \ref{MengerModified} to $\{x_1, x_2, x_3\}$ and $\{X,y_1, y_2\}$.  If three
  vertex-disjoint paths exist (up to a permutation, say
  $Q_1= x_1 \dots y_1$, $Q_2= x_2 \dots y_2$ and $Q_3 = x_3 \dots y_3$,
  where $y_3 \in X$ and w.l.o.g.\ $y_3$ has label 1 or 2), then we are
  done.  Indeed, in $L(R)$, this yields three chordless paths with no
  edges between them, ending at three vertices with labels 1, 1, 1 or
  1, 1, 2.   By adding $v_1$ or $v_1, v_2$, we obtain the three paths
  whose existence is claimed.

  We may therefore assume that the outcome of Lemma \ref{MengerModified} is a
  set $C$ of at most two vertices that separates $\{x_1, x_2, x_3\}$
  and $X\cup\{y_1, y_2\}$.   This contradicts~\ref{i:3} or \ref{i:4}.
\end{proof}

\begin{lemma}
  \label{l:clawSC}
  Let $B$ be a P-graph with special clique
  $K = \{v_1, \dots, v_k\}$.  Let $S$ be a leaf segment of $B$, whose
  ends are in $K$ and in $K_2 \in {\cal K}\sm \{ K\} $. Let $K_1 \neq K_2$
  be a clique in ${\cal K}\sm \{ K\}$.  Then there exist three paths
  $P_1 = v \dots u_1$, $P_2 = v \dots u_2$ and $P_S = v \dots u_S$,
  vertex-disjoint except at $v$, with no edges between them (except at
  $v$ and for one edge in $K_2$), such that $v\in K$,
  $u_S$ is the endnode of $S$ in $K_2$, and for $i\in \{ 1,2\}$,
  $K_i \cap P_i=\{ u_i\}$.
    Moreover, $P_S =S$ or
  $P_S\sm v = S$.
\end{lemma}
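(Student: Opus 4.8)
I need to prove Lemma~\ref{l:clawSC}, which is a variant of Lemma~\ref{l:clawCliqueHole} where one of the three target cliques is replaced by a prescribed leaf segment $S$ whose ends lie in $K$ and in $K_2$. Let me understand the structure.

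We have:
- A P-graph $B$ with special clique $K = \{v_1, \ldots, v_k\}$
- A leaf segment $S$ with one endpoint in $K$ and one in $K_2 \in \mathcal{K} \setminus \{K\}$
- Another big clique $K_1 \neq K_2$ in $\mathcal{K} \setminus \{K\}$

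I want three paths $P_1, P_2, P_S$ from a vertex $v \in K$, vertex-disjoint except at $v$, with no edges between them (except at $v$ and one edge in $K_2$), where $P_S$ is essentially $S$ (either $P_S = S$ or $P_S \setminus v = S$).

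**Translating to the skeleton.** $K_1$ and $K_2$ correspond to branch vertices $x_1, x_2$ in the skeleton $R$. The leaf segment $S$ corresponds to a limb in $R$ ending at some degree-1 vertex whose pendant edge has a certain label, say label $\ell$. Since $S$ ends in $K_2$, its other end is at the branch vertex $x_2$.

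Let me think about what the previous lemma gives and adapt.

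---

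Since Lemma~\ref{l:clawSC} is very close in spirit to Lemma~\ref{l:clawCliqueHole}, my plan is to mirror that proof while forcing one of the three paths to follow the prescribed leaf segment $S$.

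\textbf{Setting up in the skeleton.} I would view $K_1$ and $K_2$ as the sets of edges of $R$ incident to two branch vertices $x_1$ and $x_2$ respectively, so that finding the three required paths in $B$ reduces to finding three suitable vertex-disjoint paths in $R$ whose line graphs, together with appropriate vertices of $K$, give $P_1, P_2, P_S$. The leaf segment $S$ corresponds to a limb $L$ of $R$ that has $x_2$ as its branch-vertex endnode and a degree-$1$ vertex $w$ as its other endnode; let $\ell$ be the label of the pendant edge at $w$. The path $P_S$ is forced to be (the line graph of) this limb $L$, with possibly the vertex $v_\ell \in K$ prepended; this is exactly what the ``$P_S = S$ or $P_S \setminus v = S$'' conclusion records, according to whether we include $v_\ell$.

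\textbf{Finding the other two paths.} With $P_S$ fixed as $L$, I must produce the path $P_1$ ending in $K_1$ and the path $P_2$ ending in $K$ (which is the end of $S$ in $K$), together forming a claw-like configuration at a common vertex $v \in K$. As in Lemma~\ref{l:clawCliqueHole}, the common vertex $v$ will be a vertex $v_i$ of the special clique $K$ reached by pendant edges of the same label. The plan is: first invoke Lemma~\ref{l15} applied to $x_1$ and $x_2$ to obtain two vertex-disjoint paths from $\{x_1,x_2\}$ to two degree-$1$ vertices $y_1, y_2$ incident with edges of a common label; then set $X$ to be the set of degree-$1$ vertices of $R$ distinct from $y_1$, $y_2$, and from $w$, and apply Lemma~\ref{MengerModified} to produce the third path, avoiding the limb $L$ already committed to $P_S$. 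The resulting three paths in $R$ end at pendant edges using at most two labels, so adding the corresponding one or two vertices of $K$ yields the three paths $P_1, P_2, P_S$ sharing only the vertex $v$, with no edges among them except at $v$ and the single edge inside $K_2$ where $P_1$ (or $P_2$) and $P_S$ both meet the branch vertex $x_2$.

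\textbf{The main obstacle.} The delicate point is ensuring that $P_S$ can be forced to coincide with the given segment $S$ rather than with some other limb at $x_2$, while simultaneously keeping the three paths vertex-disjoint except at $v$ and edge-disjoint except inside $K_2$. Concretely, I must be careful that the Menger/Lemma~\ref{MengerModified} application routes the two free paths away from the interior of $L$ and away from $x_2$ except through the prescribed clique edge; the cleanest way is to delete the interior of $L$ (keeping $x_2$) before applying the fan/separation machinery, and then argue that any separating cutset of size at most two returned by Lemma~\ref{MengerModified} would contradict~\ref{i:3} or~\ref{i:4} of the skeleton definition, exactly as in the proof of Lemma~\ref{l:clawCliqueHole}. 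Verifying that the labels used are at most two---so that at most two vertices of $K$ are needed and $P_1, P_2$ attach to a common $v$---follows from the label bookkeeping of~\ref{i:7} and~\ref{i:10}, just as in the earlier lemma.
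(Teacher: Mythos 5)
Your setup (passing to the skeleton, fixing $P_S$ as the limb $L$ of $S$, and handling Menger cutsets via~\ref{i:4}) matches the paper, but there is a genuine gap at the heart of the argument: you never guarantee that the two free paths end at pendant edges whose labels are compatible with the label $\ell$ of the pendant edge of $L$. Your plan applies Lemma~\ref{l15} to get $y_1,y_2$ with a common label $j$, and then a Lemma~\ref{MengerModified} application toward $\{X,y_1,y_2\}$ (with $X$ the remaining degree-$1$ vertices). But such an application returns one path to $\{y_1,y_2\}$ (label $j$) and one path to $X$ (an arbitrary label $m$), so the three path-ends can carry three pairwise distinct labels $\ell, j, m$. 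In that situation no vertex $v\in K$ can serve as the common apex: attaching the three paths to $K$ forces them through the distinct vertices $v_\ell, v_j, v_m$, which are pairwise adjacent inside $K$, creating edges between the paths that are neither at $v$ nor in $K_2$. Your assertion that ``at most two labels'' follows from~\ref{i:7} and~\ref{i:10} is not correct; in Lemma~\ref{l:clawCliqueHole} that property comes from the specific choice of terminal sets (two of the three targets are $y_1,y_2$ with equal labels), not from those axioms, and here one of the three ends is frozen at label $\ell$, so the same trick does not transfer.

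What is missing is exactly the case distinction the paper makes on the multiplicity of $\ell$. If $\ell$ is used only once, then Lemma~\ref{l15} alone suffices: its two paths end at a common label $j$, which must differ from $\ell$ (both cannot end at the unique $\ell$-edge), hence they automatically avoid the interior of $L$, and one takes $v=v_j$ with $P_S=v_jS$ (this also shows your worry about routing around $L$ dissolves in this case, with no Menger step needed). If instead $\ell$ is used at a second degree-$1$ vertex $y\neq w$, then one must apply Lemma~\ref{MengerModified} to $\{x_1,x_2\}$ and $\{X,y\}$, \emph{targeting $y$ specifically}, so that one path is guaranteed to end at label $\ell$; then $v=v_\ell$, $P_S=S$, and the other path may end at any label $m$, reached via $v_\ell v_m$. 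Without this split, your construction can fail as described. (Two smaller issues: ``apply Lemma~\ref{MengerModified} to produce the third path'' is ill-posed, since there are only two source vertices $x_1,x_2$ and the third path is the limb itself; and deleting the interior of $L$ before invoking Lemma~\ref{l15} is not legitimate, since that lemma is stated for skeletons and $R$ minus a limb interior need not be one --- the paper instead keeps $R$ intact and excludes $w$ from the terminal sets, which forces avoidance of $L$ for free.)
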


\begin{proof}
  In skeleton $R$ of $B$, the segment $S$ corresponds to limb with
  a pendant edge $e_S$.  Each of
  the cliques $K_1$ and $K_2$ is a set of edges from $R$ that share a
  common vertex.  This defines two vertices $x_1$ and $x_2$ in $R$.

  We suppose first that $e_S$ has a label that is used only once in
  the skeleton $R$.   We apply Lemma~\ref{l15} to $x_1$ and $x_2$.
  This yields paths $P_1$ and $P_2$ that have pendant edges with the same label, say 1.
  Then $S$, line graphs of $P_1$ and $P_2$ and vertex $v_1$, give the desired three paths.

  We now suppose that the label of $e_s$, say 1, is used for another  pendant edge
  with a vertex $y$ of degree 1.  We denote by $X$ the set of all
  degree 1 vertices of $R$, except $y$ and the end of $e_s$.  We
  apply Lemma \ref{MengerModified} to $\{x_1, x_2\}$ and $\{X,y\}$.  If two
  paths are obtained, note that they do not intersect $S$ (because $S$
  is a limb), so by adding $S$ to corresponding paths in $B$, we obtain the paths that we need.
  Otherwise, we obtain a cut vertex, that together with any vertex of
  $S$ yields a cutset of size 2 that contradicts~\ref{i:4}.
\end{proof}

\begin{lemma}
  \label{l:conn4segm}
  Let $B$ be a P-graph with  special clique
  $K = \{v_1, \dots, v_k\}$ such that $k\geq 2$.  Let $S_1$
  and $S_2$ be leaf segments of $B$ that have a common endnode
  $v_i$ in $K$, and let their other endnodes be in $K_1$ and $K_2$,
  respectively ($K_1\neq K_2$). Then there exist  paths $P_1=u'\ldots u_1$
  and $P_2=u''\ldots u_2$, vertex-disjoint  except maybe at a vertex
  of $K$ (when $u'=u''$) and with no edges between them (except for one edge of $K$
  if $u'\neq u''$, or for edges incident to $u'$ when $u'=u''$),
  such that for $i\in \{ 1,2\}$, $K_i\cap P_i=\{ u_i\}$, $u',u''\in K\setminus\{v_i\}$
  and $v_i\not\in P_1\cup P_2$.
\end{lemma}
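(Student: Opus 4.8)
The plan is to work in the skeleton $R$ of $B$ and reduce the statement to a routing problem there. The two leaf segments $S_1$ and $S_2$ correspond to two limbs of $R$ whose pendant edges both carry label $i$; let $x_1$ and $x_2$ be the branch vertices of $R$ at which these limbs start, so that $K_1$ and $K_2$ are the big cliques of $L(R)$ formed by the edges at $x_1$ and at $x_2$ (as $K_1\neq K_2$ we have $x_1\neq x_2$). To build $P_1,P_2$ avoiding $v_i$, it suffices to find two vertex-disjoint paths of $R$, one from $x_1$ and one from $x_2$, each ending at a degree-1 vertex whose pendant edge has a label different from $i$. Writing $Z$ for the set of all degree-1 vertices of $R$ whose pendant edge has label $\neq i$, I first record that $|Z|\geq 2$: for $k=2$ this is condition~\ref{i:11}, and for $k\geq 3$ the $k-1\geq 2$ labels other than $i$ each occur (by~\ref{i:7}) on a distinct pendant edge, hence on a distinct degree-1 vertex.

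I would then apply Menger's theorem, in the form of Lemma~\ref{MengerModified}(1) with $Z$ split into a nonempty set and a single vertex, to $\{x_1,x_2\}$ and $Z$. In the favourable outcome one gets two vertex-disjoint paths, necessarily one from $x_1$ and one from $x_2$, ending at distinct $y_1,y_2\in Z$ of labels $\ell_1,\ell_2\neq i$. Taking the line graphs of these two $R$-paths and appending $v_{\ell_1}$ and $v_{\ell_2}$ produces $P_1$ and $P_2$: since the $R$-paths are vertex-disjoint their $L(R)$-vertices are pairwise non-adjacent, each path meets $K_j$ only in the unique edge it uses at $x_j$, so $K_j\cap P_j=\{u_j\}$, and the sole remaining adjacency between the two paths is through $K$ --- a shared endvertex when $\ell_1=\ell_2$, or a single edge of $K$ when $\ell_1\neq\ell_2$. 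As $y_j$ is the only degree-1 endvertex of its path, neither path carries a pendant vertex of label $i$, and $v_i$ lies on neither path; thus $u',u''\in K\setminus\{v_i\}$ and $v_i\notin P_1\cup P_2$, as required.

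The main obstacle is the unfavourable outcome: a single vertex $u$ separating $\{x_1,x_2\}$ from $Z$. Here, unlike in Lemma~\ref{l:clawSC}, condition~\ref{i:3} yields no contradiction, since the side carrying $x_1,x_2$ does contain degree-1 vertices, namely the (label-$i$) ends of $S_1,S_2$; I would instead invoke the label-connectivity condition~\ref{i:10}. If $u$ has degree~2 then by~\ref{i:5} it lies on no cycle, so it is interior to a branch or limb both of whose incident edges are bridges; sliding along this bridge to its endvertex $p$ on the $\{x_1,x_2\}$-side yields a vertex that still separates $\{x_1,x_2\}$ from $Z$ and that cannot have degree~1 (its side contains the branch vertex $x_1$), hence is an attaching vertex. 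So in every case some attaching vertex separates $\{x_1,x_2\}$ from $Z$. The component of $R$ minus this attaching vertex that contains $x_1$ or $x_2$, together with the vertex, is one of its petals (not a limb, as it contains a branch vertex); by~\ref{i:10} this petal uses at least two labels, hence contains a pendant edge of label $\neq i$, i.e.\ a vertex of $Z$ on the $\{x_1,x_2\}$-side, contradicting the separation. This rules out the unfavourable outcome and completes the proof.
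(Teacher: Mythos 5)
Your proof is correct and takes essentially the same route as the paper's: reduce to the skeleton, let $Z$ be the degree-1 vertices whose pendant edges have label $\neq i$, apply Menger's theorem to $\{x_1,x_2\}$ and $Z$, lift the two disjoint paths to $B$ via the line graph, and refute the separating-vertex outcome by condition~\ref{i:10}. In fact you supply details the paper's terse proof leaves implicit (the $|Z|\geq 2$ count for $k\geq 3$ via~\ref{i:7}, and the reduction of a possibly degree-2 separator to an attaching vertex before invoking~\ref{i:10}); just note that in that last step the fact that a degree-2 separator lies on no cycle follows from its being a cut vertex, not from~\ref{i:5}.
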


\begin{proof}
  In skeleton $R$ of $B$, the segments $S_1$ and $S_2$ correspond to limbs with
  pendant edges $e_1$ and $e_2$, respectively.  Each of
  the cliques $K_1$ and $K_2$ is a set of edges from $R$ that share a
  common vertex.  This defines two vertices $x_1$ and $x_2$ in~$R$.

  The label of $e_1$ and $e_2$ is $i$.  We denote by $X$ the set of all
  degree 1 vertices of $R$ that are incident with an edge not labeled with $i$.  We
  apply Menger's theorem to $\{x_1, x_2\}$ and $X$ (by \ref{i:7} and \ref{i:11} we have $|X|\geq 2$).  If two
  paths are obtained, then we are done. Otherwise, we obtain a cut vertex $x$, that separates $\{x_1,x_2\}$ from $X$. Since $x_1$ and $x_2$ are of degree 3 we may assume that $x$ is an attaching vertex, which contradicts~\ref{i:10}.
\end{proof}

\begin{lemma}
  \label{l:conn3segm+1vertex}
   Let $B$ be a P-graph with  special clique
  $K = \{v_1, \dots, v_k\}$ such that $k\geq 2$.
  Let $S_1$ be a leaf segment with endnode $v_i\in K$, and an endnode in $K_1\in \mathcal K\setminus \{ K\}$,
  and let
  $K_2\in \mathcal K\setminus \{ K,K_1\}$.
  Then there exist   paths
  $P_1=u_1\ldots u'$ and $P_2=u_2\ldots u''$ vertex-disjoint
  except maybe at a vertex of $K$ (when $u'=u''$) and with no edges between
  them (except for one edge of $K$ if $u'\neq u''$, and for edges incident to $u'$ when $u'=u''$), such that
  $u',u''\in K\setminus\{v_i\}$,  $v_i\not\in P_1\cup P_2$,  $P_1\cap K_1=\{ u_1\}$ and $P_2\cap K_2=\{ u_2\}$.
\end{lemma}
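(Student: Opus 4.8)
The plan is to transfer everything to the skeleton $R$ of $B$ and then invoke Menger's theorem, mirroring the proof of Lemma~\ref{l:conn4segm}. In $R$, the leaf segment $S_1$ is a limb whose pendant edge $e_1$ carries label $i$, while the big cliques $K_1$ and $K_2$ are the cliques of $L(R)$ formed by the edges of $R$ at two branch vertices $x_1$ and $x_2$; here $x_1$ is the branch-vertex end of the limb $S_1$, and $x_1 \neq x_2$ since $K_1 \neq K_2$. I would let $X$ be the set of degree-$1$ vertices of $R$ whose incident pendant edge is not labeled $i$. Because $k \geq 2$ we have $|X| \geq 2$: when $k \geq 3$ the $k-1 \geq 2$ labels distinct from $i$ each occur on a pendant edge by~\ref{i:7}, and when $k = 2$ the unique label distinct from $i$ occurs at least twice by~\ref{i:11}; distinct labels sit on distinct degree-$1$ vertices.

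Next I would apply Menger's theorem to the pair $\{x_1, x_2\}$ and the set $X$. In the good case this yields two vertex-disjoint paths of $R$, one from $x_1$ and one from $x_2$, ending at distinct vertices $z_1, z_2 \in X$; call them $Q_1$ and $Q_2$. I then set $P_1$ to be the union of $L(Q_1)$ with the vertex $u' \in K$ labeling the pendant edge at $z_1$, and likewise $P_2$ from $L(Q_2)$ and $u''$. Since $Q_1$ leaves $x_1$ along a single edge, $L(Q_1)$ meets $K_1$ only in that first vertex $u_1$, and symmetrically $L(Q_2)$ meets $K_2$ only in $u_2$; as $z_1, z_2$ are not labeled $i$, we get $u', u'' \in K \setminus \{v_i\}$ and $v_i \notin P_1 \cup P_2$. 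Vertex-disjointness of $Q_1$ and $Q_2$ in $R$ makes $L(Q_1)$ and $L(Q_2)$ both vertex-disjoint and pairwise non-adjacent in $L(R)$, so the only adjacencies between $P_1$ and $P_2$ are forced by $K$: they coincide in $u' = u''$ when $z_1$ and $z_2$ share a label (the shared vertex of $K$, with the allowed edges incident to it), and otherwise they are joined by the single edge $u'u''$ of $K$. This is exactly the asserted conclusion.

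The remaining work is to exclude the other Menger outcome, and this is the main obstacle. Suppose a single vertex $u$ separates $\{x_1, x_2\}$ from $X$. If $u \notin \{x_1, x_2\}$, then the component $C$ of $R \setminus u$ containing $x_1$ meets no vertex of $X$, so by~\ref{i:3} it has a degree-$1$ vertex and all such vertices are labeled $i$; since $x_1$ has degree at least $3$, the graph $C$ together with $u$ is not a limb and is therefore a single $u$-petal, one that uses only the label $i$. This contradicts~\ref{i:10}, which (as $k > 1$) forces every petal of an attaching vertex to use at least two labels. The degenerate case $u \in \{x_1, x_2\}$, say $u = x_1$, is handled identically: the component of $R \setminus x_1$ containing $x_2$ is then a label-$i$-only $x_1$-petal, again contradicting~\ref{i:10}. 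Hence the separating-vertex case is impossible, and the two-path outcome always produces $P_1$ and $P_2$. The one point demanding care is verifying that the separated side containing $x_1$ (or $x_2$) is a genuine single petal, so that~\ref{i:10} applies to it directly.
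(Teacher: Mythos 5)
Your proposal follows the paper's proof essentially line for line: pass to the skeleton $R$, let $X$ be the set of degree-one vertices whose pendant edge is not labeled $i$, apply Menger's theorem to $\{x_1,x_2\}$ and $X$, lift the two disjoint paths to $L(R)$ together with the appropriate vertices of $K$, and rule out the separator outcome via condition~\ref{i:10}. Your justification that $|X|\geq 2$ (condition~\ref{i:7} when $k\geq 3$, condition~\ref{i:11} when $k=2$) and your verification of the lifting step are correct, and in fact more detailed than the paper, which disposes of each in a single clause.

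There is, however, a gap at exactly the point you flag as ``demanding care''. Petals are defined only for \emph{attaching} vertices, i.e.\ cut vertices of degree at least~$3$, and condition~\ref{i:10} quantifies only over such vertices. The Menger separator $u$ is certainly a cut vertex, but Menger's theorem gives no control over its degree: a priori it could be an interior vertex of a branch, of degree~$2$, whose removal cuts the part of $R$ containing $x_1$ and $x_2$ off from all of $X$. In that case the assertion that ``$C$ together with $u$ is a single $u$-petal'' is not a statement to which \ref{i:10} applies, and the contradiction evaporates. The repair is short but must be said: if $u$ has degree~$2$, let $u'$ be the first branch vertex reached from $u$ along the path of degree-$2$ vertices entering the component $C$ containing $x_1$ (it exists because $C$ contains the degree-$\geq 3$ vertex $x_1$, so $C\cup\{u\}$ is not a path ending inside $C$); since any path from $\{x_1,x_2\}$ to $X$ must pass through $u$, and can only reach $u$ by traversing that degree-$2$ path, the vertex $u'$ again separates $\{x_1,x_2\}\setminus\{u'\}$ from $X$. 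Now $u'$ is an attaching vertex, and your petal argument (the separated component avoids $X$, has a degree-$1$ vertex by~\ref{i:3}, all of its pendant edges carry label $i$, and it is not a limb) applies verbatim to $u'$, including your degenerate case when $u'\in\{x_1,x_2\}$. To be fair, the paper's own proof asserts the contradiction with \ref{i:10} with no justification at all, so with this patch your write-up is strictly more complete than the original.
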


\begin{proof}
  In skeleton $R$ of $B$, the segment $S_1$ corresponds to a limb with
  pendant edge $e_1$.
  Each of
  the cliques $K_1$ and $K_2$ is a set of edges from $R$ that share a
  common vertex.  This defines two vertices $x_1$ and $x_2$ in $R$.

  The label of $e_1$ is $i$.  We denote by $X$ the set of all
  degree 1 vertices of $R$ that are incident with an edge not labeled with $i$.  We
  apply Menger's theorem to $\{x_1, x_2\}$ and $X$ (by \ref{i:7} and (xi) we have $|X|\geq 2$).  If two
  paths are obtained, then we are done. Otherwise, we obtain a cut vertex $x$, that separates $\{x_1,x_2\}$ from $X$. Since $x_1$ and $x_2$ are of degree 3 we may assume that $x$ is an attaching vertex, which contradicts~\ref{i:10}.
\end{proof}

\begin{lemma}\label{l:pyramidSv}
Let $B$ be a P-graph with special clique $K=\{v_1\}$. If $S$ is a leaf segment of $B$ and $S'$  an
internal segment of $B$, with an endnode in $K'\in\mathcal K$ such that $S\cap K'=\emptyset$, then there exists a pyramid $\Pi$ contained in $B$, such that $S$ and $S'$ are contained in different paths of $\Pi$ and $|\Pi\cap K'|=2$.
\end{lemma}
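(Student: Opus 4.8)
The plan is to do all the work inside the skeleton $R$ of $B$ and to produce the pyramid as the line graph of a \emph{spider} of $R$ — a subtree with a single branch vertex $c$ (its \emph{center}) and three legs running from $c$ to three distinct degree~$1$ vertices, meeting only at $c$ — together with the unique vertex $v_1$ of $K$. View $S$ as a limb of $R$ with ends $a$ (a branch vertex) and $y_S$ (of degree~$1$), and $S'$ as a branch of $R$ with ends $u$ and $w$, where $K'$ is the big clique of $L(R)$ coming from the vertex $u$; the hypothesis $S\cap K'=\emptyset$ says precisely that $a\neq u$. Since every pyramid of $B$ has apex $v_1$ (claw centres lie in $K$, as in the proof of Lemma~\ref{l3}) and its triangle lies in $L(R)$ (because $R$ is triangle-free), a pyramid of $B$ is exactly the line graph of such a spider plus $v_1$. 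So it suffices to find a spider of $R$ in which one leg contains all of the limb $S$ (and hence ends at $y_S$) while a \emph{different} leg contains all of the branch $S'$ with $u$ as an interior vertex. The equality $|\Pi\cap K'|=2$ then comes for free: the $S'$-leg uses exactly two edges at $u$, the centre is chosen different from $u$, and by vertex-disjointness no other leg meets $u$.

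For the construction I would take the centre to be $w$ (the end of $S'$ away from $K'$) and pre-commit $S'$ to one leg, which starts at $w$, runs along $S'$ to $u$, and is then prolonged by a path leaving $u$; this is the mechanism already used in Lemma~\ref{l:bPyr}. Picking a neighbour $w'$ of $w$ on an edge different from that of $S'$ (and, if $a=w$, also different from that of $S$), I would apply Lemma~\ref{MengerModified}(2) with source triple $\{u,w,w'\}$, in the manner of the proof of Lemma~\ref{l:bPyr}, but with the vertex $a$ placed among the targets, the remaining targets being degree~$1$ vertices other than $y_S$ (the required two-path hypothesis being supplied by a preliminary Menger/Lemma~\ref{l15} argument, with \ref{i:3} excluding a cut vertex). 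The leg reaching $a$ is completed into a leg ending at $y_S$ by appending the limb $S$; since the legs are vertex-disjoint apart from the two meeting at $w$, the $S'$-leg avoids $a$ and hence avoids $S$ and $y_S$, so $S$ and $S'$ land on distinct legs. The three first edges at $w$ are forced distinct by disjointness, so the object is a genuine spider centred at $w$. Finally, because $k=1$ every pendant edge carries label~$1$; as $R$ has at least three pendant edges (\ref{i:1}), \ref{i:8} forbids any pendant edge from being incident with a vertex of degree $\ge 3$, so every limb, hence every leg, has length at least~$2$. Thus the line graph of the spider together with $v_1$ is a \emph{long} pyramid, with no superfluous edges since the legs meet only at the centre.

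The main obstacle is the alternative output of Lemma~\ref{MengerModified}(2): a set of at most two vertices separating $\{u,w,w'\}$ from the chosen targets. I would dispose of it exactly as in Lemmas~\ref{l:bPyr} and \ref{l:clawCliqueHole}: a single cut vertex contradicts \ref{i:3}, and a two-vertex cutset contradicts \ref{i:4} (with \ref{i:10} in reserve for a petal argument when $a=w$). The second delicate point is guaranteeing that $S$ and $S'$ genuinely separate. In the typical outcome $a$ is reached from the $w$-side and the argument above applies verbatim. In the degenerate outcome, where the leg issuing from $u$ is the one reaching $a$ (so that $S$ and $S'$ momentarily share a leg through $u$ and $a$), I would re-centre the spider at $a$: one leg is $S$ itself, a second runs from $a$ back to $u$, through $S'$ to $w$ and on to a degree~$1$ vertex (carrying $S'$ but not $S$), and the third uses an edge at $a$ not yet used, which exists since $\deg_R(a)\ge 3$ and whose extension to a degree~$1$ vertex is again furnished by connectivity together with \ref{i:3}/\ref{i:4}. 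Once the spider is in hand, reading off $|\Pi\cap K'|=2$ and that $S$ and $S'$ lie on different paths of $\Pi$ is immediate.
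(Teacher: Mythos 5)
Your global strategy---build a spider in the skeleton $R$ (a subtree with one centre of degree 3 and three legs ending at degree-1 vertices), with the limb of $S$ swallowed by one leg and the branch of $S'$ lying on a different leg with $u$ in its interior, then take its line graph together with $v_1$---is exactly the mechanism behind the paper's proof, and your \emph{typical} case (the Menger path reaching $a$ issuing from the $w$-side) assembles correctly, as does your treatment of the separator outcome via \ref{i:3}/\ref{i:4}. A smaller complaint is that your application of Lemma~\ref{MengerModified}(2) is under-specified: that lemma needs \emph{two} singleton targets and the hypothesis of two disjoint paths from two of the sources to those singletons, and you never say what the second singleton is nor how the hypothesis is obtained (it can be arranged, e.g.\ by first applying Lemma~\ref{MengerModified}(1) to $\{u,w\}$ with targets $\{W_1,\{a\}\}$ and killing the cut-vertex outcome with \ref{i:3}, but this has to be said).

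The genuine gap is in your \emph{degenerate} case. When the lemma returns three disjoint paths in which the path reaching $a$ starts at $u$, you re-centre at $a$ and need a third leg: a path from $a$ to a degree-1 vertex, vertex-disjoint (except at $a$) from the limb and from the long leg $Q_u\cup S'\cup Q_w$. You claim this is ``furnished by connectivity together with \ref{i:3}/\ref{i:4}.'' It is not: what blocks such a path is the union of the two legs already built, a pair of long paths, not a cutset of one or two vertices, so neither \ref{i:3} nor \ref{i:4} can be invoked against the failure. The paper meets the analogous situation only in its second case, which is \emph{by definition} the case where $y_1$ (your $u$) is a cut vertex of $R$ separating the limb from the rest; there one can apply Lemma~\ref{MengerModified}(1) inside the component $C_x$ of $R\setminus y_1$ containing the limb, and the Menger obstruction---a single vertex $z$ of $C_x$---combines with $y_1$ into the $2$-cutset $\{z,y_1\}$ of $R$, contradicting \ref{i:4}. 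Your degenerate case, by contrast, is determined by which permutation the Menger lemma happens to hand you, not by $u$ being a cut vertex, so that trick is unavailable as stated. To close the gap you would need the dichotomy the paper builds in from the start: first build a two-path frame from $\{u,w\}$ to degree-1 vertices (never making $a$ a target), then try to connect the limb end $y_S$ to the frame inside $R\setminus u$ and let the attachment point be the centre of the spider---this succeeds in every permutation-free way---and only when no such connection exists, so that $u$ genuinely is a cut vertex, run the component argument. Without that reorganization (or an equivalent new argument for your third leg), the degenerate case of your proof does not go through.
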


\begin{proof}
Let $R$ be the skeleton of $B$.
Let $P_S$ (resp. $P_{S'}$) be the limb (resp. branch) of $R$ that corresponds to $S$ (resp. $S'$).
Let $x$ be the degree 1 vertex of $P_S$,  let $x_1$ be the other endnode of $P_S$, and let $y_1$ and $y_2$ be the endnodes of $P_{S'}$, such that edges incident to $y_1$ correspond to nodes of $K'$. Then $x_1\neq y_1$. Furthermore, let $X$ be the set of all degree 1 vertices of $R$ different from $x$.

If in $R$ there exists a vertex $z$ that separates $\{y_1,y_2\}$ from $X$, then for any internal vertex $z'$ of $P_S$ (it exists by \ref{i:7} and \ref{i:8}),
the set $\{z,z'\}$ is a cutset of $R$ that contradicts (iv). So, by Menger's theorem there are vertex-disjoint paths
$P'=y_1\ldots x'$ and $P''=y_2\ldots x''$, where $x',x''\in X$. Suppose that in $R\setminus y_1$ there exists a path from $x$ to
$(P'\cup P'')\setminus\{y_1\}$, and let $P'''$ be chosen such that it has the minimum length. Then $L(P'\cup P''\cup P'''\cup P_{S'})\cup \{v_1\}$ induces the desired pyramid.

So, we may assume that $y_1$ is a cut vertex of $R$, such that $x$ and $(P'\cup P'')\setminus\{y_1\}$ are contained in
different connected components of $R\setminus y_1$. Let $C_x$ be the connected component of $R\setminus\{y_1\}$ that contains
$x$, let $e_x$ be the edge incident to $x$ and let $e_y$ be an edge of $P_{S'}$. By Lemma \ref{l4} there exists a path $P$ in $R$
that contains edges $e_x$ and $e_y$ whose endnodes  are of degree 1 in $R$.  Note that $P$ contains $P_{S'}$. Let $x_1'$ be a node adjacent to $x_1$ that does not
belong to $P$. Since $x_1\neq y_1$, we have $\{x_1,x_1'\}\subseteq C_x$.
Let us apply Lemma \ref{MengerModified} in graph $C_x$ to $\{x_1,x_1'\}$ and $\{X_1,x\}$, where $X_1$ is the set of all degree 1
(in $R$) nodes of $C_x$ different from $x$ ($X_1$ is non-empty, since otherwise for any internal vertex $z'$ of $P_S$ the set
$\{z',y_1\}$ is a cutset of $R$ that contradicts  \ref{i:4}). If vertex-disjoint  paths $P_1=P_S$ and $P_1'$ are obtained, then
$L(P\cup P_1')$ and $v_1$ induce a desired pyramid $\Pi$. Otherwise, let $z$ be a vertex of $C_x$ that separates $\{x_1,x_1'\}$ from
$X_1\cup\{x\}$. But then $\{z,y_1\}$ is a cutset of $R$ that contradicts \ref{i:4}.
\end{proof}

\begin{lemma}\label{l:pyramidSS}
Let $B$ be a P-graph with special clique $K=\{v_1\}$. If $S_1$ and $S_2$ are leaf segments of $B$,
then there exists a pyramid $\Pi$ contained in $B$, such that $S_1$ and $S_2$ are contained in different paths of $\Pi$.
\end{lemma}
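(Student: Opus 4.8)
The plan is to work inside the skeleton $R$ of $B$ and to realize the pyramid as $L(T)\cup\{v_1\}$ for a suitable \emph{induced subdivided claw} $T$ of $R$. Since $B$ is a P-graph with $K=\{v_1\}$, every pendant edge of $R$ carries the single label $1$, so in $B$ the vertex $v_1$ is adjacent to every pendant vertex of $L(R)$. The leaf segments $S_1,S_2$ correspond to limbs $L_1,L_2$ of $R$, with pendant edges $e_1,e_2$ and degree-$1$ endnodes $y_1,y_2$; let $x_1,x_2$ be their branch-vertex ends. Because $e_1$ and $e_2$ share the label $1$, condition~\ref{i:8} forbids either limb from having length $1$ (a pendant edge at a branch vertex would have a label used only once), so $L_1,L_2$ have interior vertices of degree $2$; and \ref{i:9} forbids parallel limbs of equal label, so $x_1\neq x_2$. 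It then suffices to find three internally disjoint paths from a common branch vertex $v$ to three distinct degree-$1$ vertices, two of them running through $L_1$ and $L_2$, with no edges between the paths except at $v$: taking line graphs turns such a claw into three chordless paths meeting in the triangle formed by the three edges at $v$, and adjoining $v_1$ (adjacent to the three pendant vertices) gives a pyramid whose three paths contain $S_1$, $S_2$ and a third segment separately.

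To build the claw I first apply Lemma~\ref{l4} to $e_1$ and $e_2$. As both are pendant, no cycle passes through them, so the lemma returns a chordless path $Q$ with degree-$1$ endnodes through $e_1,e_2$; these endnodes must be $y_1,y_2$, so $Q=y_1\dots y_2$ contains $L_1$, $L_2$ and a \emph{core} sub-path from $x_1$ to $x_2$. Let $X$ be the set of degree-$1$ vertices of $R$ other than $y_1,y_2$; it is non-empty by~\ref{i:1}. Next I take $P_3$ to be a shortest path from $X$ to $V(Q)$, with endnodes $y_3\in X$ and $v\in V(Q)$. By minimality $P_3$ meets $Q$ only at $v$ and is chordless. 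Since $y_1,y_2$ have degree $1$ and limb interiors have degree $2$, the vertex $v$ (which carries a $P_3$-edge in addition to its two $Q$-edges) must lie on the core; hence $v$ is a branch vertex and $e_1,e_2$ fall on the two distinct legs $y_1\dots v$ and $v\dots y_2$ of $Q$.

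The delicate point, and the main obstacle, is to check that $T:=Q\cup P_3$ is \emph{induced}, i.e.\ that no edge joins $P_3$ to $Q$ except at $v$. Minimality of $P_3$ already excludes a chord from any internal vertex at distance at least $2$ from $v$ (it would give a shorter $X$-to-$Q$ path). It remains to rule out a chord $p'q$ where $p'$ is the neighbour of $v$ on $P_3$ and $q\in V(Q)\setminus\{v\}$. Such a $q$ is neither $y_1,y_2$ (degree $1$) nor a limb interior (degree $2$), so $q$ lies on the core with degree at least $3$; likewise $p'$, adjacent to $v$, $q$ and its predecessor on $P_3$, has degree at least $3$. Then $v,p',q$ together with the core sub-path from $q$ to $v$ form a cycle of $R$ whose edge $vp'$ joins two vertices of degree at least $3$, contradicting~\ref{i:5}. (One also notes $P_3$ has length at least $2$: its endnode $y_3\in X$ ends a limb of length at least $2$ by~\ref{i:8}, so the neighbour of $y_3$ has degree $2$ and cannot be the branch vertex $v$.) Thus $T$ is an induced subdivided claw with centre $v$, whose three legs end at $y_1,y_2,y_3$ and contain $L_1$, $L_2$ and the limb of $y_3$. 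Taking $\Pi=L(T)\cup\{v_1\}$ yields the required pyramid, with $S_1$ and $S_2$ on distinct paths.
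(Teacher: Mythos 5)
Your proof is correct and takes essentially the same route as the paper's: both construct a subdivided claw in the skeleton $R$ whose three legs end at degree-1 vertices, with the limbs corresponding to $S_1$ and $S_2$ forced onto different legs, and then obtain the pyramid as the line graph of that claw together with $v_1$ (the paper builds the claw from two direct connections, you from the Lemma~\ref{l4} path plus a shortest path to a third leaf --- an immaterial difference). Your ``delicate point'' about $T$ being induced is superfluous, though harmless: adjacency in $L(R)$ depends only on whether two edges of $R$ share an endpoint, so $B[E(T)\cup\{v_1\}]$ is the claimed pyramid whether or not $R$ has chords between $P_3$ and $Q$, and your (correct) chord-exclusion argument via~\ref{i:5} could simply be deleted.
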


\begin{proof}
Let $x_1$ (resp.\ $x_2$) be degree 1 vertex of skeleton $R$ of $B$ incident to pendant edge that corresponds to a vertex of $S_1$ (resp.\ $S_2$).
Furthermore, let  $X$ be the set of all degree 1 vertices of $R$ different from $x_1$ and $x_2$.
Note that by \ref{i:1}, $X\neq \emptyset$.
Let $P'$ be a direct connection from
$\{x_1,x_2\}$ to $X$ in $R$, and w.l.o.g.\ let $x_1$ be the neighbor of one endnode of $P'$. Let $P''$ be a direct connection from $x_2$ to $P'$.
Then $L(P'\cup P'')\cup \{v_1\}$ induces the desired pyramid.
\end{proof}

\begin{lemma}\label{newsec4l}
Let $B$ be a P-graph with special clique $K=\{v_1, \ldots ,v_k\}$.
Let $v$ be the vertex of an internal segment of length 0, let $K_1\in {\cal K}\setminus \{ K\}$ be such that $v\in K_1$
and let $u\in K_1\setminus \{ v \}$.
Then $B$ contains a pyramid $\Pi =3PC(uvx,y)$ such that $x\in K_1$ and $y\in K$.
\end{lemma}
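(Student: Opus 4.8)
The plan is to work in the skeleton $R$ of $B$ and build the required pyramid $\Pi=3PC(uvx,y)$ as the line graph of a suitable subtree of $R$, together with the vertex $y\in K$. Since $v$ is an internal segment of length~0, viewed as an edge of $R$ it joins two branch vertices; because $v\in K_1$ and $u\in K_1\setminus\{v\}$, the clique $K_1$ corresponds to the set of edges of $R$ incident with a common vertex, say $a$, of degree at least~3, and both $u$ and $v$ are edges of $R$ at $a$. Let me write $v=ab$ and $u=ac$ for the corresponding edges. The apex $y$ of the target pyramid will be a vertex $v_\ell\in K$, so I need three chordless paths of $R$ emanating from the triangle-region around $a$ that reach pendant edges whose labels allow them to be completed through a single vertex of $K$.

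\medskip

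\noindent\textbf{Main construction.} First I would identify the three branch vertices I want the three paths of the pyramid to leave from: informally $a$ (carrying both $u$ and $v$, since $uva$ will give the triangle of the pyramid in $L(R)$), and then the two ``far'' endpoints $b$ and $c$ of the edges $v=ab$ and $u=ac$. The idea is to route one path of the pyramid along $v$ (through $b$), one along $u$ (through $c$), and have both, together with a third path, terminate at pendant edges that share at most two labels so they can be glued through a single $v_\ell\in K$ to form the triangle $b_1b_2b_3$ of the pyramid. The natural tool is Lemma~\ref{l:clawCliqueHole} or, more directly, Lemma~\ref{l15}: applying Lemma~\ref{l15} to the branch vertices $b$ and $c$ yields two vertex-disjoint paths $P_b=b\dots y_1$ and $P_c=c\dots y_2$ ending at degree-1 vertices whose incident pendant edges carry the same label $\ell$. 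Prepending the edges $v=ab$ and $u=ac$ turns these into two paths of $R$ through $a$; in $L(R)$ they become two chordless paths starting at the adjacent vertices $v,u$ and ending at pendant vertices of label $\ell$, which the clique vertex $v_\ell\in K$ joins into a hole.

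\medskip

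\noindent\textbf{Getting the third path.} The above gives only two paths; a pyramid needs three leaving the common triangle. I would therefore apply Lemma~\ref{MengerModified}, part~(2), to the triple $\{a,b,c\}$ (after recording that two disjoint paths from $\{b,c\}$ already exist by the previous paragraph) and to the target sets $\{X,\{y_1\},\{y_2\}\}$, where $X$ is the set of all degree-1 vertices of $R$ other than $y_1,y_2$ (nonempty by~\ref{i:1}). The favorable outcome gives three vertex-disjoint paths from $a,b,c$ to pendant edges; I check their labels lie in at most two classes (using that $y_1,y_2$ already share label $\ell$, and that the third endpoint can be chosen, via condition~\ref{i:7}, so that at most two labels occur), so that adding one or two vertices of $K$ completes a long pyramid in $B$ with apex $y\in K$, the triangle $uvx$ with $x\in K_1$ (the third edge of $R$ at $a$ used by the third path), and $|\Pi\cap K'|$ as required. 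The unfavorable outcome of Lemma~\ref{MengerModified} is a separating set of at most two vertices of the form $\{u_1,u_2\}$; I would argue this contradicts the skeleton axioms~\ref{i:3} or~\ref{i:4}, exactly as in the proofs of Lemmas~\ref{l:clawCliqueHole} and~\ref{l:clawSC}.

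\medskip

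\noindent\textbf{Expected obstacle.} The delicate point is \emph{label bookkeeping}: I must guarantee that the three pendant edges reached by the three paths use at most two distinct labels, so that the gluing through $K$ produces a genuine pyramid (three paths meeting a common triangle $b_1b_2b_3$) rather than a theta or a configuration straddling three cliques of $K$. Ensuring all three paths can be routed while keeping the third edge $x$ at $a$ inside $K_1$ (so that the triangle $uvx$ lies in $K_1$ and $|\Pi\cap K'|=2$) requires care when $a$ has degree exactly~3, and the label-counting must invoke~\ref{i:7} (some label repeats) together with~\ref{i:8} (a pendant edge at a degree-$\geq 3$ vertex has a unique label) to rule out degenerate cases; this is where I expect the bulk of the verification to lie, the Menger-type separation being routine once the target sets are set up correctly.
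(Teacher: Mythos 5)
Your overall strategy is the paper's strategy: realize $u,v$ as edges $ac,ab$ of the skeleton sharing the branch vertex $a$, route three disjoint skeleton paths from $a$, $b$, $c$ to pendant edges carrying at most two labels, and close them up through one or two vertices of $K$ into a pyramid with triangle $uvx\subseteq K_1$. Indeed, what you are re-deriving is exactly Lemma~\ref{l:bPyr}, and the paper's entire proof is a one-line application of that lemma to $x_1,x_1',x_2$ (your $a,c,b$). However, your derivation has a genuine gap: in the ``Main construction'' you apply Lemma~\ref{l15} to $b$ and $c$, calling both of them branch vertices. The vertex $b$ is a branch vertex (since $v$ is an internal segment of length~$0$, both ends of its edge have degree at least~$3$), but $c$ need not be: $u$ is an arbitrary element of $K_1\setminus\{v\}$, so the edge $ac$ may be a pendant edge of $R$ (then $c$ has degree~$1$) or the first edge of a segment of length at least~$2$ (then $c$ has degree~$2$). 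This is in fact the typical case --- $c$ is a branch vertex only when $u$ is itself the vertex of another internal segment of length~$0$. When $c$ is not a branch vertex, Lemma~\ref{l15} cannot be invoked for the pair $\{b,c\}$, so the prerequisite of Lemma~\ref{MengerModified}(2) (two disjoint paths from $\{b,c\}$ to $\{y_1,y_2\}$) is never established, and your third-path step has nothing to stand on.

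The repair is a permutation of roles, and it is what the paper's Lemma~\ref{l:bPyr} does: apply Lemma~\ref{l15} to $a$ and $b$ (both guaranteed branch vertices), obtaining disjoint paths from $\{a,b\}$ to pendant vertices $y_1,y_2$ with a common label, and then feed $c$ in as the \emph{extra} source $v_1$ of Lemma~\ref{MengerModified}(2) with targets $\{X,\{y_1\},\{y_2\}\}$; afterwards prepend the edge $u=ac$ to the path leaving $c$. Note also that your flagged ``expected obstacle'' about label bookkeeping is not an obstacle at all: two of the three terminal pendant edges are those at $y_1$ and $y_2$, which share a label by construction, so at most two labels occur automatically, with no appeal to condition~\ref{i:7} and no freedom needed in choosing the third endpoint (conditions \ref{i:8} and~\ref{i:9} only enter when checking that the resulting pyramid is long). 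Similarly, no care is needed to keep $x$ in $K_1$: disjointness of the three paths forces the first edge of the path leaving $a$ to be an edge at $a$ other than $u,v$, and every edge at $a$ lies in $K_1$ by the definition of a big clique.
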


\begin{proof}
Let $R$ be the skeleton of $B$, and let $e=x_1x_2$ be an edge of $R$ that corresponds to vertex $v$.
Let $x_1'$ be the neighbor of $x_1$ in $R$ such that $x_1x_1'$ corresponds to vertex $u$.
Let $P_1=x_1\ldots y_1$, $P_1'=x_1x_1'\ldots y_1'$ and $P_2=x_2\ldots y_2$ be the three paths obtained by
applying Lemma \ref{l:bPyr} to $x_1,x_1'$ and $x_2$. Then $y_1,y_1'$ and $y_2$ are vertices of degree 1 in $R$
incident with edges with at most two different labels, say $i$ and $j$. It follows that $L(\{x_1,x_2\}\cup P_1\cup P_1'\cup P_2)$
and $\{ v_i,v_j\}$ induce the desired pyramid in $B$.
\end{proof}

\section{Attachments to a P-graph}
\label{sec:attach}

\begin{lemma}[\cite{twf-p1}]\label{l:Hv}
  Let $G$ be a (theta, wheel)-free graph. If $H$ is a hole of $G$ and
  $v$ a node of $G\setminus H$, then the attachment of $v$ over
  $H$ is a clique of size at most 2.
\end{lemma}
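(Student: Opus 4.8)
The plan is to split on the size of $N_H(v)$ and rule out every configuration other than a clique of size at most~$2$. First I would dispose of the case $|N_H(v)|\geq 3$: by the very definition of a wheel, a hole $H$ together with a vertex $v\notin V(H)$ having at least three neighbors on $H$ is a wheel $(H,v)$ with rim $H$ and center $v$. Since $G$ is wheel-free, this is impossible, so $|N_H(v)|\leq 2$. It then remains only to show that if $v$ has exactly two neighbors $a,b$ in $H$, then $a$ and $b$ are adjacent.

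For this remaining case I would argue by contradiction, assuming $a$ and $b$ are non-adjacent, and produce a theta. Since $H$ is a chordless cycle of length at least~$4$, the two vertices $a,b$ cut $H$ into two internally disjoint arcs $P_1 = a\dots b$ and $P_2 = a\dots b$; because $a,b$ are non-adjacent along $H$, each arc has length at least~$2$, and each is chordless, being a subpath of the hole $H$. I would then set $P_3 = a\,v\,b$, a path of length~$2$ whose only internal vertex is $v$.

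The key verification is that $P_1,P_2,P_3$ constitute a theta $3PC(a,b)$. They are internally node-disjoint, since the internal vertices of $P_1,P_2$ lie on $H$ while $v\notin V(H)$; all three have length at least~$2$; and the only edges among them are the three incident to $a$ and the three incident to $b$. The last point is the one to check with care: chords between $P_1$ and $P_2$ are excluded because $H$ is chordless, and $v$ (the sole internal vertex of $P_3$) has no neighbor on $P_1\cup P_2$ other than $a$ and $b$ precisely because $N_H(v)=\{a,b\}$ by assumption. This theta contradicts that $G$ is theta-free, forcing $a$ and $b$ to be adjacent, and hence $N_H(v)$ is a clique of size at most~$2$.

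There is no substantial obstacle here; the whole argument is immediate once the three paths are laid out. The only thing demanding attention is the edge-count bookkeeping guaranteeing that the constructed object is genuinely a theta rather than, say, a configuration with a stray chord, and in particular the observation that both arcs of $H$ have length at least~$2$, which is exactly equivalent to the non-adjacency of $a$ and $b$ that we are assuming for contradiction.
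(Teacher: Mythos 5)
Your proof is correct: the case $|N_H(v)|\geq 3$ is exactly the definition of a wheel, and the case of two non-adjacent neighbors yields a $3PC(a,b)$ precisely as you verify, with the chordlessness of $H$ and the assumption $N_H(v)=\{a,b\}$ ruling out any stray edges. This paper does not prove the lemma itself but imports it from Part~I (\cite{twf-p1}); your argument is the natural one this statement admits and matches the intended reasoning, so there is nothing further to add.
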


\begin{lemma}\label{l:2inAhole}
  In a P-graph $B$ every pair of segments is contained in a hole.
  Also, every pair of vertices of $B$ is contained in a hole.
\end{lemma}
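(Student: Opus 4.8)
The plan is to work inside the skeleton $R$ and to build every required hole as the line graph of a suitable subgraph of $R$, closed off through the special clique $K$. Recall that each segment of $B$ is the line graph of a branch or of a limb of $R$, and that a leaf segment also carries its $K$-end $v_t$, where $t$ is the label of its pendant edge. I would first record the two ways a hole of $B$ arises. If $C$ is a cycle of $R$, then $L(C)$ is a chordless cycle of $L(R)$ of length $|C|\ge 4$ (since $R$ is triangle-free and chordless by~\ref{i:1}), hence a hole. Next, if $P=y_1\dots y_2$ is a path of $R$ whose ends $y_1,y_2$ have degree $1$, with pendant-edge labels $i$ and $j$, then $P$ is automatically chordless: the ends of a chord cannot be $y_1$ or $y_2$, which have degree $1$, so they would be interior vertices of $P$ of degree at least $3$, and the chord edge would lie on a cycle both of whose relevant endnodes have degree at least $3$, contradicting~\ref{i:5}. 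Thus $L(P)$ is a chordless path whose only pendant vertices are its two ends, so $L(P)$ together with $v_i$ (if $i=j$) or with $v_i,v_j$ (if $i\neq j$) is a chordless cycle. It is a hole exactly when its length is at least $4$; since $P$ contains two distinct edges this is automatic when $i\neq j$, and when $i=j$ a length of exactly $3$ would force $P=y_1by_2$ with $y_1,y_2$ the ends of two parallel length-$1$ limbs, contradicting~\ref{i:9}.

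For the statement about pairs of segments, given distinct $S_1,S_2$ I would pick an edge $e_1$ of $R$ lying on the branch or limb of $S_1$ and likewise $e_2$ for $S_2$, and apply Lemma~\ref{l4} to $e_1,e_2$. If the outcome is a cycle $C$, then neither $e_1$ nor $e_2$ lies on a limb (every edge of a limb is a cut-edge, so lies on no cycle); hence both $S_1,S_2$ are internal, and $L(C)$ is the desired hole, containing each $S_i$ in full because containing one edge of a branch forces the whole branch, the interior vertices of a branch having degree $2$. If the outcome is a path $P$ with degree-$1$ ends, the same degree-$2$ propagation shows that $P$ contains the whole branch or limb of each $S_i$; moreover, whenever $S_i$ is a leaf segment, the degree-$1$ end $y_i$ of its limb is forced to be an end of $P$ (the path enters the limb and cannot stop at one of its degree-$2$ interior vertices), so closing $L(P)$ through $K$ as above reattaches precisely the $K$-end $v_t$ of $S_i$. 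In both outcomes the resulting hole contains $S_1$ and $S_2$ entirely.

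Finally I would deduce the statement about pairs of vertices. If $u,w\in L(R)$, I apply Lemma~\ref{l4} directly to the corresponding two edges of $R$ and close the outcome exactly as above, the only short degenerate case again being excluded by~\ref{i:9}. If $u=v_i\in K$, then by~\ref{i:7} some pendant edge carries label $i$, so $v_i$ is the $K$-end of some leaf segment $S_1$; taking $S_2$ to be a segment containing $w$ (a leaf segment ending at $w$ if $w\in K$) and applying the segment version yields a hole through both $u$ and $w$, unless $S_1=S_2$. In that last case it suffices to know that a single segment lies in a hole: for internal segments this is Lemma~\ref{l:clawHole}, and for leaf segments it follows from the segment version applied to $S_1$ and any other leaf segment, which exists since $R$ has at least three pendant edges by~\ref{i:1}.

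The part that needs the most care is ensuring that the \emph{whole} of each segment lands in the hole, and in particular the $K$-end of a leaf segment. This rests on the observation that containing a single edge of a branch or limb forces, through the degree-$2$ interior vertices, both that the entire branch or limb is swept out and that the degree-$1$ end of a limb becomes an endpoint of the path returned by Lemma~\ref{l4}, so that the close-up through $K$ reproduces the $K$-end. The second delicate point is the length-at-least-$4$ check for the closed-up path, which is exactly where condition~\ref{i:9} on parallel limbs is needed.
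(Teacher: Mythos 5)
Your proof is correct and takes essentially the same route as the paper, whose entire proof is the one-line observation that the lemma "follows directly from Lemma~\ref{l4}" (every segment contains a vertex corresponding to an edge of $R$, apply Lemma~\ref{l4}, and close the resulting cycle or degree-one-ended path through $K$); you have simply made explicit the chordlessness, whole-segment propagation, and length checks that the paper leaves implicit. One small repair: in the degenerate case $P=y_1by_2$ with $i=j$, the condition to invoke is~\ref{i:8} (here $b$ has degree at least~3, so two pendant edges at $b$ cannot share a label), since the parallel-limb clause of~\ref{i:9} only applies when $R$ has branches.
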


\begin{proof}
  Follows directly from Lemma~\ref{l4} (note that every vertex of $B$
  is contained in a segment of $B$, and every segment contains a
  vertex that corresponds to an edge of skeleton $R$ of $B$).
\end{proof}

\begin{lemma}\label{node-attach}
  Let $G$ be a (theta, wheel, diamond)-free graph and $B$ a P-graph
  contained in $G$. If $v \in G\setminus B$, then either
  $|N_B (v)|\leq 1$ or $N_B(v)$ is a maximal clique of $B$.
\end{lemma}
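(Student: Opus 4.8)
The plan is to show that a vertex $v$ outside the P-graph $B$ either has at most one neighbor in $B$, or its neighborhood $N_B(v)$ is a clique that is maximal in $B$. The natural strategy is to suppose $|N_B(v)| \geq 2$ and derive strong structural constraints on $N_B(v)$ from the fact that $G$ is (theta, wheel, diamond)-free, using that every pair of vertices of $B$ lies in a hole (Lemma~\ref{l:2inAhole}).

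First I would establish that $N_B(v)$ is a clique. Suppose $a, b \in N_B(v)$ are nonadjacent. By Lemma~\ref{l:2inAhole}, $a$ and $b$ lie together in some hole $H$ of $B$. Now $v$ is a node of $G \setminus H$, so by Lemma~\ref{l:Hv} the attachment $N_H(v)$ is a clique of size at most $2$; since $a,b \in N_H(v)$ are nonadjacent, this is a contradiction. Hence $N_B(v)$ is a clique.

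Next, assuming $|N_B(v)| \geq 2$, I would argue that this clique is \emph{maximal} in $B$. The idea is that if $N_B(v)$ were contained in a strictly larger clique $Q$ of $B$, there would be a vertex $w \in Q \setminus N_B(v)$ adjacent to every vertex of $N_B(v)$ but not to $v$. Picking two vertices $a,b \in N_B(v)$, the set $\{v, a, b, w\}$ would induce a diamond (since $a,b,w$ are pairwise adjacent, $v$ is adjacent to $a,b$ but not to $w$), contradicting that $G$ is diamond-free. So no vertex outside $N_B(v)$ can be complete to $N_B(v)$, which is precisely maximality of $N_B(v)$ as a clique of $B$. The one subtlety is confirming that a clique of $B$ with at least two vertices lies in a unique maximal clique and that a strict superset gives such a $w$; this follows from the structure of cliques in $B$ (every two cliques of $B$ meet in at most one vertex, as noted after the definition of blobs), so a clique of size $\geq 2$ determines the maximal clique containing it.

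The main obstacle I anticipate is the maximality argument: one must rule out that $N_B(v)$ equals some $2$-element clique $\{a,b\}$ that extends in more than one way, or that the witness $w$ could coincide with a neighbor of $v$ in a degenerate configuration. I expect this to be handled cleanly by the diamond-freeness of $G$ together with the remark that cliques of $B$ pairwise intersect in at most one vertex, so the larger clique containing $N_B(v)$ is well-defined and any proper extension yields the forbidden diamond. If instead $|N_B(v)| = 1$, we are in the first alternative and there is nothing to prove.
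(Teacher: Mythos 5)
Your proof is correct and follows the same route as the paper: nonadjacent neighbors would lie in a common hole by Lemma~\ref{l:2inAhole}, contradicting Lemma~\ref{l:Hv}, and maximality is then immediate from diamond-freeness (any vertex of $B$ complete to the clique $N_B(v)$ but nonadjacent to $v$ yields a diamond), which is exactly what the paper compresses into ``it suffices to show that $N_B(v)$ is a clique.'' The ``subtlety'' you raise about uniqueness of the maximal clique containing $N_B(v)$ is not actually needed: any proper clique extension supplies the witness $w$ for the diamond, so no appeal to the structure of cliques of $B$ is required.
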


\begin{proof}
  Since $G$ is diamond-free, it suffices to show that $N_B(v)$ is a
  clique.  Assume not and let $v_1$ and $v_2$ be non adjacent
  neighbors of $v$ in $B$.    By Lemma \ref{l:2inAhole}, $v_1$
  and $v_2$ are contained in a hole $H$ of $B$. But then $H$ and $v$
  contradict Lemma \ref{l:Hv}.
\end{proof}

Let $G$ be a (theta, wheel, diamond)-free graph and
$\Pi =3PC(x_1x_2x_3,y)$ be a pyramid contained in $G$.
Then $\Pi$ is a long pyramid and by Lemma \ref{pyrBas} it is a P-graph
with special clique $\{ y\}$.
 For
$i=1, 2, 3$, we denote by $S_i$ the branch of $\Pi$ from $y$ to $x_i$
and we denote by $y_i$ the neighbor of $y$ on this path.  By Lemma~\ref{node-attach} it follows that the
attachment of a node $v\in G\setminus \Pi$ over $\Pi$ is a clique of
size at most~3. For $i=1,2,3$, we shall say that $v$ is of {\em Type i
  w.r.t.\ $\Pi$} if $|N_{\Pi}(v)|=i$.  We now define several kinds of
paths that interact with $\Pi$.

\begin{itemize}

\item A {\it crossing} of $\Pi$ is a chordless path
  $P=p_1\ldots p_k$ in $G\setminus \Pi$ of length at least 1, such
  that $p_1$ and $p_k$ are of Type 1 or 2 w.r.t.\ $\Pi $, for some
  $i,j\in\{1,2,3\}$, $i\neq j$, $N_{\Pi }(p_1)\subseteq S_i$,
  $N_{\Pi }(p_k)\subseteq S_j$, $p_1$ has a neighbor in
  $S_i\setminus\{y\}$, $p_k$ has a neighbor in $S_j\setminus\{y\}$, at
  least one of $p_1,p_k$ has a neighbor in
  $(S_i\cup S_j) \setminus \{ x_i,x_j\}$ and no node of
  $P\setminus\{p_1,p_k\}$ has a neighbor in $\Pi $.

\item Let $P=p_1\ldots p_k$ be a crossing of $\Pi $ such that for
  some $i,j\in\{1,2,3\}$, $i\neq j$, $N_{\Pi}(p_1)  =\{ y_i\}$ or
  $\{ y_i,y\}$, $p_k$ is of Type 2 w.r.t.\ $\Pi $ and
  $N_{\Pi }(p_k)\subseteq S_j\setminus\{y, y_j\}$.  Moreover, if
  $N_{\Pi }(p_1) =\{ y_i\}$ then $S_i$ has length at least 3.  Then
  we say that $P$ is a {\it crosspath} of $\Pi $ (from $y_i$ to
  $S_j$).  We also say that $P$ is a {\em $y_i$-crosspath} of
  $\Pi $.

\item If $P=p_1\ldots p_k$ is a crossing of $\Pi $ such that $p_1$
  and $p_k$ are of Type 2 w.r.t.\ $\Pi $ and neither is adjacent to
  $\{ y, y_1, y_2, y_3, x_1, x_2, x_3\}$, then $P$ is a {\it
    loose crossing} of $\Pi $.
\end{itemize}

A long pyramid with a loose crossing is a P-graph. To see this,
consider a 1-skeleton made of a chordless cycle $C$ together with
three chordless paths $P_1,P_2,P_3$, all of length at least 2,
such that for $i\in \{ 1,2,3\}$, $P_i\cap C=\{ v_i\}$, and $v_1,v_2,v_3$
are pairwise distinct and nonadjacent.
  The three pendant edges of the paths
receive label~1, and the special clique has size 1.

A long pyramid with a crosspath is also a P-graph.  The special clique
$K$ is $\{y_i, y\}$ (when $N_{\Pi }(p_1) =\{ y_i\}$) or
$\{y_i, y, p_1\}$ (when $N_{\Pi }(p_1) =\{ y_i, y\}$), so it has
size 2 or 3.  It is easy to check that removing $K$ yields the line
graph of a tree that has two vertices of degree~3 and four pendant
edges that receive labels 1, 1, 2, 2 when $|K|=2$ and 1, 1, 2, 3, when
$|K|=3$.

\begin{lemma}\label{crossings}
  Let $G$ be a (theta, wheel, diamond)-free graph.  If
  $P=p_1\ldots p_k$ is a crossing of a $\Pi =3PC(x_1x_2x_3,y)$
  contained in $G$, then $P$ is a crosspath or a loose crossing of
  $\Pi $.
\end{lemma}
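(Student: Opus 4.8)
The plan is to determine the two attachments $A=N_\Pi(p_1)\subseteq S_i$ and $B=N_\Pi(p_k)\subseteq S_j$, and to show that the only positions compatible with $G$ being (theta, wheel)-free are those describing a loose crossing or a crosspath; every other position will be defeated by a theta or a wheel. Since $p_1,p_k$ are of Type~1 or~2, the attachments $A,B$ are cliques of size at most~$2$ (Lemma~\ref{node-attach}), hence — being contained in the chordless path $S_i$, resp.\ $S_j$ — each is a single vertex or an edge of that path.

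First, if both ends are of Type~1, say $A=\{w_1\}$ and $B=\{w_2\}$, then $P$ together with the two arcs of the hole on $S_i\cup S_j$ between $w_1$ and $w_2$ is a theta $3PC(w_1,w_2)$ — here $w_1\not\sim w_2$, since the crossing condition forbids $\{w_1,w_2\}=\{x_i,x_j\}$. So at least one end is of Type~2; say $p_k$ is. Next I rule out Type~1 ends other than $y_i$: if $A=\{w\}$ with $w\neq y_i$ (so $w$ is internal to $S_i$ or $w=x_i$), I would build a theta $3PC(w,y)$ from $\rho_1$ (the subpath of $S_i$ from $w$ up to $y$), $\rho_2$ (from $w$ down $S_i$ to $x_i$, then the edge $x_ix_k$ and $S_k$ up to $y$), and $\rho_3$ ($w,p_1,\dots,p_k$ extended through the vertex of $B$ nearest $y$ and up $S_j$ to $y$). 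These are internally disjoint, each of length at least~$2$ (using $w\neq y_i$ for $\rho_1$), and since $\rho_2$ passes through $x_k$ and $\rho_3$ climbs $S_j$ (as $p_k$ is of Type~2) both avoid $x_j$, so no triangle edge nor second neighbour of $p_1,p_k$ is a chord. Hence a Type~1 end attaches exactly at $y_i$ (resp.\ $y_j$).

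It remains to locate the Type~2 ends. For such an end the analogous construction makes the endpoint an apex over an edge, producing a permitted pyramid rather than a theta; this is exactly why interior edge-attachments survive, and the remaining edge-positions are excluded by wheels. If $p_1$ attaches to the edge $\{y_i,s\}$ with $s$ the neighbour of $y_i$ towards $x_i$ (meeting $y_i$ but not $y$), then $y_i$ has exactly the neighbours $y,s,p_1$, and I would place all three on one rim built from $P$, the descent $s,\dots,x_i$, the triangle at $x_k$ and $S_k$, giving a wheel centred at $y_i$. If $p_1$ attaches to the edge of $S_i$ incident to $x_i$, I would aim for a wheel centred at $x_i$, whose rim leaves the inner endpoint of that edge along $P$, descends $S_j$ from $B$ to $x_j$, crosses to $x_k$, and returns up $S_k$ and down $S_i$, with $x_i$ seeing its $S_i$-neighbour, $x_j$ and $p_1$; the point is that this rim is induced exactly when $N_\Pi(p_k)\cap\{y,y_j\}=\emptyset$ — if $p_k$ meets $y$ or $y_j$ then a chord incident to $y$ destroys it and the crossing is a genuine crosspath, otherwise the wheel exists and the position is forbidden. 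Finally, if both ends sit next to the apex, either both are the single vertices $y_i,y_j$, giving the theta above, or some end is adjacent to $y$, giving a wheel centred at $y$ (whose rim is $P$ closed through $S_i$, the triangle and $S_j$, with $y$ seeing $y_i,y_j$ and its neighbours among $p_1,p_k$), which degenerates to a diamond on $\{y,p_1,y_i,p_k\}$ when $P$ has length~$1$; and $A=\{y_i\}$ with $S_i$ of length~$2$ is killed by a wheel centred at $x_i$, explaining why a crosspath with $N_\Pi(p_1)=\{y_i\}$ requires $S_i$ of length at least~$3$.

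Putting the surviving cases together gives exactly the two admissible shapes — both ends Type~2 on interior edges avoiding $\{y,y_i,x_i\}$ and $\{y,y_j,x_j\}$ (a loose crossing), or one end at $y_i$ (with $S_i$ long) or at $\{y,y_i\}$ and the other a Type~2 edge inside $S_j\setminus\{y,y_j\}$ (a crosspath). I expect the Type~2 combination cases to be the main obstacle: each requires choosing the correct wheel centre (one of $y$, $y_i$, $x_i$ or their $j$-counterparts) and then checking that its rim is an induced cycle, and that check is delicate because whether a rim-chord appears is governed by the precise attachment of the partner endpoint — the chord materialises in exactly the allowed (crosspath) configurations and is absent in exactly the excluded ones. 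Keeping this correspondence straight, together with the short-branch and short-$P$ degeneracies where a pyramid or theta collapses into a wheel or a diamond, is where the real care is needed.
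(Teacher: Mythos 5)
Your proposal is correct and follows essentially the same route as the paper's proof: the same case analysis on the two attachments, with thetas killing Type-1 ends away from $y_i$, and wheels centred at $y$, at $y_i$ (resp.\ $y_j$) and at $x_i$ (resp.\ $x_j$) killing the forbidden Type-2 positions and the length-2 branch, leaving exactly crosspaths and loose crossings. The one blemish is in your $x_i$-edge sub-case: the claim that a partner meeting $y_j$ makes the crossing ``a genuine crosspath'' is false in isolation (attachments $\{t,x_i\}$ and $\{y_j,w\}$ form neither admissible shape), but that configuration is already destroyed by your earlier wheel centred at $y_j$ applied to the other end (and in fact your $x_i$-rim, entered at the lower neighbour $w$, is induced there anyway), so read sequentially and symmetrically, as your final synthesis does, the argument is complete.
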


\begin{proof}
   Assume w.l.o.g.\ that $p_1$ has a neighbor in $S_1\setminus \{ y\}$,
  and $p_k$ in $S_2\setminus \{ y\}$.  Not both $p_1$ and $p_k$ can be
  adjacent to $y$, since otherwise $N_{\Pi}(p_1)=\{ y_1,y\}$ and
  $N_{\Pi}(p_k) =\{ y_2,y\}$, and hence $S_1\cup S_2\cup P$
  induces a wheel with center $y$.  Suppose that both $p_1$ and $p_k$
  are of Type 2 w.r.t.\ $\Pi $.
  If $p_1$ is adjacent to $y$, then $P$ is a crosspath, since otherwise
  $p_k$ is adjacent to $y_2$ and not to $y$, and hence
  $S_2\cup S_3\cup P$
  induces a wheel with center $y_2$. So we may assume that neither $p_1$
  nor $p_k$ is adjacent to $y$. If $p_1$ is adjacent to $y_1$, then
  $G[(\Pi  \setminus \{x_2\})\cup P]$ contains a wheel with center $y_1$.
  So $p_1$ is not adjacent to $y_1$, and by symmetry
  $p_k$ is not adjacent to $y_2$.
  If $p_1$ is adjacent to $x_1$, then
  $G[(\Pi  \setminus \{y_2\})\cup P]$ contains a wheel with center $x_1$.
  So $p_1$ is not adjacent to $x_1$, and by symmetry $p_k$ is not adjacent to $x_2$.
  It follows that $P$ is a loose crossing.

  Without loss of generality we may now assume that $p_1$ is of Type 1 w.r.t.\ $\Pi $.
  If $p_k$ is also of Type 1, then  $S_1\cup S_2\cup P$ induces a theta. So
  $p_k$ is of Type 2. If $p_1$ is
  not adjacent to $y_1$, then
  $G[(\Pi \setminus \{ x_2\} )\cup P]$ contains a $3PC(y,\widetilde{y})$, where
  $\widetilde{y}$ is the only neighbor of $p_1$ on $\Pi$.
  So $p_1$ is adjacent to $y_1$. Since $S_1\cup S_2\cup P$ cannot induce a wheel
  with center $y$, $p_k$ is not adjacent to $y$.
  Since $S_2\cup S_3\cup P\cup \{  y_1\}$ cannot induce a wheel with center $y_2$,
  $N_{\Pi } (p_k)\subseteq S_2\setminus \{ y,y_2\}$.
  If $S_1$ is of length 2, then $G[(\Pi  \setminus \{ y_2\} )\cup P]$
  contains a wheel with center $x_1$.
  Therefore $S_1$ is of length at least 3, and hence $P$ is a crosspath.
\end{proof}

\begin{lemma}
  Let $G$ be a (theta, wheel, diamond)-free graph.  If $G$ contains a
  pyramid $\Pi $ with a crossing $P$, then $G[\Pi  \cup P]$ is a P-graph.
\end{lemma}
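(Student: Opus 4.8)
The plan is to apply Lemma~\ref{crossings}, which reduces the problem to the two cases where $P$ is a crosspath of $\Pi$ or a loose crossing of $\Pi$, and then in each case to exhibit an explicit skeleton $R$ and check that $G[\Pi\cup P]$ is exactly the P-graph built from $R$. Two facts are used throughout. First, since $G$ is wheel-free, $\Pi$ is a long pyramid, so by Lemma~\ref{pyrBas} the pyramid by itself is the P-graph on a subdivided claw $R_0$ with special clique $\{y\}$: the three edges of $R_0$ at the centre $c$ of the claw form the triangle $x_1x_2x_3$, and the three pendant edges of $R_0$ (all labelled~$1$) are the neighbours $y_1,y_2,y_3$ of $y$. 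Second, since $p_1$ and $p_k$ are of Type~1 or~2 and no internal vertex of $P$ has a neighbour in $\Pi$ (both from the definition of a crossing), Lemma~\ref{node-attach} forces $N_{\Pi}(p_1)$ and $N_{\Pi}(p_k)$ to be cliques contained in single branches, hence of size at most~$2$ and, when of size~$2$, pairs of consecutive vertices of a branch; this is what lets the skeleton I build reproduce $G[\Pi\cup P]$ with no spurious edges.

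For a loose crossing, $p_1$ and $p_k$ are of Type~2 and attach to consecutive interior vertices of two distinct branches, say $S_1$ and $S_2$, each attachment avoiding $\{y,y_1,y_2,x_1,x_2\}$. Viewed in $R_0$, such a consecutive pair of vertices of a branch is a pair of edges meeting at a degree-$2$ vertex, say $w_1$ on the arm of $R_0$ corresponding to $S_1$ and $w_2$ on the arm corresponding to $S_2$; the avoidance of the special vertices places $w_1$ and $w_2$ at distance at least~$2$ from $c$ and from the leaves. I would obtain $R$ by adding to $R_0$ a path $Q$ of length $k$ from $w_1$ to $w_2$, internally disjoint from $R_0$, whose $k$ edges correspond in order to $p_1,\dots,p_k$; this raises $w_1$ and $w_2$ to degree~$3$. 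The end edge of $Q$ at $w_i$ is then adjacent to exactly the two branch edges meeting at $w_i$ (recovering the Type~2 attachments of $p_1$ and $p_k$), while the internal edges of $Q$ are adjacent to nothing in $\Pi$, so keeping the special clique $\{v_1\}$ (label~$1$, joined to $y_1,y_2,y_3$) makes $G[\Pi\cup P]$ the P-graph on $R$. This $R$ is the announced skeleton: a chordless cycle $C$, running up the $S_1$-arm to $c$, down the $S_2$-arm, and back along $Q$, that carries three pendant paths attached at the pairwise-distinct, pairwise-nonadjacent vertices $w_1,w_2,c$.

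For a crosspath from $y_i$ to $S_j$, the vertex $p_1$ is adjacent to $y_i$ (and possibly to $y$) while $p_k$ is of Type~2 with $N_{\Pi}(p_k)\subseteq S_j\setminus\{y,y_j\}$. Here the special clique grows to $\{y_i,y\}$ when $N_{\Pi}(p_1)=\{y_i\}$ and to $\{y_i,y,p_1\}$ when $N_{\Pi}(p_1)=\{y_i,y\}$, and removing it from $G[\Pi\cup P]$ leaves the line graph of a tree $R$ with two degree-$3$ vertices and four pendant edges; one labels these $1,1,2,2$ when $|K|=2$ and $1,1,2,3$ when $|K|=3$, so that reattaching the clique vertices along equal labels rebuilds $G[\Pi\cup P]$. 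I expect the main obstacle to be the routine but careful bookkeeping needed to certify, in both cases, that $R$ satisfies every defining condition \ref{i:1}--\ref{i:11} of a skeleton. The most delicate points are condition~\ref{i:5} (the degree-$3$ vertices must be pairwise nonadjacent, which is exactly what the Type~2 attachments of $p_1,p_k$ to interior, non-$x_i$ vertices provide), condition~\ref{i:8} on pendant edges incident to degree-$3$ vertices, and, in the crosspath case where the special clique has size~$2$ or~$3$, the labelling conditions~\ref{i:9}, \ref{i:10} and~\ref{i:11}. Once these are verified, $G[\Pi\cup P]$ is by construction the P-graph on $R$, as required.
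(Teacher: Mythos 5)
Your proposal is correct and takes essentially the same route as the paper: Lemma~\ref{crossings} reduces the crossing to the crosspath and loose-crossing cases, and your explicit skeletons (a chordless cycle with three pendant paths of length at least~2 attached at pairwise distinct, nonadjacent vertices and special clique of size~1 for a loose crossing; the tree with two degree-3 vertices and four pendant edges labelled $1,1,2,2$ or $1,1,2,3$ with special clique $\{y_i,y\}$ or $\{y_i,y,p_1\}$ for a crosspath) are exactly the constructions the paper records just before Lemma~\ref{crossings}. Your checking of the delicate conditions (notably \ref{i:5} and \ref{i:8}) is, if anything, more detailed than the paper's ``it is easy to check.''
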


\begin{proof}
  Follows from Lemma~\ref{crossings} and the fact already mentioned
  that a pyramid together with a loose crossing or a crosspath is a
  P-graph.
\end{proof}

Let $S$ be a segment of a P-graph $B$ such that its endnodes are in $K_1$ and $K_2$. Then we say that $S\cup K_1\cup K_2$ is an {\it extended segment} of $B$.

\begin{lemma}\label{l:extendB}
Let $B$ be a P-graph with  special clique $K$ which is contained in a (theta,wheel,diamond)-free graph $G$. Let $P=u\ldots v$ be a path in $G\setminus B$
whose interior nodes have no neighbors in $B$ and one of the following holds:

\begin{itemize}
  \item[(1)] $N_B(u)$ and $N_B(v)$ are cliques of size at least 2 in $B\setminus K$ which are not contained in the same extended segment of $B$.

  \item[(2)] $N_B(u)=K$, where $|K|\geq 2$, and $N_B(v)$ is a clique of size at least 2 which is in $B\setminus K$, but not in an extended clique segment of $B$.

  \item[(3)] $N_B(u)=\{w\}\subseteq K$, and $N_B(v)$ is a clique of size at least 2 in $B\setminus K$ which is not in a extended segment of $B$ incident with $w$.
\end{itemize}
Then $G[B\cup P]$ is a P-graph contained in $G$.
\end{lemma}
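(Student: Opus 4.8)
The plan is to realise $G[B\cup P]$ as the P-graph associated with a skeleton $R'$ obtained from the skeleton $R$ of $B$ by adjoining a single new branch or limb that ``carries'' the path $P$, and then to verify that $R'$ is a genuine skeleton. Throughout I identify each vertex of $B\setminus K$ with an edge of $R$, so that a maximal clique of $B$ lying in $B\setminus K$ is exactly the star of $R$ at a vertex of degree at least $2$; by Lemma~\ref{node-attach} the sets $N_B(u),N_B(v)$ of size at least $2$ are such cliques. In particular $N_B(v)=K_2$ is the star at a vertex $y$ of $R$, and in case~(1) $N_B(u)=K_1$ is the star at a vertex $x$. I first note that we may take $P$ to be chordless: a chord of $P$ together with a hole of $B$ meeting both $N_B(u)$ and $N_B(v)$ (which exists by Lemma~\ref{l:2inAhole}) would create a theta or a wheel, contradicting the hypothesis on $G$.

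Next I build $R'$. In case~(1) I add a new branch $Q=x\,q_1\cdots q_{k-1}\,y$ with fresh internal vertices of degree $2$, keeping the labels and the special clique $K$ unchanged; then $L(Q)$ is precisely the path $P$ (with $u$ the edge at $x$ and $v$ the edge at $y$). In case~(3), where $N_B(u)=\{w\}=\{v_j\}$, I add a new limb from $y$ to a fresh leaf, label its pendant edge $j$, and keep $K$; then $u$ becomes the pendant vertex of the limb (adjacent in $B$ only to $v_j=w$) and the remaining limb edges realise $P$. In case~(2), where $N_B(u)=K$ with $|K|\ge 2$, I add a new limb from $y$ to a fresh leaf, give its pendant edge a brand new label $k+1$, and set $K'=K\cup\{u\}$ (a clique because $u$ is complete to $K$), so that $u$ plays the role of $v_{k+1}$. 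In each case the adjacencies match exactly, since the interior of $P$ has no neighbour in $B$ and the endnodes attach as prescribed; thus $G[B\cup P]$ is the graph obtained from $R'$ by the P-graph construction.

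It then remains to check that $R'$ satisfies the skeleton conditions. The basic conditions~\ref{i:1}--\ref{i:3},~\ref{i:6},~\ref{i:7},~\ref{i:11} are inherited from $R$ once the single new branch or limb is accounted for: the new internal vertices have degree $2$ so no triangle or chord appears, at least three pendant edges survive, and since $|K|\ge 2$ forces $k'\ge 3$ in case~(2) the $k=2$ clause is never newly invoked, while reusing label $j$ in case~(3) merely raises its multiplicity. The extended-segment hypotheses are exactly what kill the forbidden configurations: ``$N_B(u),N_B(v)$ not in a common extended segment'' (case~(1)) prevents $Q$ from being parallel to an existing branch, giving~\ref{i:2}; ``$N_B(v)$ not in an extended clique segment'' (case~(2)) forces every limb already ending at $K_2$ to be a claw segment, hence to carry a label used at least twice, so adjoining the fresh label $k+1$ preserves~\ref{i:9}; and ``$N_B(v)$ not in an extended segment incident with $w$'' (case~(3)) ensures $y$ has no limb already labelled $j$, again preserving~\ref{i:9}. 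Finally, conditions~\ref{i:5} and~\ref{i:8} come almost for free: once all the remaining conditions are established, were $R'$ to violate~\ref{i:5} or~\ref{i:8} then Lemma~\ref{l2} would apply to $G[B\cup P]$ and exhibit a wheel in $G$, which is impossible; hence these two conditions also hold.

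The hard part will be conditions~\ref{i:4} and~\ref{i:10}. For~\ref{i:4} I must show that the new $2$-cutsets arising through the added branch or limb still split $R'$ into chordless $a$--$b$ paths or into components containing a degree-$1$ vertex; I plan to trace how a putative bad cutset of $R'$ restricts to $R$ and then invoke~\ref{i:3}--\ref{i:4} for $R$ together with the degree-$1$ vertex at the new leaf. The genuinely delicate point is~\ref{i:10}: inserting a pendant edge with the reused label $j$ (case~(3)) or the new label $k+1$ (case~(2)) must not destroy the ``at least two labels in every petal'' property, nor the requirement that a union of some but not all petals at an attaching vertex shares a label with its complement. I expect to argue petal by petal, using the extended-segment hypotheses once more to locate, for each affected petal, a second label and a label shared with the rest of $R'$; this case analysis is where I anticipate the bulk of the work.
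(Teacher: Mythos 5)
Your construction is exactly the paper's: in case~(1) you add a branch of length $|P|+1$ joining the two star--centres, in case~(3) a limb at $y$ whose pendant edge reuses the label $j$ of $w$, and in case~(2) a limb with a brand-new label $k+1$ while enlarging the special clique to $K\cup\{u\}$; and you spend the hypotheses in the same places the paper does (Lemma~\ref{l2} for \ref{i:5} and \ref{i:8}, the ``no common extended segment'' assumption for \ref{i:2}, and the assumptions of (2) and (3) for \ref{i:9}, via the same claw-segment/label-multiplicity argument). However, as written your proof has a genuine gap: conditions \ref{i:4} and \ref{i:10} are never verified --- you only announce a plan (``I plan to trace \dots'', ``I expect to argue petal by petal \dots''). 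Since the whole content of the lemma is that $R'$ satisfies all eleven skeleton axioms, leaving two of them as declared intentions means the proof is not complete.

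The gap is closable, and your sketch for \ref{i:4} is essentially what the paper does, but you have miscalibrated where the work lies. In cases (2) and (3) nothing delicate happens: \ref{i:3}, \ref{i:4} and \ref{i:10} only demand that components and petals contain degree-1 vertices or pendant edges with suitable labels, and attaching a limb can only add such vertices and label occurrences, so these conditions pass to $R'$ monotonically; the only genuinely new objects are the petals created at $k_2$ (for instance a new union-of-parallel-limbs petal), and those are settled by \ref{i:7} together with the label-distinctness you already extracted from the hypotheses when proving \ref{i:9}. The one place needing a real argument is case~(1): for \ref{i:4} one splits according to whether zero, one or two vertices of the cutset lie in the interior of $P_R$, reducing respectively to \ref{i:4} for $R$, to \ref{i:3} for $R$, and to the trivial observation that the interior of $P_R$ cuts off a chordless path; for \ref{i:10} one notes that $P_R$ carries no limb and no label, so every petal of $R'$ is a union of petals of $R$ plus vertices of $P_R$ and the label conditions transfer verbatim --- no petal-by-petal use of the extended-segment hypotheses is needed, as those hypotheses are exhausted on \ref{i:2} and \ref{i:9}. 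One further caution: in case~(1) your justification of chordlessness of $R'$ (``the new internal vertices have degree $2$ so no \dots chord appears'') is too quick, since a cycle through $P_R$ closes up a $k_1$--$k_2$ path of $R$ which could a priori have a chord inside $R$; this is patched either by a direct argument using \ref{i:5}, or in the spirit of Lemma~\ref{l2}, by noting that a chorded cycle in $R'$ yields a wheel in $L(R')\subseteq G$, contradicting the hypothesis on $G$.
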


\begin{proof}
Let $K=\{v_1,\ldots,v_k\}$ and let $R$ be the skeleton of $B$.
In all three cases neighbors of $v$ in $B$ are in fact in $L(R)$, and they correspond to some edges of $R$ all incident to a single vertex  $k_2\in R$.
By Lemma \ref{node-attach}, $v$ is adjacent to all vertices that correspond to edges incident to $k_2$. We now consider each of the cases.

\vspace{2ex}

\noindent(1) Let $k_1$ be the vertex of $R$ whose incident edges correspond to vertices of the clique $N_B(u)$ in $L(R)$.
Note that by Lemma \ref{node-attach}, $u$ is adjacent to all vertices that correspond to edges incident to $k_1$.
Construct graph $R'$ from $R$ by adding a
branch $P_R$ between $k_1$ and $k_2$, of length one more than the
length of $P$. We prove that $R'$ is a $k$-skeleton.

By Lemma \ref{l2}, it suffices to check that all conditions other than \ref{i:5} and~\ref{i:8} are met.  Since $P$ is of length
at least 1, $P_R$ is of length at least~2, and thus~(i) holds.  Since $N_B(u)$ and $N_B(v)$ are not contained in the same extended segment of $B$,  no
branch of $R$ contains both $k_1$ and $k_2$, and hence~\ref{i:2} holds.

Note that $R$ and $R'$ have the same degree 1 vertices and the same
limbs.  It follows that~\ref{i:6}, \ref{i:7}, \ref{i:9} and \ref{i:11} hold for
$R'$.

Let $x$ be a cut vertex of $R'$.  Since $R$ is connected, $x$ is not an
internal vertex of $P_R$.  Hence, $x$ is also a cut vertex of $R$ and
every component of $R'\sm x$ contains a union of components of $R\sm x$.  It
follows that~\ref{i:3} holds.  Also, every $x$-petal of $R'$ is a union of some $x$-petals of $R$ and some vertices of $P_R$, and therefore \ref{i:10} holds.

To prove~\ref{i:4} let $\{a, b\}$ be a cutset of $R'$.  If $a$ and $b$
are in the interior of $P_R$, one component of $R'\sm \{a, b\}$ is a
chordless path from $a$ to $b$, and the other contains all the
vertices of $R'$ of degree 1, so~\ref{i:4} holds.  If one of $a$ or
$b$, say $a$, is in the interior of $P_R$, and the other (so, $b$) is
not, then $b$ is a  cut vertex of $R$. Also, every component of $R'\sm
\{a, b\}$ contains a component of $R\sm b$.  Hence~\ref{i:4} holds
because~\ref{i:3} holds for $R$.  Finally, if none of $a$ and $b$ is
in the interior of $P_R$, then $\{a, b\}$ is also a cutset of $R$, and
every components of $R'\sm \{a, b\}$ contains a component of $R\sm \{a,
b\}$.  Therefore,~\ref{i:4} holds for $R'$ because it holds for
$R$. Thus \ref{i:4} holds, and our claim is proven.

\vspace{2ex}

\noindent(2)   Construct graph $R'$ from $R$ by adding a chordless
path $P_R$ of the same length as $P$, whose one endnode is $k_2$ and
the remaining nodes are new.  Note that pendant edges of $R$ are also
pendant edges of $R'$, and $R'$ has one new pendant edge (the one
incident to the vertex of degree 1 in $R'$ that is in $P_R$). Let us assign
label $k+1$ to the new pendant edge. We claim
that $R'$ is a skeleton.  By Lemma \ref{l2}, there is no need
to check~\ref{i:5} and~\ref{i:8}.  Since $P_R$ is a limb,~(i),~\ref{i:2},~\ref{i:6}, \ref{i:7} and \ref{i:11} hold for $R'$ because they hold for $R$
and since in this case $k\geq 2$.

Let us show that~\ref{i:9} holds.  It could be that the limb
that we add to $R$ to build $R'$ is in fact parallel to a limb $Q$
of $R$, that corresponds to a clique segment $S'$ of
$B$. If the label of pendant edge of $Q$ is used only once,
then $N_B(v)$ is contained in an extended  clique segment of $B$ (namely extended segment of $S'$), a contradiction.
So \ref{i:9} holds.

The conditions~\ref{i:3}, \ref{i:4} and~\ref{i:10} hold for $R'$
because they hold for $R$.  Indeed, in $R'$, we added a limb, this
only possibly adds a vertex of degree~1 to a component,  making the
condition easier to satisfy.

\vspace{2ex}

\noindent(3) Let $w=v_i$.  We
build a path $P_R$ of the same length as $P$ and we consider the
graph $R'$ obtained from $R$ by attaching $P_R$ at $k_2$.  Hence, in
$P_R$ there is a pendant edge, and we give it label $i$.  We claim
that $R'$ is a skeleton.  By Lemma \ref{l2}, there is no need
to check~\ref{i:5} and~\ref{i:8}.  Since $P_R$ is a limb,~(i),~\ref{i:2},~\ref{i:6}, \ref{i:7} and \ref{i:11} hold for $R'$ because they hold for $R$.

Condition~\ref{i:9} also holds, since the limb that we add to build $R'$
has pendant edge with label $i$ that is now used at least twice,
and it is not parallel to some other limb with pendant edge $i$
by the condition of the lemma.

The conditions~\ref{i:3}, \ref{i:4} and~\ref{i:10} hold for $R'$
because they hold for $R$.  Indeed, in $R'$ we added a limb, which
only possibly adds a vertex of degree~1 to a component,  making the
condition easier to satisfy.
\end{proof}

\begin{lemma}\label{path-attach}
  Let $G$ be a (theta, wheel, diamond)-free graph, and let $B$ be the
  P-graph contained in $G$ with special clique $K=\{v_1,\ldots,v_k\}$
  and skeleton $R$, such that $k$ is maximum, and among all P-graphs
  contained in $G$ and with special clique of size $k$, $B$ has the
  maximum number of segments.  Let $P=u\ldots v$ be a chordless path
  in $G\setminus B$ such that $u$ and $v$ both have neighbors in $B$
  and no interior node of $P$ has a neighbor in $B$.  Then one of the
  following holds:
\begin{itemize}
\item[(1)] $N_B(P)\subseteq K'$, where $K'\in {\cal K}$.
\item[(2)] There exists a segment $S$ of $B$, of length at least 1, whose endnodes are in
$K_1\cup K_2$ where $K_1,K_2\in {\cal K}$, such that $N_B(P)\subseteq K_1\cup K_2\cup S$.
Moreover, if $u$ (resp. $v$) has a neighbor in $K_i \setminus S$, for some $i\in \{ 1,2\}$,
then $u$ (resp. $v$) is complete to $K_i$.
\end{itemize}
\end{lemma}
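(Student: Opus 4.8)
The plan is to assume that neither outcome (1) nor (2) holds and to derive a contradiction, either by exhibiting a theta or a wheel in $G$ (impossible since $G$ is (theta, wheel)-free) or by producing a P-graph in $G$ that is strictly larger than $B$ (impossible by the choice of $B$). First I would record the reductions: since the interior of $P$ has no neighbor in $B$, we have $N_B(P)=N_B(u)\cup N_B(v)$, and by Lemma~\ref{node-attach} each of $N_B(u)$ and $N_B(v)$ is a nonempty clique, hence a single vertex, a whole clique of $\mathcal{K}$, or an edge contained in a single segment. Thus each attachment already lives in a well-defined local piece of $B$ (a member of $\mathcal{K}$, or the extended segment of one segment), and the content of the lemma is that these two pieces coincide up to the slack permitted by (1) and (2).

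The main engine is Lemma~\ref{l:extendB}, whose three hypotheses describe exactly when $G[B\cup P]$ is again a P-graph: in case (2) the special clique acquires the new label $k+1$, and in cases (1) and (3) the number of segments strictly increases while $k$ is unchanged. Hence, whenever $N_B(u)$ and $N_B(v)$ fall under one of these three configurations but are not confined to a common extended segment (respectively a common clique of $\mathcal{K}$), the matching case of Lemma~\ref{l:extendB} contradicts the maximality of $B$ (case~(2) contradicting maximality of $k$, cases~(1) and~(3) contradicting maximality of the number of segments). Conversely, when they are confined to a common $K'\in\mathcal{K}$ one reads off outcome~(1), and when confined to a common extended segment $K_1\cup K_2\cup S$ one reads off outcome~(2). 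I would therefore run a case analysis on the pair of types of $N_B(u)$ and $N_B(v)$ (both of size $\ge 2$ in $B\setminus K$; one equal to $K$; one a single vertex of $K$), feeding each pair into the appropriate case of Lemma~\ref{l:extendB}.

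Two kinds of attachment escape Lemma~\ref{l:extendB} and require direct arguments. The first is a single-vertex attachment $N_B(u)=\{a\}$ with $a\notin K$: here I would use Lemma~\ref{l:2inAhole} to put $a$ and a vertex of $N_B(v)$ on a common hole $H$ of $B$, and then use Lemma~\ref{l:Hv} (an external vertex sees at most a $2$-clique of a hole) together with $P$ to build a $3$-path configuration. For instance, if $N_B(u)=\{a\}$ and $N_B(v)=\{b\}$ with $a\ne b$ nonadjacent, the two arcs of a hole through $a,b$ together with the path $a\,u\cdots v\,b$ form a theta; this forces $a$ and $b$ to be equal or adjacent, hence localized into a clique of $\mathcal{K}$ or a single segment. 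The second is a size-$2$ attachment that meets $K$ without equalling it, of the form $\{v_i,p\}$ with $p$ a pendant vertex; this edge lies in the extended segment of the leaf segment at $v_i$, and I would establish its localization by the same hole/crossing analysis combined with the constraints on the other endpoint.

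The step I expect to be the main obstacle is precisely this last layer of bespoke cases, because there Lemma~\ref{l:extendB} gives no leverage and the contradiction must come from choosing a hole with a prescribed intersection with the big clique $N_B(v)$: a hole meeting that clique in a single vertex produces a theta, whereas a hole meeting it in an edge produces only a (permitted) pyramid, so the connectivity results of Section~\ref{sec:conn} must be invoked to select the right hole (or, failing that, to force $a$ to be close to $N_B(v)$). A secondary technical point is the verification of the \emph{moreover} clause of~(2): if $u$ has a neighbor in $K_i\setminus S$, then, since $N_B(u)$ is a maximal clique contained in $K_1\cup K_2\cup S$ and, by condition~\ref{i:2}, the two end-cliques of a segment share no vertex outside the segment, that maximal clique can only be $K_i$ itself, so $u$ is complete to $K_i$.
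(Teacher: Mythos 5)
Your overall architecture does match the paper's (the paper also assumes the conclusion fails, feeds the ``good'' configurations into Lemma~\ref{l:extendB} to contradict the two-level maximality of $B$, and kills the degenerate single-vertex/single-vertex case with a hole from Lemma~\ref{l:2inAhole} plus a theta). But there is a genuine gap, and you have located it yourself: the cases you defer as ``the main obstacle'' --- one attachment a single vertex $u'\notin K$ while the other is a big clique of $\mathcal K$ or an edge of a segment, and more generally all attachments involving leaf segments --- are not a residual technicality; they are the bulk of the paper's proof (the entire reduction showing that the weak conclusion (2$'$) implies (1) or (2), plus Cases~4--6 of the main argument). Your sketch for them (``choose a hole meeting $N_B(v)$ in a single vertex to get a theta'') does not by itself work, because in many configurations no such hole through $u'$ exists, and what the paper actually does is build specific pyramids via Lemmas~\ref{l:bPyr}, \ref{l:clawHole}, \ref{l:clawCliqueHole}, \ref{l:clawSC}, \ref{l:conn4segm}, \ref{l:conn3segm+1vertex}, \ref{l:pyramidSv}, \ref{l:pyramidSS} and then classify $P$ against them via Lemma~\ref{crossings}. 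In particular you never use the one maximality mechanism that is indispensable when $k=1$: if $P$ turns out to be a crosspath of a pyramid $\Pi\subseteq B$, then $G[\Pi\cup P]$ is a P-graph whose special clique has size $2$ or $3$, contradicting the maximality of $k$. Lemma~\ref{l:extendB} gives no contradiction in those situations, so without this crosspath mechanism several subcases (e.g.\ Cases 5, 6.2.1, 6.3.1, 6.4.1 in the paper) cannot be closed.

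A second, related error is in your verification of the \emph{moreover} clause of (2). You argue that if $u$ has a neighbor in $K_i\setminus S$ then $N_B(u)$, ``being a maximal clique contained in $K_1\cup K_2\cup S$,'' must equal $K_i$. But Lemma~\ref{node-attach} allows $N_B(u)$ to be a \emph{single} vertex, and a single neighbor $u'\in K_i\setminus S$ is exactly the case that violates the clause; excluding it is not a consequence of clique maximality but is precisely the long (2$'$)$\Rightarrow$(2) reduction (about a page and a half of theta/wheel/pyramid constructions in the paper). So as written, the proposal proves the lemma only in the configurations covered by Lemma~\ref{l:extendB} together with the single-vertex/single-vertex theta; the remaining cases, which are the heart of the statement, are asserted rather than proved.
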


\begin{proof}
  Before proving the theorem, note that in the proof, conclusion (2)
  can be replaced by a weaker conclusion :

  \begin{itemize}
  \item[(2')] There exists a segment $S$ of $B$, of length at least 1,
    whose endnodes are in $K_1\cup K_2$ where $K_1,K_2\in {\cal K}$,
    such that $N_B(P)\subseteq K_1\cup K_2\cup S$.
   \end{itemize}

   Indeed, if (2') is satisfied, then (1) or (2) is  satisfied.
   Let us prove this.  Suppose that (2') holds, but neither (1) nor
   (2) does.  Up to symmetry, and by Lemma \ref{node-attach}, this means that $N_B(u)$ is a
   single vertex $u'$ of $K_1 \setminus S$.  If $N_B(v)$ is also a
   single vertex $v'$, then by Lemma~\ref{l:2inAhole}, $P$ together
   with a hole that goes through $u'$ and $v'$ forms a theta (note that since (1) does not hold, $v'\in (S\cup K_2)\setminus K_1$
   and hence since $R$ has no parallel branches by \ref{i:2}, $u'v'$ is not an edge).
   By Lemma \ref{node-attach}, we may therefore assume that $N_B(v)=K_2$ or $N_B(v)$ is a clique of size 2 in $S$.

   We first suppose that $K\not\in \{ K_1,K_2\}$.
   In $R$, $N_B(u)$ is an edge $y_1y_1'$, where $y_1'$ is a branch vertex and $S$ corresponds to a branch
   $P'=y_1'\ldots y_2'$. We apply Lemma \ref{l:bPyr} to $y_1,y_1'$ and $y_2'$.
   Let $P_1,P_2$ and $P_3$ be the three paths obtained and suppose that label $i$ is used on pendant edges of two
   of these paths. Then the graph induced by $L(P_1\cup P_2\cup P_3)$ together with $S\setminus K_1$, $P$ and $K$
   contains a $3PC(u',v_i)$ (note that by \ref{i:8}, $u'v_i$ is not an edge).

   Next suppose that $K_1=K$ and let $u'=v_i$.
   First observe that if $N_B(v)=K_2$ and there exists a segment $S'$ of $B$ with endnode $v_i$ and an endnode in $K_2$,
   then $P$ satisfies (2) w.r.t.\ $S'$. So this cannot happen. It follows that if $N_B(v)\cap K=\emptyset$ then by part (3)
   of Lemma \ref{l:extendB}, the maximality of $B$ is contradicted.
   So let $v_j\in N_B(v)\cap K$, where $v_j\neq v_i$, and let $S'$ be a segment of $B$ with endnode $v_i$.
   Let $Q$ be a direct connection from $S'$ to $S$ in $B\setminus K$. Then $G[S\cup S'\cup P\cup Q]$ is a wheel with center $v_j$,
   a contradiction.

   Therefore $K_2=K$.
   First suppose that $u'$ is a vertex of an internal segment of $B$. Then by Lemma \ref{l:clawHole}, there exists a hole $H$
   that contains $u'$ and a vertex $v_j\in K$ such that neighbors of $v_j$ in $H$ are in $B\setminus K$.
   If $S$ is not contained in $H$, then $G[H\cup P\cup (S\setminus K_1)]$ contains a $3PC(u',v_j)$ (note that since $u'$
   belongs to an internal segment of $B$, $u'v_j$ is not an edge).
   So $S$ is contained in $H$, and hence $v_j$ is an endnode of $S$. If $N_B(v)=K$ then $G[H\cup P]$ is a theta.
   So $N_B(v)\neq K$.
   In $R$, $u'$ is an edge $y_1y_1'$, where $y_1'$ is a branch vertex, and $S$ corresponds to a limb $P'=y_1'\ldots y_2'$.
   Let $X$ be the set of all degree 1 vertices of $R$ incident with pendant edges labeled with $j$ not including $y_2'$
   (note that $X$ is nonempty) and $Y$ the set of all other degree 1 vertices of $R$ not including $y_2'$.
   If in $(R\setminus P') \cup \{ y_1'\}$ there are vertex-disjoint paths $P_1$ and $P_2$ from $\{ y_1',y_1\}$ to $\{ X,Y\}$,
   then $G[L(P_1\cup P_2)\cup P\cup (S\setminus K_1)\cup \{ u'\}]$ contains a $3PC(u',v_j)$.
   So, by Lemma \ref{MengerModified}, there is a vertex $x$ in $R$ that separates $\{ y_1',y_1\}$ from $X\cup Y$ in $(R\setminus P')\cup \{ y_1'\}$,
   and therefore $\{ y_2',x\}$ is a cutset of $R$ that contradicts \ref{i:4}.

   It follows that $u'$ is an endnode of a leaf segment $S'$ of $B$.
   Since (2) does not hold for $P$ and $S'$, $N_B(v)\neq K$ and hence $N_B(v)$ is
   a clique of size 2 in $S$.
   Let $v_i$ (resp. $v_j$) be the endnode of $S$ (resp. $S'$) in $K$. Suppose $v_i=v_j$.
   Then by \ref{i:9}, $R$ has no branches, so by (i), $G[(B\setminus (S\cap K_1)) \cup P]$ contains a
   $3PC(u',v_i)$ (note that $u'v_i$ is not an edge by \ref{i:8}).
   So $v_i\neq v_j$. By \ref{i:9} there is a segment
   $S''\not\in \{ S,S'\}$ with an endnode in $\{ v_i,v_j\}$.
   Note that $S''$ does not have an endnode in $K_1$.
    Let $Q$ be a direct connection from $S''$ to $K_1$
   in $B\setminus K$.
    Then $G[(S\setminus K_1)\cup S'\cup S''\cup P\cup Q]$ either contains a $3PC(u',v_i)$ (if $S''$ has endnode $v_i$)
    or $3PC(u',v_j)$ (if $S''$ has endnode $v_j$, note that in this case by \ref{i:8}, $u'v_j$ is not an edge).
    Therefore, if (2') holds then (1) or (2) holds.

   \vspace{2ex}

   We are now back to the main proof.  Suppose the conclusion of the
   theorem fails to be true.  By
   Lemma~\ref{node-attach}, it suffices to consider the following
   cases.
   \\
   \\
   {\bf Case 1:} For some $K_1, K_2\in {\cal K}\setminus \{ K\}$,
   $N_B(u)=K_1$ and $N_B(v)=K_2$.
   \\
   \\
   Since (1) does not hold, $K_1\neq K_2$. Let us first prove that no segment of $B$ has endnodes in $K_1\cup K_2$.

   Suppose to the contrary that some segment $S$ of $B$ has endnodes
   in $K_1\cup K_2$.  Since (2') does not hold, $S$ is of length 0,
   say $S=x$. So $S$ is an internal segment of $B$.  Let $e_x$ be the
   edge of $R$ that corresponds to $x$.  By Lemma~\ref{l11}, $e_x$ is
   a cut edge of $R$, and hence $x$ is a cut vertex of $L(R)$.  For
   $i=1,2$, let $C_i$ be the connected component of $L(R)\setminus x$
   that contains $K_i\setminus x$. Note that the endnodes of $e_x$ in
   $R$ are cut vertices of $R$, and hence by \ref{i:3}, $C_i$ has a
   pendant vertex, for $i=1,2$.  It follows that $B$ contains a
   chordless $wz$-path $Q$, where $w\in K_1\setminus x$,
   $z\in K_2\setminus x$ and no interior node of $Q$ has a neighbor in
   $K_1\cup K_2$.  But then $P\cup Q\cup \{ x\}$ induces a wheel with
   center $x$. Therefore, no segment of $B$ has an endnode in $K_1\cup K_2$.

    Now, by part (1) of Lemma \ref{l:extendB}, this contradicts the maximality of $B$.

\vspace{2ex}

\noindent {\bf Case 2:} For some $K_1\in {\cal K}\setminus \{ K\}$,
$N_B(u)=K_1$ and $N_B(v)=K$.
\\
\\
Since (2') does not hold,
there is no (leaf) segment with endnodes in $K_1$ and $K$, and so by parts (2) and (3) of Lemma \ref{l:extendB} and maximality of $B$, this case is impossible.

\vspace{2ex}

\noindent {\bf Case 3:} For some segment $S$ of $B$, $N_B(u)=K$ and
$N_B(v)\subseteq S$.
\\
\\
Since (2') does not hold, $S$ is an internal segment of $B$.  Let $v'$
be a neighbor of $v$ in $S$.  Apply Lemma~\ref{l:clawHole} to $B$ and
$v'$.  This provides a hole $H$ in $B$ that contains $v'$ and a single node of $K$.
Note that $H$ contains $S$ because
$S$ is a segment.  If $v'$ is the only neighbor of $v$ in $S$, then
$H$ and $P$ form a theta, a contradiction.  So, by Lemma~\ref{l:Hv},
for some vertex $v''$ of $S$ adjacent to $v'$, $N_B(v)=\{ v',v''\}$. By parts (2) and (3) of Lemma \ref{l:extendB} this contradicts the maximality of $B$.

\vspace{2ex}

\noindent {\bf Case 4:} For some $K_1\in {\cal K}\setminus \{ K\}$ and
some internal segment $S$ of $B$, $N_B(u)=K_1$ and
$N_B(v)\subseteq S$.
\\
\\
Let $K_2$ and $K_3$ be the end cliques of $S$. Since (1) and (2') do
not hold, $K_1\not\in \{ K_2,K_3\}$. We apply
Lemma~\ref{l:clawCliqueHole} to $K_1$, $K_2$ and $K_3$.  This provides
three paths $P_1$, $P_2$ and $P_3$.
If $N_B(v)=\{ v'\}$ then $P_1,P_2,P_3,S$ and $P$  induce a theta. So by Lemma \ref{node-attach}
$N_B(v)=\{ v',v''\}$ where $v'$ and
$v''$ are two adjacent vertices of $S$. By part (1) of Lemma \ref{l:extendB}
this contradicts the maximality of $B$.

\vspace{2ex}

\noindent {\bf Case 5:} For some $K_1\in {\cal K}\setminus \{ K\}$ and
some leaf segment $S$ of $B$, $N_B(u)=K_1$ and $N_B(v)\subseteq S$.
\\
\\
Let the endnodes of $S$ be in cliques $K$ and
$K_2\in {\cal K}\setminus \{ K\}$.  Since $S$ is a leaf segment of
$B$, it is of length at least 1. Since (2') does not hold,
$K_1\neq K_2$.  Let $v'$ be a neighbor of $v$ in $S$, and let
$P_1=w_1\ldots w$, $P_2=w_2\ldots w$ and $P_S=w_s\ldots w$ be paths
obtained when Lemma ~\ref{l:clawSC} is applied to segment $S$ and clique $K_1$.

First, let us assume that
$N_B(v)=\{v'\}$. If $v'\neq w$ and
$v'$ is not adjacent to $w$, then $G[P_1\cup P_2\cup P_S\cup P]$
induces a $3PC(v',w)$, a contradiction.
So, $v'=w$ or $v'w$ is an edge. If
$v'\in K$, then by part (3) of Lemma \ref{l:extendB} and maximality
of $B$, there is a segment $S'$ with one endnode in $K_1$ and the other
$v'$. But then $P$ and $S'$ satisfy condition (2'). So, $v'\not\in K$,
and hence $v'w$ is an edge.
Suppose $k=1$. Let $z$ be a node of $K_1$ that belongs to an internal segment of $B$ (note that since $K_1\neq K_2$, and since
$R$ is connected by \ref{i:1}, it follows that $R$ has a branch and $z$ exists by \ref{i:9}).
By Lemma \ref{l:pyramidSv} there exists a pyramid $\Pi$ contained in $B$ such that $S$ and $z$
belong to different paths of $\Pi$ and $|\Pi\cap K_1|=2$. So, $N_{\Pi}(u)$ is an edge of a path of $\Pi$ that contains $z$.
Note that since $G$ is wheel-free, $\Pi$ is a long pyramid and by Lemma \ref{crossings} $P$ is a crosspath of $\Pi$.
But then $G[\Pi \cup P]$ is a P-graph with special clique of size greater than 1, contradicting our choice of $B$
(since $k=1$). Therefore, $k>1$.
Let $Q_1$ and
$Q_2$ be paths obtained when Lemma \ref{l:conn3segm+1vertex} is applied to
$S$ and $K_1$.
Then $G[Q_1\cup Q_2\cup S\cup P]$ induces a theta or a wheel, a contradiction.
So, by Lemma \ref{node-attach}, $N_B(v)$ is a clique of size 2.

If $N_B(v)\cap K=\emptyset$, then, by (1) of Lemma
\ref{l:extendB}, we have a contradiction to the maximality of $B$. So,
$N_B(v)=\{v',v''\}$, where $v''\in K$. If $v''\neq w$, then $G[P_1\cup P_2\cup P_S\cup P]$
induces a wheel, a contradiction. So, $v''=w$. If $k=1$, then by
Lemma \ref{l:pyramidSv} there exists a pyramid $\Pi$,
contained in $B$, such that $S$ and $z$ are in different paths of $\Pi$,
where $z$ is
a node of $K_1$ that belongs to an internal segment of $B$ (it exists by the same argument as in the previous paragraph).
Note that $w$ is the center of the claw of $\Pi$.
But then
$G[\Pi\cup P]$ is a P-graph whose special clique is of size 3, contradicting our choice of $B$.
So $k>1$.
Let $Q_1$ and
$Q_2$ be paths obtained when Lemma \ref{l:conn3segm+1vertex} is applied to
$S$ and a node $z\in K_1$ that is on an internal segment of $B$.
Then $G[Q_1\cup Q_2\cup S\cup P]$ induces a wheel, a contradiction.

\vspace{2ex}

\noindent {\bf Case 6:} For some distinct segments $S$ and $S'$ of
$B$, $N_B(u)\subseteq S$ and $N_B(v)\subseteq S'$.
\\
\\
Let $K_1$ and $K_2$ (resp.\ $K_3$ and $K_4$) be the end cliques of $S$
(resp.\ $S'$). We divide this case in several subcases.

\vspace{2ex}
\noindent{\bf Case 6.1:} $K\not\in\{K_1,K_2,K_3,K_4\}$.
\vspace{2ex}

\noindent We may assume that $K_3\not\in\{K_1,K_2\}$. Let $P_1$, $P_2$ and $P_3$ be the 3 paths obtained by applying Lemma \ref{l:clawCliqueHole} to $K_1$, $K_2$ and $K_3$. Suppose that $N_B(u)$ is a single vertex $u'$. Since (2') does not hold, $v$ has a neighbor in $S'\setminus (K_1\cup K_2)$. But then $G[P_1\cup P_2\cup P_3\cup P\cup S\cup (S'\setminus(K_1\cup K_2))]$ contains a theta. So, by Lemma \ref{node-attach}, $N_B(u)$ is a clique of size 2 in $S$, and similarly $N_B(v)$ is a clique of size 2 in $S'$. By (1) of Lemma \ref{l:extendB}, this contradicts the maximality of $B$.

\vspace{2ex}
\noindent{\bf Case 6.2:} $K_4=K$ and $K\not\in\{K_1,K_2\}$.
\vspace{2ex}

\noindent
{\bf Case 6.2.1:} $k=1$.
\vspace{2ex}

\noindent
By Lemma \ref{l:pyramidSv}, $B$ contains  a pyramid $\Pi =3PC(x_1x_2x_3,v_1)$ such that $S$ and $S'$ are contained in different paths of $\Pi$.
By part (1) of Lemma \ref{l:extendB}, $P$ cannot be a loose crossing of $\Pi$. So by Lemma \ref{crossings},
$P$ is a crosspath of $\Pi$. But this contradicts our choice of $B$ since $k=1$.

\vspace{2ex}

\noindent
{\bf Case 6.2.2:} $k\geq 2$.
\vspace{2ex}

\noindent
For $i\in  \{ 1,2\}$, let $x_i$ be the endnode of $S$ that is in $K_i$, and let $v_i$ and $v_{S'}$ be the endnodes of $S'$.
First suppose that $K_2=K_3$.
Let $P_1=w\ldots w_1, P_2=w\ldots w_2$ and $P_{S'}=w\ldots v_{S'}$ be the three paths obtained by applying Lemma \ref{l:clawSC}
to $S'$ and $K_1$ (where for $i\in \{ 1,2\}$, $P_i\cap K_i=\{ w_i\}$).
Then $G[P_1\cup P_2\cup P_{S'}\cup S]$ is a pyramid $\Pi =3PC(x_2w_2v_{S'},w)$, and $S$ and $S'$ belong to different paths of $\Pi$.
Suppose $v_i=w$ and $N_B(v)=v_i$. If $u$ has a unique neighbor $u'$ in $S$, then $G[\Pi \cup P]$ contains a $3PC(u',w)$,
and otherwise by part (3) of Lemma \ref{l:extendB} our choice of $B$ is contradicted. So either $v_i\neq w$ or $N_B (v)\neq \{ v_i\}$.
But then by Lemma \ref{crossings}, $P$ is a crosspath or a loose crossing of $\Pi$, and therefore by Lemma \ref{l:extendB}
our choice of $B$ is contradicted.

So by symmetry, $K_3\not\in \{ K_1,K_2\}$.
Let $P_1=w\ldots w_1, P_2=w\ldots w_2$ and $P_3=w\ldots w_3$ be the three paths obtained by applying Lemma \ref{l:clawCliqueHole}
to $K_1$, $K_2$ and $K_3$ (so $w\in K$ and  for $i\in \{ 1,2,3\}$, $P_i\cap K_i=\{ w_i\}$).
Let $Q$ be a direct connection from $K_3$ to $P_1\cup P_2$ in $B\setminus K$ and $H$ a hole in $G[P_1\cup P_2\cup P_3\cup S\cup S'\cup Q]$ that contains $S$ and $S'$. Suppose $N_B(v)=\{ w\}$. If $N_B(u)=\{ u'\}$, then $G[H\cup P]$ contains a $3PC(u',w)$. So $N_B(u)$ is a clique of size 2 in $S$, and hence by Lemma \ref{l:extendB} our choice of $B$ is contradicted.
So $N_B(v)\neq \{ w\}$. Now, let us assume that $v_i\neq w$ and that one of the paths $P_1$ and $P_2$ contains a vertex from $K\setminus\{w,v_i\}$. Note that then $P_3\cap S'=\emptyset$. Let $\Pi'$ be a pyramid contained in $G[P_1\cup P_2\cup P_3\cup S\cup Q]$ (this pyramid contains $S$ and its claw has center $w$). Then $G[P\cup P_3\cup Q\cup S']$ contains a crossing of $\Pi'$ with an endnode in $u$, and hence $u$ has two neighbors in $S$ (since $u$ is not adjacent to $w$). If $N_B(v)=\{v'\}\neq\{v_i\}$, then $G[P_1\cup P_2\cup P_3\cup S\cup S'\cup P]$ contains a $3PC(u',w)$, and if $N_B(v)=\{v_i\}$, then our choice of $B$ is contradicted by Lemma \ref{l:extendB}. So, we may assume that $N_B(v)=\{v',v_i\}$,  since otherwise  our choice of $B$ is contradicted by Lemma \ref{l:extendB}. But then $G[P_1\cup P_2\cup S\cup S'\cup P]$ contains a wheel with center $v_i$, a contradiction.

So $v_i=w$ or  $P_1\cap K,P_2\cap K\in\{\{w\},\{w,v_i\}\}$. Then $G[P_1\cup P_2\cup P_3\cup S\cup S'\cup Q]$ contains a pyramid $\Pi$ (whose claw has center $w$ or $v_i$), such that $S$ and $S'$ belong to different paths of $\Pi$. By our choice of $B$ and Lemma \ref{l:extendB}, $P$ cannot be a loose crossing of $\Pi$.
So, by Lemma \ref{crossings}, $P$ is a crosspath of $\Pi$. If the center of the claw of $\Pi$ is $v_i$ and $v_i\neq w$, then $G[P_1\cup P_2\cup P_3\cup S\cup S'\cup P]$ contains a theta or a wheel, a contradiction. So, the center of the claw of $\Pi$ is $w$. Also $w=v_i$, since otherwise our choice of $B$ is contradicted by Lemma \ref{l:extendB}.
This implies that $S'$ is a claw segment of $B$. Let $Q_1$ and $Q_2$ be the paths obtained when Lemma \ref{l:conn3segm+1vertex} is applied to $K_1$ and $S'$ (we assume that $Q_1\cap K_1\neq\emptyset$). Furthermore, if $Q_1$ does not contain $S$, then we can extend $Q_1$ such that it contains  one neighbor of $u$ and such that we do not introduce edges between this new path and $Q_2$. But then, $G[Q_1\cup Q_2\cup S'\cup P]$ contains a wheel or a theta, a contradiction.

\vspace{2ex}
\noindent{\bf Case 6.3:} $K_2=K_4=K$ and $K_1\neq K_3$.
\vspace{2ex}

\noindent
Let $v_i$ (resp. $v_j$) be the endnode of $S$ (resp. $S'$) in $K$, and let $x_S$ (resp. $x_{S'}$) be the other  endnode of $S$ (resp. $S'$).

\vspace{2ex}
\noindent{\bf Case 6.3.1:} $v_i=v_j$.
\vspace{2ex}

\noindent
First, let $k=1$. By Lemma \ref{l:pyramidSS} $B$ contains a pyramid $\Pi =3PC(x_1x_2x_3,v_i)$ such that $S$ and $S'$ are contained in different paths of $\Pi$.
Since $P$ does not satisfy (1) and does not satisfy (2') w.r.t.\ $S$ nor w.r.t.\ $S'$, $P$ is a crossing of $\Pi$.
By the choice of $B$ and since $k=1$, $P$ cannot be a crosspath of $\Pi$. So by Lemma \ref{crossings}, $P$ is a loose crossing of $\Pi$.
But then by part (1) of Lemma \ref{l:extendB}, our choice of $B$ is contradicted.

So, let $k\geq 2$. Let $P_1$ and $P_2$ be paths obtained when Lemma \ref{l:conn4segm} is applied to $S$ and $S'$. Since $P$ does not satisfy (2'),
node  $u$ (resp.\ $v$) has a neighbor in $S\setminus\{v_i\}$ (resp.\ $S'\setminus\{v_i\}$). If $u$ or $v$ is adjacent to $v_i$, then
$G[S\cup S'\cup P\cup P_1\cup P_2]$ contains a wheel with center $v_i$.
Therefore, neither $u$ nor $v$ is adjacent to $v_i$.
Suppose that $u$ has the unique neighbor $u'$ in $S$. If $u'v_i$ is not an edge, then $G[S\cup S'\cup P\cup P_1]$ contains a
$3PC(u',v_i)$. If $u'v_i$ is an edge, then $G[S\cup S'\cup P\cup P_1\cup P_2]$ contains a wheel with center $v_i$  or a theta.
So by Lemma \ref{node-attach}, $N_B(u)$ is a clique of size 2 that belongs to $S\setminus v_i$, and by symmetry
$N_B(v)$ is a clique of size 2 that belongs to $S' \setminus v_i$.
By (1) of Lemma \ref{l:extendB}, this contradicts the maximality of $B$.

\vspace{2ex}
\noindent{\bf Case 6.3.2:} $v_i\neq v_j$.
\vspace{2ex}

\noindent
In particular, $k\geq 2$.
First suppose that $S$ and $S'$ are both clique segments of $B$.
Let $P_1=w\ldots w_1$, $P_2=w\ldots w_2$ and $P_{S'}=w\ldots w_{S'}$ be the three paths obtained by applying Lemma \ref{l:clawSC}
to $S'$ and $K_1$. So $w\not\in \{v_i,v_j\}$. Since (2') does not hold $u$ (resp.\ $v$) has a neighbor in $S\setminus \{v_i\}$
(resp.\ $S'\setminus \{v_j\}$). Let $u'$ be a neighbor of $u$ in $S\setminus \{v_i\}$.
If either $N_B (u)=\{ u'\}$ or $u$ is adjacent to $v_i$, then $G[P_1\cup P_2\cup S\cup (S'\setminus \{ v_j\} )]$ contains a wheel with center $v_i$
or a $3PC(u',w)$. So by Lemma \ref{node-attach}, $N_B(u)$ is a clique of size 2 in $S\setminus K$, and by symmetry $N_{B}(v)$ is a clique of size 2 in $S'\setminus K$.
But then by (1) of Lemma \ref{l:extendB} our choice of $B$ is contradicted.

So w.l.o.g.\ we may assume that $S$ is a claw segment.
Let $Q$ be a direct connection from $K_1$ to $K_3$ in $B\setminus K$. Let $S_1$ be a segment of $B$ distinct from $S$ that has endnode
$v_i$. Let $Q_1$ be a direct connection from $S_1$ to $Q$ in $B\setminus K$. Then $G[S\cup S'\cup S_1\cup Q\cup Q_1]$
is a pyramid $\Pi =3PC(x_1x_2x_3,v_i)$, in which $S$ and $S'$ are contained
in different paths of $\Pi$.
$P$ cannot be a loose crossing of $\Pi$, since otherwise by (1) of Lemma \ref{l:extendB}
our choice of $B$ is contradicted. Therefore by Lemma \ref{crossings}, $P$ is a $v_i'$-crosspath of $\Pi$, where $v_i'$ is the neighbor
of $v_i$ in $S$ (since $v$ has a neighbor in $S'\setminus \{v_j\}$). In particular, $uv_i'$ is an edge and $N_B(u)\subseteq \{ v_i',v_i\}$.
By \ref{i:11} there exists a leaf segment $S_2$ of $B$ with endnode $v_k$ such that either $v_k\in K\setminus \{ v_i,v_j\}$,
or $v_k=v_j$ and $S_2\neq S'$. Let $Q_2$ be a direct connection from $S_2$ to $\Pi$. Then $G[\Pi \cup S_2\cup Q_2]$
contains a $3PC(v_i',v_j)$ (if $N_B(u)=\{v_i'\}$ and $j\neq k$) or a wheel with center $v_i$ (otherwise).

\vspace{2ex}
\noindent{\bf Case 6.4:} $K_2=K_4=K$ and $K_1=K_3$.
\vspace{2ex}

\noindent
Let $v_i$ (resp. $v_j$) be the endnode of $S$ (resp. $S'$) in $K$, and let $x_S$ (resp. $x_{S'}$) be the other  endnode of $S$ (resp. $S'$).

\vspace{2ex}
\noindent{\bf Case 6.4.1: }$k=1$.
\vspace{2ex}

\noindent
Then by \ref{i:9} $R$ has no branches. By (i) $B$ contains a pyramid $\Pi =3PC(x_Sx_{S'}x,v_1)$ where $S$ and $S'$ are paths of $\Pi$.
Since $P$ does not satisfy (1) and does not satisfy (2') w.r.t.\ $S$ nor w.r.t.\ $S'$, $P$ is a crossing of $\Pi$.
By the choice of $B$ and since $k=1$, $P$ cannot be a crosspath of $\Pi$. So by Lemma \ref{crossings}, $P$ is a loose crossing of $\Pi$.
But then by part (1) of Lemma \ref{l:extendB}, our choice of $B$ is contradicted.

\vspace{2ex}
\noindent{\bf Case 6.4.2:} $k\geq 2$.
\vspace{2ex}

\noindent
Then by \ref{i:9}, $v_i\neq v_j$ and w.l.o.g.\ $v_i$ is an endnode of a leaf segment $S_1\neq S$.
Let $Q$ be a direct connection from $S_1$ to $K_1$ in $B\setminus K$. Then $S,S',S_1$ and $Q$ induce a pyramid $\Pi =3PC(x_Sx_{S'}x,v_i)$
(where $x$ is an endnode of $Q$) such that $S$ and $S'v_i$ are paths of $\Pi$.
Since $P$ does not satisfy (1) and it does not satisfy (2') w.r.t.\ $S$ nor w.r.t.\ $S'$, $P$ is a crossing of $\Pi$.
By (1) of Lemma \ref{l:extendB} and our choice of $B$, $P$ cannot be a loose crossing of $\Pi$.
So by Lemma \ref{crossings}, $P$ is a crosspath of $\Pi$.
If $P$ is a $v_j$-crosspath of $\Pi$ then by part (3) of Lemma \ref{l:extendB}, our choice of $B$ is contradicted.
So for the neighbor $v_i'$ of $v_i$ in $S$,
$P$ is a $v_i'$-crosspath of $\Pi$. In particular, $u$ is adjacent to $v_i'$ and $N_{B} (u)\subseteq \{ v_i,v_i'\}$, and $N_{B} (v)$ is a clique of size 2 of
$S'\setminus \{v_j\}$.
By \ref{i:11} there exists a leaf segment $S_2$ with endnode $v_k\in K\setminus \{ v_i\}$ such that $S_2\neq S'$.
Let $Q_2$ be a direct connection from $S_2$ to $\Pi \setminus (S \cup S' \cup K)$.
But then $G[(\Pi \setminus \{ x_{S'} \} )\cup S_2\cup Q_2]$ either contains a $3PC(v_i',v_j)$ (if $k=j$ and $uv_i$ is not an edge)
or a wheel with center $v_i$ (otherwise).
\end{proof}

Let $\Pi=3PC(x_1x_2x_3,y)$ be a pyramid contained in a graph $G$.
A {\em hat} of $\Pi$ is a chordless path $P=p_1\ldots p_k$ in $G \setminus \Pi$
such that $p_1$ and $p_k$ both have a single neighbor in $\Pi$ and they are adjacent to
different nodes of $\{ x_1,x_2,x_3\}$, and no interior node of $P$ has a neighbor in $\Pi$.

\begin{lemma}\label{hat}
Let $G$ be a (theta, wheel)-free graph. If $G$ contains a pyramid with a hat, then $G$ has a clique cutset.
\end{lemma}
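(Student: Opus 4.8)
The plan is to exhibit a clique cutset of the form $\{x_1,x_2\}$, where $x_1,x_2$ are the two corners of the triangle of $\Pi$ that are hit by the two ends of the hat.

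First, a reduction and setup. If $G$ contains a diamond then it has a clique cutset by Lemma~\ref{diamond} and we are done, so I assume $G$ is (theta, wheel, diamond)-free; in particular Lemma~\ref{node-attach} applies to any P-graph contained in $G$, and since $G$ is wheel-free the pyramid $\Pi=3PC(x_1x_2x_3,y)$ is long. Write $\Pi$ with branches $S_1,S_2,S_3$ from $y$ to $x_1,x_2,x_3$, and take the hat to be $P=p_1\dots p_k$ with $N_\Pi(p_1)=\{x_1\}$ and $N_\Pi(p_k)=\{x_2\}$ (so $k\ge 2$, since $p_1,p_k$ must see \emph{different} corners). Two holes are now visible, and they share exactly the edge $x_1x_2$: the \emph{hat hole} $H_0=x_1p_1\dots p_kx_2x_1$, and the \emph{apex hole} $H_{12}=x_1 S_1 y S_2 x_2 x_1$. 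Moreover $x_3$ is adjacent to $x_1$ and $x_2$ and is joined to $y$ by $S_3$, while $x_3$ has no neighbour among $p_1,\dots,p_k$ (hat condition) and $y$ has no neighbour on $H_0$ (longness).

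The key observation reduces everything to a connectivity statement. Let $C$ be the connected component of $G\setminus\{x_1,x_2\}$ containing $\{p_1,\dots,p_k\}$ (these lie in one component because $p_1\dots p_k$ is a path whose interior misses $\Pi$). Since $C$ is a component of $G\setminus\{x_1,x_2\}$, every vertex outside $C$ with a neighbour in $C$ lies in $\{x_1,x_2\}$, that is $N_G(C)\subseteq\{x_1,x_2\}$, which is a clique. Hence it suffices to prove that $C$ is a proper subset of $V(G)\setminus\{x_1,x_2\}$: then $N_G(C)$ is a clique cutset (or $G$ is disconnected and the empty set is one). Now in $(\Pi\cup P)\setminus\{x_1,x_2\}$ the vertex $x_3$, the interiors of $S_1,S_2$, all of $S_3$ and $y$ lie in a single ``apex'' component, disjoint from $\{p_1,\dots,p_k\}$; so it is enough to show $y\notin C$.

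Assume for contradiction that $y\in C$. Then $G\setminus\{x_1,x_2\}$ contains a path from the hat to the apex side, and I take a direct connection $M=m_1\dots m_\ell$ from $\{p_1,\dots,p_k\}$ to $V(\Pi)\setminus\{x_1,x_2\}$ whose interior avoids $\Pi\cup P$ (it has length at least $1$, since no $p_i$ is adjacent to $\Pi\setminus\{x_1,x_2\}$). I control the two ends of $M$: by Lemma~\ref{l:Hv} applied to $H_0$, $N_{H_0}(m_1)$ is a clique of size at most $2$, so $m_1$ attaches to the hat in one vertex or two consecutive ones; by Lemma~\ref{node-attach} applied to the P-graph $\Pi$, the attachment $N_\Pi(m_\ell)$ is a single vertex, or a maximal clique of $\Pi$, i.e.\ an edge of some branch $S_i$ (possibly an edge $yy_i$) or the triangle $\{x_1,x_2,x_3\}$. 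The proof then splits according to where $M$ meets $\Pi$. If $M$ lands in the interior of $S_1$ (symmetrically $S_2$), I would build a wheel with centre $x_1$, whose rim comes from an arc of $S_1$, the connection $M$, and an arc of the hat, with $x_1$ seeing $x_2$, $x_3$ and its two rim-neighbours. If $M$ lands on $S_3\cup\{y\}$, I would combine $H_0$, $H_{12}$, $S_3$ and the triangle to produce either a wheel centred at $x_3$ (here $M$ supplies the extra neighbour of $x_3$ on a common hole that the pyramid alone cannot provide) or a theta between $x_3$ and the landing vertex; the cases where $N_\Pi(m_\ell)$ is the triangle or an edge are handled similarly. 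Each outcome contradicts the (theta, wheel)-freeness of $G$, so $y\notin C$, and $N_G(C)\subseteq\{x_1,x_2\}$ is the desired clique cutset.

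The main obstacle is this last case analysis. The triangle edges $x_1x_2$, $x_1x_3$, $x_2x_3$ are chords of almost every large cycle one writes down, so the delicate point is to route $M$ together with the chosen arcs so that the resulting cycle is genuinely chordless, and to check that the intended centre has exactly the required neighbours on the rim. This has to be verified across all landing positions of $M$ and all its permitted attachment types (single vertex, branch edge, or full triangle), using the longness of $\Pi$ and Lemma~\ref{l:Hv} to exclude the unwanted chords.
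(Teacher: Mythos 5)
There is a fatal flaw: the set you propose as a cutset is wrong, so the statement you reduce to (that $y\notin C$, i.e.\ that $\{x_1,x_2\}$ separates the hat from $y$) is simply false, and no case analysis can establish it. Counterexample: glue two long pyramids along their triangle. Concretely, take $\Pi=3PC(x_1x_2x_3,y)$ with branches $x_is_iy$ of length $2$, and add a second apex $p$ with paths $pq_1x_1$, $pq_2x_2$, $pq_3x_3$, where each $q_i$ is adjacent in $\Pi$ to $x_i$ only and $p$ is adjacent to $q_1,q_2,q_3$ only. This graph is (theta, wheel, diamond)-free: it is two long pyramids (P-graphs) glued along a clique, and one can also check directly that every hole lies inside one of the two pyramids together with the triangle (any hole meeting both apex sides would have to contain two triangle vertices non-consecutively, giving a chord), whence no vertex has three neighbours in any hole, and there is no theta. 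The path $q_1pq_2$ is a hat of $\Pi$. Yet $G\setminus\{x_1,x_2\}$ is connected: $q_1\,p\,q_3\,x_3\,s_3\,y$ is a path from the hat to $y$ avoiding $x_1,x_2$, so $y\in C$. (The lemma itself holds here, but via the cutset $\{x_1,x_2,x_3\}$, not yours.) The precise point where your case analysis breaks is exactly this configuration: when $M$ starts at an \emph{interior} vertex of the hat and $N_\Pi(m_\ell)=\{x_3\}$, the graph $\Pi\cup P\cup M$ contains no theta and no wheel --- it merely contains another pyramid with the same triangle and apex an interior hat vertex --- so the contradiction you promise for the case ``$M$ lands on $S_3\cup\{y\}$'' does not exist. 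Your own list of possible attachments of $m_\ell$ (single vertex, branch edge, or the full triangle) even includes this case, but it cannot be ``handled similarly''.

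The repair requires both ingredients of the paper's proof that your proposal omits. First, the separator must contain $x_3$ and, since the graph is only assumed diamond-free, also every vertex whose attachment over $\Pi$ is the whole triangle: the paper works with $S=\{x_1,x_2,x_3\}\cup\{u\in G\setminus\Pi : N_\Pi(u)=\{x_1,x_2,x_3\}\}$, a clique by diamond-freeness. Second, the argument is run under the assumption that $G$ has \emph{no} clique cutset: then $S$ is not a cutset, so there is a direct connection $Q$ from the hat to $\Pi\setminus\{x_1,x_2,x_3\}$ in $G\setminus S$, and the hat $P$ and $Q$ are chosen jointly so that $|V(P)\cup V(Q)|$ is minimum. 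This joint minimality is what disposes of attachments of $Q$ to the triangle (they would yield a smaller hat, as in your example where the connection through $x_3$'s side produces a new hat of $\Pi$), and only after that do the wheel/theta/crossing arguments (via Lemma~\ref{crossings}) go through. Without enlarging the cutset and without the minimality device, the approach cannot be completed.
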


\begin{proof}
Let $P=p_1\ldots p_k$ be a hat of $\Pi=3PC(x_1x_2x_3,y)$ contained in $G$, with w.l.o.g.\ $N_{\Pi}(p_1)=\{ x_1\}$ and
$N_{\Pi}(p_k)=\{ x_2\}$. Assume that $G$ does not have a clique cutset. Then by Lemma \ref{diamond}, $G$ is diamond-free.
Let $S$ be the set comprised of $\{ x_1,x_2,x_3\}$ and all nodes $u\in G\setminus \Pi$ such that $N_{\Pi}(u)=\{ x_1,x_2,x_3\}$.
Since $G$ is diamond-free, $S$ is a clique. Let $Q=q_1\ldots q_l$ be a direct connection from $P$ to
$\Pi \setminus \{ x_1,x_2,x_3\}$ in $G \setminus S$. We may assume w.l.o.g.\ that a hat $P$ and direct connection $Q$ are chosen so that $|V(P)\cup V(Q)|$
is minimized.

By Lemma \ref{node-attach}, $q_l$ either has a single neighbor in $\Pi$ or $N_{\Pi} (q_l)$ are two adjacent nodes of a path of $\Pi$.
If a node $q_i$, $i<l$, is adjacent to a node of $\{ x_1,x_2,x_3\}$, then by definition of $Q$, $q_i$ has a single neighbor in $\Pi$.
If at least two nodes of $\{ x_1,x_2,x_3\}$ have a neighbor in $Q\setminus q_l$, then a subpath of $Q\setminus q_l$ is a hat of
$\Pi$, contradicting the minimality of $P\cup Q$. So at most one node of $\{ x_1,x_2,x_3\}$ has a neighbor in $Q\setminus q_l$.
Suppose $x_i$, for some $i\in \{ 1,2,3\}$, has a neighbor in $Q\setminus q_l$, and let $q_t$ be such a neighbor with highest index.
Then $N_{\Pi} (q_l)\subseteq S_i$, since otherwise $q_t\ldots q_l$ is a crossing of $\Pi$ that contradicts Lemma \ref{crossings}.
If $i=3$ then a subpath of $(P\setminus p_k) \cup Q$ or $(P\setminus p_1) \cup Q$ is a hat of $\Pi$, contradicting the minimality of $P\cup Q$.
So w.l.o.g.\ $i=1$. But then $G[(\Pi \setminus y_2)\cup P\cup Q]$ contains a wheel with center $x_1$.
Therefore, no node of $\{ x_1,x_2,x_3\}$ has a neighbor in $Q\setminus q_l$.

Without loss of generality we may assume that $P\cup \{ q_1\}$ contains a chordless path $P'$ from $p_1$ to $q_1$ that does not contain $p_k$.
Then $N_{\Pi} (q_l)\subseteq S_1$, since otherwise the path induced by $P'\cup Q$ is a crossing of $\Pi$ that contradicts Lemma \ref{crossings}.
If $N_{\Pi} (q_l)=\{ y\}$ then $P'\cup Q\cup S_1\cup S_3$ induces a $3PC(x_1,y)$. So $q_l$ has a neighbor in $S_1\setminus \{ x_1,y\}$.
If $p_1$ is the unique neighbor of $q_1$ in $P$, then $G[P\cup Q\cup (\Pi \setminus y_2)]$ contains a wheel with center $x_1$.
So $P\cup \{ q_1\}$ must contain a chordless path $P''$ from $p_k$ to $q_1$ that does not contain $p_1$. But then the path induced by
$P''\cup Q$ is a crossing of $\Pi$ that contradicts Lemma \ref{crossings}.
\end{proof}

\section{Proof of Theorem \ref{main}}
\label{sec:proofM}

A {\em strip} is a triple $(H, A, A')$ that satisfies the
following:
\begin{itemize}
\item[(i)] $H$ is a graph and $A$ and $A'$ are disjoint non-empty
  cliques of $H$;
\item[(ii)] every vertex of $H$ is contained in a chordless path of $H$
  whose one endnode is in $A$, the other is in $A'$, and no
  interior node is in $A\cup A'$ (such a path is called an {\em
    $AA'$-rung}).
\end{itemize}

Let $B$ be a P-graph with special clique $K$, and let $V_0$ be the set of all vertices of $B$ that are the unique vertex of some segment of length zero. A \emph{strip system} $\mathcal S$ is any graph obtained from $B$ as follows:

\begin{itemize}
\item for every segment $S = u \dots v$ of $B$ of length at least 1, let $(H_S, Q_{u, S}, Q_{v, S})$ be
  a strip, such that $Q_{u,S}\cap S=\{u\}$ and $Q_{v,S}\cap S=\{v\}$;

\item $V(\mathcal S)$ is the union of vertices of $H_S$, for all segments $S$ of $B$ of length at least 1, and $V_0$;

\item if $S=u\ldots v$, $u\in K$, is a claw segment of $B$, then $Q_{u,S}=\{u\}$;

\item for segments $S$ and $S'$ of length at least 1, if $S\cap S'=\emptyset$, then $V(H_S)\cap V(H_{S'})=\emptyset$;

\item  a clique $Q_{x, S}$ is complete to a clique $Q_{x', S'}$
  whenever $x$ and $x'$ are in the same clique of $\cal K$;

\item  a clique $Q_{x, S}$ is complete to $x'$
  whenever $x$ and $x'$ are in the same clique of $\cal K$ and $x'\in V_0$;

\item these are the only edges of the strip system.
\end{itemize}

Furthermore, for a clique $K_1\in\mathcal K$, we denote $Q_{K_1}=\bigcup_{u\in K_1}Q_{u,S}\cup K_1$
(where $S$ is a segment of length at least 1 that contains $u$).

Note that any P-graph can be seen as a strip system, where every segment of length at least 1
is replaced by a strip equal to the segment.  So, strip system can be
seen as a way to thicken a P-graph.  In the other direction, consider
a graph $T$ induced by $V_0$ and vertices of one rung from every strip of a
strip system ${\cal S}$. We say that $T$ is a {\em template} of
${\cal S}$.  Note that in particular $B$ is a strip system with unique
template, namely $B$.


\begin{lemma}\label{template}
Let $G$ be a (theta, wheel)-free graph. Then
every template of a strip system of $G$ is a P-graph.
\end{lemma}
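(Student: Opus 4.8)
The plan is to exhibit the template $T$ as a graph built from a skeleton-candidate exactly as in the definition of a P-graph, and then to use Lemma~\ref{l2} to promote this candidate to a genuine skeleton. First I would construct a candidate skeleton $R_T$ from the skeleton $R$ of the underlying P-graph $B$: keep every branch vertex and every degree-$1$ vertex of $R$, keep every branch of length $1$ (these correspond precisely to the length-$0$ internal segments, i.e.\ to the vertices of $V_0$), and replace every branch or limb $L$ of $R$ that corresponds to a segment $S$ of length at least $1$ by a chordless path whose length matches that of the rung $R_S$ selected from the strip $H_S$ in the template; pendant-edge labels (equivalently, limb labels) are carried over unchanged. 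Since each rung has length at least $1$ — its endnodes lie in the two disjoint cliques $Q_{u,S}$ and $Q_{v,S}$ — every replacement path has length at least $2$, so $R_T$ is well defined. I would then identify $T$ with the P-graph obtained from $R_T$ by the usual construction: the rungs become the line graphs of the replacement branches and limbs, the $V_0$-vertices become the line-graph vertices of the length-$1$ branches, the restrictions of the thickened cliques $Q_{K_1}$ to $T$ become the big cliques of $L(R_T)$, and the special clique $K$ is reattached by the same labelling rule. Thus $T = L(R_T) \cup K$ as graphs.

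Next I would check that $R_T$ satisfies every condition of a $k$-skeleton except possibly (v) and (viii). The key observation is that conditions (i)--(iv), (vi), (vii), (ix), (x) and (xi) depend only on the coarse combinatorial data of the skeleton — connectivity, triangle-freeness and chordlessness, the set of branch vertices with their degrees and mutual incidences, the cut-vertex and two-cutset structure, and the multiset of pendant-edge labels — none of which changes when one alters the \emph{lengths} of branches and limbs while preserving their endpoints, their labels, and the property of having length at least $1$. Concretely, $R$ and $R_T$ have the same branch vertices with the same degrees, the same pattern of parallel branches and limbs, the same degree-$1$ vertices with the same labels, and the same cut behaviour (lengthening a branch merely inserts degree-$2$ vertices, which can only make (iii), (iv) and (x) easier to satisfy); triangle-freeness and chordlessness are preserved since subdivision introduces no new short cycle or chord. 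These transfers are routine but constitute the bulk of the write-up.

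Finally I would invoke Lemma~\ref{l2}. By the two previous steps, $T$ is a graph satisfying all conditions of being a P-graph except that its skeleton $R_T$ might fail (v) or (viii). Were (v) or (viii) to fail, Lemma~\ref{l2} would produce a wheel inside $T$; but $T$ is an induced subgraph of the (theta, wheel)-free graph $G$, hence wheel-free, a contradiction. Therefore $R_T$ satisfies (v) and (viii) as well, $R_T$ is a genuine $k$-skeleton, and $T$ is a P-graph. I expect the main obstacle to be the bookkeeping of the identification step — matching the thickened cliques $Q_{K_1}$ and the $V_0$-vertices to the big cliques and length-$1$ branches of $R_T$ so that $T$ is literally $L(R_T)\cup K$ — rather than any genuinely difficult argument, since once $R_T$ is in place the skeleton conditions transfer almost verbatim and Lemma~\ref{l2} does the real work of discharging (v) and (viii).
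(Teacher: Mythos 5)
Your proof is correct and follows exactly the paper's route: the paper dispatches this lemma in one line as an immediate consequence of the definitions and Lemma~\ref{l2}, which is precisely your argument --- build the modified skeleton by adjusting branch/limb lengths to match the chosen rungs, observe that all skeleton conditions except \ref{i:5} and \ref{i:8} transfer since they depend only on the homeomorphism type, labels and degree-1 structure, and let Lemma~\ref{l2} together with wheel-freeness of $G$ discharge \ref{i:5} and \ref{i:8}. The only caveat is that the template's special clique is a transversal of the cliques $Q_{v_i,S}$ rather than necessarily $K$ itself (a rung at a clique segment may end at a vertex of $Q_{v_i,S}\setminus\{v_i\}$), but this is exactly the bookkeeping you flagged and does not affect the argument.
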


\begin{proof}
We claim that given a P-graph $B$ and a strip system obtained from $B$ (that is contained in $G$), replacing one segment $S=u\ldots v$ of $B$ by a corresponding rung $S'=u'\ldots v'$ yields another P-graph $B'$. The lemma then follows from this claim by induction on the number of segments. So let us prove the claim.

Let $K$ be the special clique of $B$ and $R$ its skeleton. If $u$ or $v$, say $u$, is in $K$, then let $K'=\{u'\}\cup K\setminus\{u\}$; otherwise let $K'=K$.

By \cite{harary.holzmann:lgbip} a graph is (claw,diamond)-free if and only if it is the line graph of a triangle-free graph. So, $B\setminus K$ is (claw,diamond)-free, and hence the same holds for $B'\setminus K'$, i.e.\ $B'\setminus K'$ is the line graph of a triangle-free graph $R'$. Observe that $R'$ can be obtained from $R$ by changing the length of a single branch or limb. Furthermore, in this way no branch of length 1 is obtained since the two cliques of any strip are disjoint. Therefore, $R'$ satisfies all conditions of the definition of a skeleton, except possibly the ones that are concerned with the lengths of the limbs. So, we only need to check that $R'$ satisfies \ref{i:8}, which is true by Lemma \ref{l2}.
\end{proof}

We are ready to prove Theorem \ref{main}.

\vspace{2ex}

\noindent
{\em Proof of Theorem \ref{main}:}
Let $G$ be a (theta, wheel)-free graph, and assume that $G$ does not
have a clique cutset and that it is not a line graph of triangle-free
chordless graph.  By Lemma \ref{diamond}, $G$ is diamond-free and by
Theorem \ref{dt1-p1}, $G$ contains a pyramid, and hence a long pyramid (since $G$ is wheel-free).
So, by Lemma \ref{pyrBas},
$G$ contains a P-graph. Let $B$ be a P-graph contained in $G$ with
maximum size of the special clique $K$, say $|K|=k$, and such that out
of all P-graphs with special clique of size $k$ it has the maximum
number of segments. Let $\mathcal K$ be the set that includes all big
cliques of $B$ and $K$, and let $R$ be the skeleton of $B$.
Furthermore, let $\mathcal S$ be a maximal (w.r.t.\ inclusion) strip
system obtained from $B$.

\vspace{2ex}

\noindent{\bf Claim 1.} For every $w\in G\setminus\mathcal S$ either
for some  clique $K_1\in \mathcal K$,  $N_{\mathcal S}(w)=Q_{K_1}$,
or for some segment
$S$ of $B$ of length at least 1, $N_{\mathcal S}(w)\subseteq H_S$.

\vspace{2ex}

\noindent{\it Proof of Claim 1.}
Suppose not. Observe that if for some $K_1\in \mathcal K$, $w$ has two distinct neighbors in $Q_{K_1}$, then since $G$ is
diamond-free, $w$ is complete to $Q_{K_1}$.

First suppose that $w$ is adjacent to a vertex $v\in V_0$. By Lemma \ref{star}, $X=(N_G(v)\setminus (\{ w \} \cup \mathcal S)) \cup \{ v\}$
is not a star cutset of $G$, so there exists a chordless path $P=w\ldots w'$ in $G\setminus (\mathcal S \cup X)$
such that $w'$ has a neighbor $u'$ in $\mathcal S \setminus \{ v \}$ and no interior node of $P$ has a neighbor in $\mathcal S$.
By definition of a strip  and $\mathcal S$, there is a template of $\mathcal S$ that contains $u'$ and $v$.
By Lemma \ref{template} we may assume w.l.o.g.\ that $B$ contains $u'$ and $v$.
By Lemma \ref{path-attach} applied to $P$ and $B$, and since $w'$ is not adjacent to $v$, $N_B(P)\subseteq K'\in \mathcal K$.
In particular, $K'\in \mathcal K \setminus \{ K\}$, $u',v\in K'$ and $N_B(w')=\{ u'\}$.
By Lemma \ref{newsec4l}, $B$ contains a pyramid $\Pi=3PC(u'vx,y)$, with $x\in K'$ and $y\in K$.
If $N_{\Pi} (w)=\{ v\}$ then $P$ is a hat of $\Pi$, contradicting Lemma \ref{hat}.
So there exists $v'\in N_{\Pi} (w)\setminus \{ v\}$. By Lemma \ref{node-attach}, $N_{\Pi} (w)$ is a maximal clique of $\Pi$.
If $N_{\Pi} (w) \neq \{ u' ,v,x\}$ then $G[\Pi \cup P]$ contains a wheel with center $v$. So $N_{\Pi} (w) = \{ u' ,v,x\}$, and hence $w$ is complete
to $Q_{K'}$. It follows that $w$ has a neighbor $u''$ in $\mathcal S \setminus Q_{K'}$. Let $B'$ be a template of $\mathcal S$
that contains $v$ and $u''$. By Lemma \ref{template}, $B'$ is a P-graph. By Lemma \ref{node-attach}, $N_{B'} (w)$ is a maximal clique of $B'$,
and in particular $vu''$ is an edge. It follows that for some $K''\in \mathcal K \setminus \{ K',K\}$, $w$ is complete to $Q_{K''}$.
By Lemma \ref{l:clawHole} applied to $B$ and $v$, there exists a hole $H$ in $B$ that contains $v$, and hence it contains a vertex of $K'\setminus \{ v\}$
and a vertex of $K''\setminus \{ v\}$. But then $(H,w)$ is a wheel, a contradiction.

Therefore, $w$ is not adjacent to a vertex of $V_0$. It follows that there exist distinct segments $S$ and $S'$ of $B$, both of length at least 1,
such that $w$ has a neighbor $u$ in $H_S$,  a neighbor $v\in H_{S'}$, and there is no clique $K_1\in \mathcal K$ such that $u$ and $v$ are both in $Q_{K_1}$.
Let $B'$ be a template of $\mathcal S$ that contains $u$ and $v$ (it exists by definition of a strip and $\mathcal S$).
But then by Lemma \ref{template}, $B'$ and $w$ contradict Lemma \ref{node-attach}.
This completes the proof of Claim 1.

\vspace{2ex}

\noindent{\bf Claim 2.} Let $S$ be a segment of $B$ of length at least 1 with endnodes $u\in K_1$ and  $v\in K_2$, where $K_1$ and $K_2$ are distinct cliques of
$\mathcal K$, $K_1\neq K$, and let $(H_S,Q_{u,S},Q_{v,S})$ be the corresponding strip of $\mathcal S$.
Then $G\setminus \mathcal S$ cannot contain a chordless path $P=w_1\ldots w_2$ such that the following hold:
\begin{itemize}
\item $N_{\mathcal S} (w_1)=Q_{K_1}$,
\item $N_{\mathcal S} (w_2)=Q_{K_2}$, or $N_{\mathcal S} (w_2) \subseteq H_S$ and $w_2$ has a neighbor in $H_S\setminus Q_{K_1}$, and
\item no interior node of $P$ has a neighbor in $\mathcal S \setminus Q_{u,S}$.
\end{itemize}

\vspace{2ex}

\noindent{\it Proof of Claim 2.}
Assume such a path exists. Let $H_{S}'=H_S\cup P$ and $Q'_{u,S}=Q_{u,S}\cup \{ w_1\}$.
If $N_{\mathcal S} (w_2)=Q_{K_2}$ and either $K_2\neq K$ or $k>1$, then let $Q'_{v,S}=Q_{v,S}\cup \{ w_2\}$,
and otherwise let $Q'_{v,S}=Q_{v,S}$.
Since $w_2$ has a neighbor in $H_S\setminus Q_{K_1}$, $H'_{S}$ contains a rung with endnode $w_1$ that contains $P$,
so $(H'_{S},Q'_{u,S},Q'_{v,S})$ is a strip. Since, by maximality of $\mathcal S$, $\mathcal S'=\mathcal S \cup P$ cannot be a strip system,
it follows that $S$ is a claw segment (so $K_2=K$) and $N_{\mathcal S} (w_2)=K$ and $k>1$.
Since $S$ is a claw segment of $B$, $Q_{v,S}=\{ v\}$, and there exists another leaf segment $S'$ of $B$ with endnode $v$.
Suppose that a node $u_1$ of $Q_{u,S}$ has a neighbor in interior of $P$. Let $S_1$ be a rung of $H_S$ that contains $u_1$.
By Lemma \ref{template}, $B'=(B\setminus S)\cup S_1$ is a $P$-graph where $S_1$ is a claw segment, so by (viii) of the definition of skeleton, $u_1v$ is not an edge.
Let $H'$ be a hole of $B'$ that contains $S_1$ and $S'$. But then $G[H'\cup (P\setminus w_1)]$ contains a $3PC(u_1,v)$, a contradiction.
Therefore, no node of $\mathcal S$ has a neighbor in interior of $P$. But then by (2) of Lemma \ref{l:extendB}, the choice of $B$ is contradicted.
This completes the proof of Claim 2.

\vspace{2ex}

\noindent{\bf Claim 3.}
For a clique $K_1\in \mathcal K\setminus \{ K\}$, there cannot exist a vertex $w$ of $G\setminus \mathcal S$ such that
$N_{\mathcal S} (w)=Q_{K_1}$.

\vspace{2ex}

\noindent{\it Proof of Claim 3.}
Suppose such a vertex exists. Let $K'$ be a maximal clique of $G\setminus \{w\}$ that contains $Q_{K_1}$.
Note that since $G$ is diamond-free, no node of $G\setminus (K'\cup \{ w\})$ is complete to $Q_{K_1}$.
Since $K'$ cannot be a clique cutset of $G$, there exists a chordless path $P=w\ldots w'$ in $G\setminus (\mathcal S \cup K')$
such that $w'$ has a neighbor $u'$ in $\mathcal S \setminus Q_{K_1}$, no node of $P\setminus\{w\}$ is complete to $Q_{K_1}$,
and no interior node of $P$ has a neighbor in $\mathcal S \setminus Q_{K_1}$. By Claim 1 one of the following two cases hold.

\vspace{2ex}

\noindent{\bf Case 1:} For some segment $S$ of $B$ of length at least 1, $N_{\mathcal S} (w')\subseteq H_S$.
\\
\\
First suppose that $S$ has an endnode $u\in K_1$ and an endnode $v\in K_2$, for $K_2\in \mathcal K\setminus \{ K_1\}$.
By Claim 2, a node of $Q_{K_1}\setminus Q_{u,S}$ must have a neighbor in $P\setminus w$. Let $w''$ be a node of $P\setminus \{w\}$
closest to $w'$ that has a neighbor in $Q_{K_1}\setminus Q_{u,S}$. So, since $G$ is diamond-free and $|K_1|\geq 3$,
$N_{\mathcal S} (w'')=\{ u''\}$, where $u'' \in Q_{K_1}\setminus Q_{u,S}$. Let $B'$ be a template of $\mathcal S$ that contains
$u'$ and $u''$. By Lemma \ref{template} $B'$ is a $P$-graph, and so $B'$ and the $w''w'$-subpath of $P$  contradict Lemma \ref{path-attach}.

So $S$ does not have an endnode in $K_1$.
Let $w''$ be a node of $P$ closest to $w'$ that has a neighbor in $Q_{K_1}$. Let $u''$ be a neighbor of $w''$ in $Q_{K_1}$, and
let $B'$ be a a template of $\mathcal S$ that contains $u'$ and $u''$.
By Lemma \ref{template} $B'$ is a $P$-graph, and so $B'$ and the $w''w'$-subpath of $P$  contradict Lemma \ref{path-attach}.

\vspace{2ex}

\noindent{\bf Case 2:} For some clique $K_2\in \mathcal K \setminus \{ K_1\}$, $N_{\mathcal S} (w')= Q_{K_2}$.
\\
\\
First suppose that there exists a segment $S$ of $B$ of length at least 1 with endnode $u\in K_1$ and an endnode $v\in K_2$.
Then by Claim 2, a node of $Q_{K_1} \setminus Q_{u,S}$ has a neighbor in $P$. Let $w''$ be the node of $P$ closest to
$w'$ that has a neighbor in $Q_{K_1}\setminus Q_{u,S}$. Then $N_{\mathcal S} (w'')=\{ u''\}$.
Let $B'$ be a template of $\mathcal S$ that contains $S$ and $u''$. By Lemma \ref{template} $B'$ is a $P$-graph, and so
$B'$ and the $w''w'$-subpath of $P$ contradict Lemma \ref{path-attach}.

So no segment of $B$ of length at least 1 has an endnode in $K_1$ and an endnode in $K_2$.
Let $w''$ be the node of $P$ closest to $w'$ that has a neighbor $u_1\in Q_{K_1}\setminus Q_{K_2}$. Let $B'$ be a template of $\mathcal S$ that
contains $u_1$. Then by Lemma \ref{template}, $B'$ and the $w''w'$-subpath of $P$ contradict Lemma \ref{path-attach}.

This completes the proof of Claim 3.





\vspace{2ex}

\noindent{\bf Claim 4.}
Let $S$ be a clique segment of $B$ with endnode $v\in K$.
Then $G\setminus \mathcal S$ cannot contain a chordless path $P=w_1\ldots w_2$ such that the following hold:
\begin{itemize}
\item $w_1$ has a neighbor in $H_S\setminus Q_K$,
\item $N_{\mathcal S} (w_2)=Q_{K}$, and
\item no interior node of $P$ has a neighbor in $\mathcal S \setminus Q_{v,S}$.
\end{itemize}

\vspace{2ex}

\noindent{\it Proof of Claim 4.}
Assume such a path exists. By Lemma \ref{template}, w.l.o.g.\ we may assume that $w_1$ has a neighbor in $S\setminus K$.
Let $u$ be an endnode of $S$ different from $v$, and let $K_1\in \mathcal K\setminus \{ K\}$
such that $u\in K_1$. By Claim 3, $N_{\mathcal S} (w_1)\neq Q_{K_1}$, and so by Claim 1,  $N_{\mathcal S} (w_1) \subseteq H_S$.
Let $H'_S=H_S \cup P$ and $Q'_{v,S}=Q_{v,S} \cup \{ w_2\}$.
Then $(H'_S,Q'_{v,S},Q_{u,S})$ is a strip and $\mathcal S'=\mathcal S \cup P$ is a strip system that contradicts our choice
of $\mathcal S$.
This completes the proof of Claim 4.

\vspace{2ex}

\noindent{\bf Claim 5.} For every connected component $C$ of $G\setminus \mathcal S$, there exists a segment $S$ of $B$
of length at least 1 such that
$N_{\mathcal S} (C) \subseteq H_S$.

\vspace{2ex}

\noindent{\it Proof of Claim 5.}
Suppose that a connected component  $C$ of $G\setminus \mathcal S$ does not satisfy the stated property.
Since $Q_K$ is not a clique cutset, some node of $C$ has a neighbor in $\mathcal S \setminus Q_K$.
So by Claims 2 and 3 some node $w_1$ of $C$ has a neighbor in $H_S\setminus Q_K$ for some segment $S$ of $B$ of length at least 1.
So there exists a chordless path $P=w_1\ldots w_2$ in $C$ such that $w_2$ has a neighbor in $\mathcal S\setminus H_S$. We choose $P$ to be a minimal such path.

First suppose that $S$ is an interior segment of $B$, and let $u\in K_1$ and $v\in K_2$ be endnodes of $S$, where $K_1,K_2\in \mathcal K \setminus \{ K\}$.
By Lemma \ref{template} w.l.o.g.\ we may assume that $w_1$ has a neighbor in $S$ and $w_2$ has a neighbor in $B\setminus S$.
By the choice of $P$, no interior node of $P$ has a neighbor in $B$. But then by Lemma \ref{path-attach}, $w_2$ is complete
to $K_1$ or $K_2$, say $K_1$. By Claim 1 $N_{\mathcal S} (w_2)=Q_{K_1}$, contradicting Claim 3. Therefore $S$ is a leaf segment of $B$.

Let $u\in K_1$ and $v\in K$ be the endnodes of $S$. By the choice of $P$, no interior node of $P$ has a neighbor in $\mathcal S \setminus Q_{v,S}$.
Suppose $w_2$ has a neighbor in $\mathcal S \setminus Q_K$.
Then by Lemma \ref{template}, w.l.o.g.\ we may assume that $w_1$ has a neighbor in $S\setminus \{ v\}$ and $w_2$ has a neighbor in $B\setminus (K\cup S)$.
By Lemma \ref{path-attach} and Claims 1 and 3, an interior node of $P$ is adjacent to $v$. Let $w_1'$ be the interior node of $P$ closest to $w_2$ that is
adjacent to $v$. By Lemma \ref{path-attach} applied to $w_1'w_2$-subpath of $P$,  for some leaf segment $S'$ of $B$ with endnode $v$,
$w_2$ has a neighbor in $S'\setminus v$.
Let $K_2$ be the clique in $\mathcal K \setminus \{ K\}$ that contains a node of $S'$. Recall that interior nodes of $P$ do not have neighbors in $B\setminus v$.
Also by Claims 1 and 3 and by Lemma \ref{node-attach}, $N_B(w_1)$ (resp. $N_B(w_2)$) is either a single node or an edge of $S$ (resp. $S'$).
Suppose $k\geq 2$. Then by \ref{i:9} of the definition of skeleton, $K_1\neq K_2$. Let $P_1$ and $P_2$ be the paths obtained by applying Lemma \ref{l:conn4segm}
to $S$ and $S'$. Then $G[S\cup S' \cup P_1\cup P_2\cup P]$ contains a theta or a wheel with center $v$.
So $k=1$.

By Lemma \ref{l:pyramidSS} let $\Pi$ be a pyramid contained in $B$ such that $S$ and $S'$ are contained in different
paths of $\Pi$.
If $w_1$ is adjacent to $v$ then $G[\Pi \cup P]$ contains a wheel with center $v$. So $w_1$ is not adjacent to $v$ and by symmetry neither is $w_2$.
If both $w_1$ and $w_2$ have unique neighbors in $\Pi$, then $G[\Pi \cup P]$ contains a wheel with center $v$ or a theta.
So w.l.o.g.\ $N_B(w_1)=\{ w_1',w_1''\}$ where $w_1'w_1''$ is an edge of $S$.
Then $G[\Pi \cup P]$ contains a pyramid $\Pi '=3PC(w_1w_1'w_1'', v)$. But then, by Lemma \ref{crossings},
$P\setminus \Pi'$ is a crosspath of $\Pi '$ contradicting our choice of $B$ (since $k=1$).
Therefore, $N_{\mathcal S} (w_2)\subseteq Q_K$.

Since $w_2$ has a neighbor outside $S$, $k>1$. Let $v_2$ be a neighbor of $w_2$ in $K\setminus \{ v\}$.
Let $w$ be a node of $B\setminus K$ adjacent to $v_2$, and let $Q$ be a direct connection in $B\setminus K$ from $w$ to $K_1$.
By Lemma \ref{template} w.l.o.g.\ $w_1$ has a neighbor in $S\setminus K$. Note that by Claims 1 and 3, $N_B(w_1)\subseteq S$.
By Lemma \ref{node-attach}, $N_B(w_1)$ is a clique of size 1 or 2 in $S$.
If $v$ has a neighbor in $P$, then $G[S\cup P\cup Q]$ contains a theta or a wheel.
So $v$ has no neighbor in $P$. Then by Lemma \ref{path-attach}, $w_2$ is complete to $K$, and hence by Claim 1, $N_{\mathcal S} (w_2)=Q_K$.
By Claim 4, $S$ is a claw segment of $B$. So there is a node $w'$ of $B\setminus (K\cup S)$ adjacent to $v$. Let $Q'$ be a direct connection in $B\setminus K$
from $w'$ to $K_1$, and let $w_1'$ be the neighbor of $w_1$ in $S\setminus K$ which is the closest to $K_1$. If $w'_1$ is adjacent to $v$, then $G[S\cup P\cup Q]$ is a wheel with center $v$. So,
$w_1'$ is not adjacent to $v$, and hence by Lemma \ref{l:extendB}, $w_1'$ is the unique neighbor of $w_1$ in $S$. But then $G[S\cup P\cup Q']$ is a theta.
This completes the proof of Claim 5.

\vspace{2ex}

Suppose $G \neq B$. Then by Claim 5, there exists a segment $S$ of $B$ of length at least 1
such that either $H_S\neq S$ or a node of $G\setminus \mathcal S$ has a neighbor in $H_S$.
Let $\mathcal C$ be the union of all connected components $C$ of $G\setminus \mathcal S$ that have a node with a neighbor in $H_S$.
By Claim 5, $N_{\mathcal S}(\mathcal C)\subseteq H_S$.
If $S$ is not a claw segment of $B$, then
 $(H_S\cup \mathcal C,G\setminus (H_S\cup \mathcal C))$ is a 2-join of $G$. So we may assume that $S$ is a claw segment of $B$ with an endnode $u\in K$.
 Then, by Claim 5, $((H_S\setminus\{u\}) \cup \mathcal C,(G\setminus (H_S)\cup \mathcal C)\cup \{u\})$ is  a 2-join of $G$
 (note that $Q_{u,S}=\{ u\}$ and by \ref{i:8} of the definition of skeleton, every rung of $H_S$ is of length at least 2).\hfill$\Box$

%

\section{Recognition algorithm}
\label{sec:reco}

In this section we give a recognition algorithm and a structure
theorem for the class of (theta,wheel)-free graph. For this, most of
the necessary work is already done in \cite{twf-p1} (see Sections 6
and 7, where all important steps in the proof are given for
(theta,wheel)-graphs).

To obtain a recognition algorithm for (theta,wheel)-free graphs we
modify the algorithm given in Theorem 7.6 of \cite{twf-p1} for
only-pyramid graphs. In fact, the only modification that should be
made is the change of the subroutine that checks whether a graph is
basic. A recognition algorithm for basic (theta,wheel)-free graphs is
given in the following lemma.

\begin{lemma}
  \label{l:testBasic}
  There is an $O(n^2m)$-time algorithm that decides whether an input graph is
  the line graph of a triangle-free chordless graph or a P-graph.
\end{lemma}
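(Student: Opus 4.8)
The plan is to decide the two alternatives of the disjunction essentially independently, answering ``yes'' as soon as one of them succeeds. For the first alternative I would run a standard linear-time line graph recognition procedure on $G$; if $G$ is a line graph this also reconstructs a root graph $R'$, which by Whitney's theorem is unique up to the single ambiguity $K_3=L(K_3)=L(K_{1,3})$. I then test directly that $R'$ (choosing the triangle-free root in the ambiguous case) is triangle-free and chordless, both being polynomial checks; if they succeed, $G$ lies in the first class. The rest of the work is devoted to recognizing P-graphs, for which the decisive step is to locate the special clique $K$: once $K$ is known, recognition reduces again to a line graph test on $G\setminus K$ together with a verification of the skeleton conditions~(i)--(xi).

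The key to locating $K$ is that P-graphs are diamond-free (Lemma~\ref{l3}), so after checking that $G$ is diamond-free (otherwise it is not a P-graph) I may exploit that in a diamond-free graph every neighbourhood $N(c)$ induces a disjoint union of cliques, since two non-adjacent neighbours of $c$ with a common neighbour in $N(c)$ would form a diamond. I then use two structural facts. First, every P-graph contains a long pyramid (the remark after Lemma~\ref{pyrBas}), whose apex is the centre of a claw, and by the proof of Lemma~\ref{l3} every claw-centre lies in $K$; hence the set $Z$ of claw-centres of $G$ is nonempty and contained in $K$. Second, when $k\ge 2$ the clique $K$ is maximal, because any vertex outside $K$ belongs to $L(R)$ and is adjacent to at most one $v_i$ (none if it is non-pendant, exactly one if it is pendant), so it cannot be complete to $K$. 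Fixing one claw-centre $c\in Z$, the true special clique is therefore either $\{c\}$ (the case $k=1$) or, for $k\ge 2$, a maximal clique through $c$; and the maximal cliques through $c$ are exactly the sets $\{c\}\cup C_j$, where the $C_j$ are the clique-components of $N(c)$. This produces at most $\deg(c)+1=O(n)$ candidate cliques $K$, one of which must be correct whenever $G$ is a P-graph.

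For each candidate $K$ I run the following verification. Delete $K$, test that $G\setminus K$ is a line graph and reconstruct its root $R'$, and identify the pendant vertices of $L(R')$ as its simplicial vertices; in the triangle-free graph $R'$ an edge is simplicial in the line graph exactly when it is pendant, so this is immediate. I read off the label of each pendant vertex $p$ as the unique index $i$ with $p\sim v_i$, rejecting if some pendant vertex has zero or at least two neighbours in $K$. It then remains to check that $R'$ is connected, triangle-free, chordless and satisfies~(i)--(v), that each $v_i$ is complete to its label class and has no further neighbour in $G\setminus K$, and that the labelling satisfies~(vi)--(xi). Condition~(v) is precisely Lemma~\ref{l11}, and the remaining conditions are local degree/label conditions or connectivity conditions on $R'$.

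The \textbf{main obstacle} is twofold. Structurally it is pinning down $K$, and the argument above reduces this to $O(n)$ explicit candidates, so no exponential search is needed. Algorithmically it is verifying the more global skeleton conditions fast enough, in particular condition~(iv), which quantifies over all two-vertex cutsets of $R'$: I would handle it through the block-cut tree and the triconnected (SPQR) decomposition of $R'$ rather than by enumerating pairs, so that a single verification runs in $O(nm)$ time. Computing the claw-centres and the clique-components of $N(c)$ is done once in $O(nm)$ time overall. With $O(n)$ candidate special cliques and one $O(nm)$ verification each, the total running time is $O(n^2m)$, as claimed.
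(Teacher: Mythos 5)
Your structural reduction is sound and in fact very close to the paper's: every claw centre lies in the special clique $K$ (proof of Lemma~\ref{l3}), every P-graph contains a long pyramid and hence has a claw centre, $K$ is a maximal clique when $k\geq 2$, and a candidate clique is verified by deleting it, recognizing the line graph of a triangle-free chordless graph, and checking the skeleton conditions. The genuine gap is in the running-time analysis. Because you fix a single claw centre $c$ and keep all $\Theta(n)$ maximal cliques through $c$ as candidates, you must verify each candidate in $O(nm)$ time, and in particular you must check skeleton condition~\ref{i:4} --- which quantifies over \emph{all} two-vertex cutsets of the reconstructed root graph --- within that budget. You assert this can be done ``through the block-cut tree and the triconnected (SPQR) decomposition'' but give no argument, and this is exactly the decisive step: the straightforward check of~\ref{i:4} (enumerate pairs, compute components) costs $O(n^2(n+m))$ per candidate, which is what the paper itself spends on it, so your scheme as actually justified runs in $O(n^3)$ per candidate and $O(n^4)$ overall, exceeding $O(n^2m)$ whenever $m = o(n^2)$. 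Turning condition~\ref{i:4} into an $O(nm)$ test on a graph not yet known to be a skeleton is a nontrivial algorithmic task (the number of separation pairs alone can be $\Theta(n^2)$), not a routine citation.

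The idea you are missing --- and the way the paper stays within the bound --- is that the special clique is determined essentially uniquely, so only \emph{one} verification is ever needed. Take $S$ to be the set of \emph{all} claw centres, not one of them; in a P-graph $S$ is a nonempty clique contained in $K$. Since the graph must be diamond-free, when $|S|\geq 2$ the maximal clique containing $S$ is unique and equals $K$. When $|S|=1$, say $S=\{c\}$, one checks using conditions~\ref{i:1}, \ref{i:8} and~\ref{i:11} that two pendant vertices with the same label are never adjacent, whence: if $k=1$ then $c$ lies in no triangle and $K=\{c\}$; $k=2$ forces $|S|\geq 2$ and so cannot occur; and if $k\geq 3$ then every triangle through $c$ lies inside $K$, so $K$ is the unique maximal clique of size at least $3$ containing $c$. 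This gives a single candidate in every case, after which the brute-force checks of conditions (i)--(xi), including the $O(n^2(n+m))$ check of~\ref{i:4}, fit inside $O(n^2m)$. To repair your proof, either adopt this uniqueness argument, or supply an actual $O(nm)$ algorithm certifying condition~\ref{i:4}.
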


\begin{proof}
  By Lemma 7.4 from \cite{twf-p1}, there is an $O(n^2m)$-time
  algorithm that decides whether an input graph is the line graph of a
  triangle-free chordless graph. So, it is enough to give an
  $O(n^2m)$-time algorithm that decides whether an input graph is a
  P-graph.

  First, in time $O(n^2m)$ we can find the set $S$ of all centers of
  claws in~$G$. If $S=\emptyset$, or $S$ does not induce a clique,
  then $G$ is not a P-graph. So, assume that $S$ induces a non-empty
  clique. Next, let $K$ be a maximal clique of $G$ that contains $S$, unless $|S|=1$, in which case take $K=S$ if the vertex of $S$ is not contained in a clique of size 3, or $K$ is a maximal clique of size at least 3 that contains $S$ otherwise.
  Now, let $G'$ be the graph
  obtained from $G$ by removing all vertices of $K$ (and edges
  incident with them). Using Lemma 7.4 from \cite{twf-p1} we decide
  (in time $O(n^2m)$) whether $G'$ is a line graph of a triangle-free
  chordless graph, and if it is find $R$ such that $G'=L(R)$ (if $G'$
  is not a line graph of a triangle-free chordless graph, then $G$ is
  not a P-graph). Now, we check whether $R$ is a $k$-skeleton, where
  $k=|K|$. To do this, first we find all pendant edges of $R$.  We
 name vertices of $K$ with numbers 1 to $k$, and give labels to the
 pendant edges of $R$ according to their neighbor in $K$.  We test
  whether (i), (vi), (vii), (viii) and (xi) in the definition of a
  $k$-skeleton are satisfied. Next, we check if (iii) is satisfied (in
  time $O(n(n+m))$) and then if (iv) is satisfied (in time
  $O(n^2(n+m))$). To check (v), note that an edge $e$ is contained in
  a cycle of $R$ if and only if $R\setminus e$ is connected, that is
  (v) can be check in time $O(m(n+m))=O(n^2m)$. Branches and limbs of
  $R$ can be found in time $O(n+m)$ and the number of them is
  $O(m)$. Hence, (ii) and (ix) can be checked in time
  $O(n+m+m^2)=O(n^2m)$.  Finally, for an attaching vertex $x$ of $R$
  all $x$-petals can be found in time $O(n+m)$, and hence (x) can be
  be checked in time $O(n(n+m))$.
\end{proof}

By the previous lemma, recognition of basic (theta,wheel)-free graphs can be done in the same running time as the recognition of basic only-pyramid graphs (used in \cite{twf-p1}). Hence, the recognition algorithm for (theta,wheel)-free graphs, that was explained above, has the same running time as the algorithm given in Theorem 7.6 of \cite{twf-p1}. This proves Theorem \ref{th:reco}.

As in \cite{twf-p1}, our decomposition theorem for (theta,wheel)-free graphs can be turned into
a structure theorem as follows.

Let $G_1$ be a graph that contains a clique $K$ and $G_2$ a graph
that contains the same clique $K$, and is node disjoint from $G_1$
apart from the nodes of $K$.  The graph $G_1 \cup G_2$ is the graph
obtained from $G_1$ and $G_2$ by \emph{gluing along a clique}.

Let $G_1$ be a graph that contains a path  $a_2 c_2 b_2$ such that
$c_2$ has degree 2, and such that $(V(G_1)\sm \{a_2, c_2, b_2\},
\{a_2, c_2, b_2\})$ is a consistent  almost 2-join of $G_1$ (consistent almost 2-join
is a special type of almost 2-joins -- for the definition see \cite{twf-p1}). Let
$G_2, a_1, c_1, b_1$ be defined similarly.  Let $G$ be the graph built
on $(V(G_1)\sm \{a_2, c_2, b_2\})\cup (V(G_2)\sm \{a_1, c_1, b_1\})$
by keeping all edges inherited from $G_1$ and $G_2$, and by adding all
edges between $N_{G_1}(a_2)$ and $N_{G_2}(a_1)$, and all
edges between $N_{G_1}(b_2)$ and $N_{G_2}(b_1)$.  Graph $G$ is said to be
\emph{obtained from $G_1$ and $G_2$ by consistent 2-join
  composition}.   Observe that $(V(G_1)\sm \{a_2, c_2, b_2\},
V(G_2)\sm \{a_1, c_1, b_1\})$ is a 2-join of $G$ and that $G_1$ and
$G_1$ are the blocks of decomposition of $G$ with respect to this
2-join.

Using the results from \cite{twf-p1}, it is straightforward to check the following
structure theorem. Every (theta,wheel)-free graph can be constructed as follows:
\begin{itemize}
\item Start with line graphs of triangle-free chordless graphs and P-graphs.
\item Repeatedly use consistent 2-join compositions from previously
  constructed graphs.
\item Gluing along a clique previously constructed graphs.
\end{itemize}

\end{document}